\definecolor{darkgreen}{rgb}{0,0.5,0}
\definecolor{darkblue}{rgb}{0,0,0.5}
\newcommand{\boxdiagram}{{\fontsize{4pt}{12pt}\selectfont {\yng(1)}}}
\renewcommand{\bar}{\overline}
\renewcommand{\epsilon}{\varepsilon}
\renewcommand{\phi}{\varphi}
\renewcommand{\mod}{\ \mathrm{mod}\,\,}
\newcommand{\abs}[1]{\lvert #1 \rvert}
\DeclareMathOperator{\spn}{span}          
\newcommand{\bbC}{\mathbb{C}}
\newcommand{\bbZ}{\mathbb{Z}}
\newcommand{\bbN}{\mathbb{N}}
\newcommand{\bbR}{\mathbb{R}}
\newcommand{\setC}{\mathbb{C}}
\newcommand{\SU}{\mathrm{SU}}
\newcommand{\SO}{\mathrm{SO}}
\newcommand{\isom}{\cong}
\renewcommand{\Im}{\mathrm{Im}}
\DeclareMathOperator{\GL}{GL}
\DeclareMathOperator{\U}{U}
\DeclareMathOperator{\sr}{sr}
\def\XXint#1#2#3{{\setbox0=\hbox{$#1{#2#3}{\int}$}
\vcenter{\hbox{$#2#3$}}\kern-.5\wd0}}
\newcommand{\id}{\mathrm{id}}
\newcommand{\Hom}{\mathrm{Hom}}
\newcommand{\MCG}{M}
\newcommand{\MCGrand}{\bar{M}}
\newtheorem{thm}{Theorem}[section]
\newtheorem{prop}[thm]{Proposition}
\newtheorem{cor}[thm]{Corollary}
\newtheorem{lem}[thm]{Lemma}
\newtheorem{lemma}[thm]{Lemma}
\theoremstyle{remark}
\newtheorem{remark}[thm]{Remark}
\newtheorem{rem}[thm]{Remark}
\theoremstyle{definition}
\newtheorem{definition}[thm]{Definition}
\newtheorem{example}[thm]{Example}
\newcommand{\JK}[2][\empty]{\ifthenelse{\equal{#1}{\empty}}{\todo[color=green!30]{#2}}{\todo[color=green!30,#1]{#2}}}
\newcommand{\Soeren}[2][\empty]{\ifthenelse{\equal{#1}{\empty}}{\todo[color=yellow!30]{#2}}{\todo[color=yellow!30,#1]{#2}}}
\newcommand{\jk}[2][\empty]{\ifthenelse{\equal{#1}{\empty}}{\todo[color=red!30]{#2}}{\todo[color=red!30,#1]{#2}}}
\newcommand{\diagramrep}{\eta}
\begin{document}
\title[The homological content of the Jones representations at $q = -1$]{The homological content of\\the Jones representations at $q = -1$}

\author{Jens Kristian Egsgaard}
  \address{Centre for Quantum Geometry of Moduli Spaces\\ Faculty of Science\\
  Aarhus University\\
  DK-8000 Aarhus C, Denmark}
  \email{jk@qgm.au.dk}

\author{S{\o}ren Fuglede J{\o}rgensen}
  \address{Department of Mathematics\\
  Box 480\\
  Uppsala University\\
  SE-75106 Uppsala, Sweden}
  \email{soren.fuglede.jorgensen@math.uu.se}
	
\thanks{The authors have received funding from the Danish National Research Foundation grant DNRF95 (Centre for Quantum Geometry of Moduli Spaces - QGM). S.F.J. is furthermore supported by the Swedish Research Council Grant 621--2011--3629.}

\begin{abstract}
  We generalize a discovery of Kasahara and show that the Jones representations of braid groups, when evaluated at $q = -1$, are related to the action on homology of a branched double cover of the underlying punctured disk.
	
	As an application, we prove for a large family of pseudo-Anosov mapping classes a conjecture put forward by Andersen, Masbaum, and Ueno \cite{AMU} by extending their original argument for the sphere with four marked points to our more general case.
\end{abstract}
\maketitle

\tableofcontents
\section{Introduction}

The present paper is concerned with an interpretation of the $q$-dependent two-row Jones representations of braid groups -- discovered in 1983 by Vaughan Jones \cite{Jon83} -- at $q = -1$, in terms of the action of the braid group on the homology of certain double covers of punctured disks and spheres.

Jones' discovery \cite{Jon83} that the representations may be used to define interesting invariants of links, together with Witten's discovery \cite{WitJones} of the relation between these link invariants and the Chern--Simons theory of $3$-manifolds, has served as an inspiration for several important mathematical innovations, occasionally collected under the label ``quantum topology''. Any attempt to include here a full account on these historical developments would necessarily be lacking, and we shall make no such; indeed, for the experts, the Jones representation needs no introduction.

\subsection{The Jones representation and homology}
Let $B_n$ denote the braid group on $n$ strands, let $\MCGrand(g,n)$ denote the mapping class group of a genus $g$ surface $\Sigma_g^n$ with $n$ boundary components, and let $\MCG(g,m)$ denote the mapping class group of a genus $g$ surface with $m$ punctures.

Following the notation of Wenzl \cite[Thm.~2.2]{Wen}, the Jones representation defines for every Young diagram $\lambda = (\lambda_1 \geq \lambda_2 \geq \cdots \geq \lambda_p)$ with $n$ boxes, $\sum_j \lambda_j = n$, a representation $\pi_\lambda$ of $B_n$, depending on a parameter $q$ which we shall take to be a non-zero complex number which is furthermore assumed to be $n$-regular, i.e. it is not an $l$'th root of unity for any $2 \leq l \leq n$. We shall be dealing only with two-row Young diagrams and let $\pi^{n,d}_q$ denote the representation of $B_n$ obtained from $\lambda = (\lambda_1 \geq \lambda_1 -d)$, $d \geq 0$.

Let now $n$ be given, and let $g = n-1$. There then is a homomorphism $\Psi : B_{2n} \to \MCG(g,0)$ given by mapping the standard braid generators $\sigma_1, \dots, \sigma_{2n-1}$ to the (right) Dehn twists about the curves $\gamma_0, \beta_1, \gamma_1, \dots, \beta_{g}, \gamma_{g}$ indicated in Figure~\ref{liftedmaps} respectively; for well-definedness, see e.g. \cite[Fact.~3.9 and Prop.~3.11]{FM}. Similarly, we define $\Psi : B_{2n-1} \to \MCGrand(g,1)$ by mapping $\sigma_1, \dots, \sigma_{2n-2}$ to twists about $\gamma_0, \beta_1, \gamma_1, \dots, \gamma_{g-1}, \beta_g$ respectively (Figure~\ref{liftedmapsrand}). Notice also that these homomorphisms are exactly those that appear in the Birman--Hilden theorem \cite{BH} (see also \cite[Sect.~9.4]{FM}).

\begin{figure}[h]
\begin{subfigure}[b]{\textwidth}
\centering
\begin{overpic}[scale=0.75]{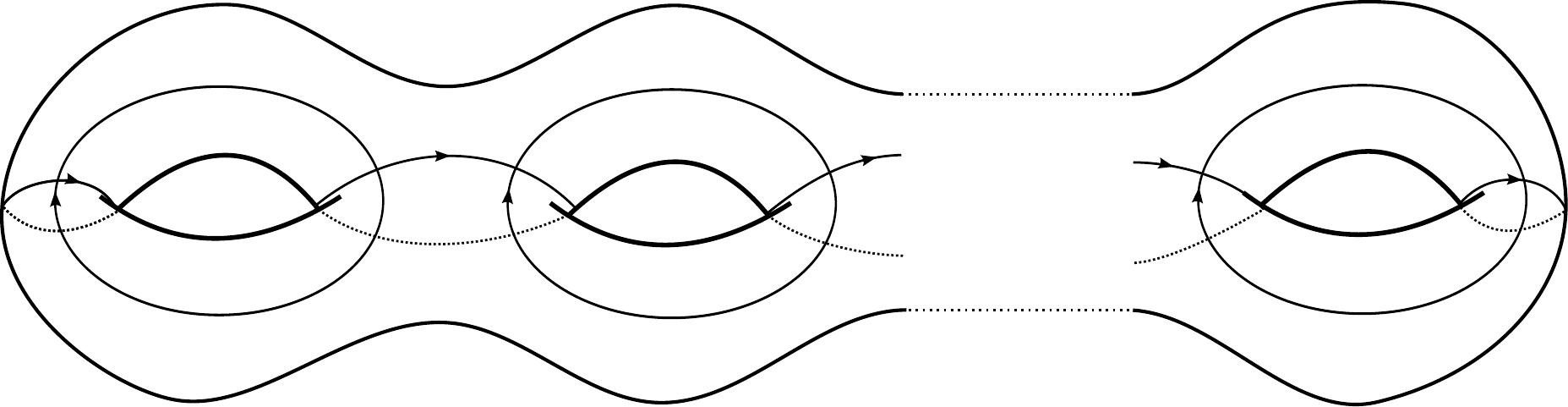}
	\put(52,88){$\beta_1$}
	\put(167,88){$\beta_2$}
	\put(344,88){$\beta_g$}
  \put(-12,50){$\gamma_0$}
	\put(110,70){$\gamma_1$}
	\put(232,70){$\gamma_2$}
	\put(280,70){$\gamma_{g-1}$}
	\put(406,51){$\gamma_{g}$}
\end{overpic}
\caption{The closed surface $\Sigma_g$.}
\label{liftedmaps}
\end{subfigure}
\\[0.5cm]
\begin{subfigure}[b]{\textwidth}
\centering
\begin{overpic}[scale=0.75]{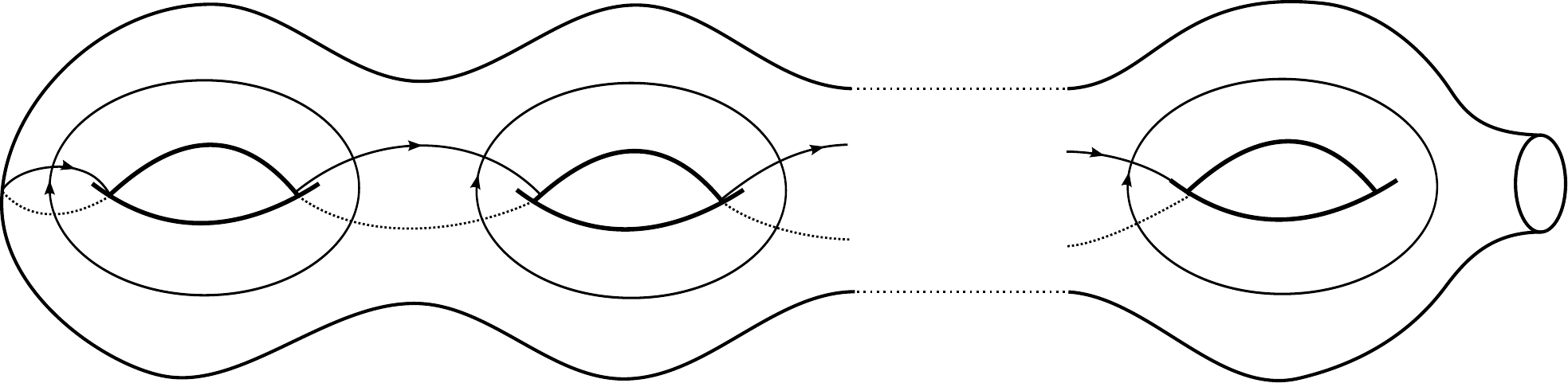}
	\put(52,88){$\beta_1$}
	\put(167,88){$\beta_2$}
	\put(344,88){$\beta_g$}
  \put(-12,50){$\gamma_0$}
	\put(110,70){$\gamma_1$}
	\put(232,70){$\gamma_2$}
	\put(280,70){$\gamma_{g-1}$}
\end{overpic}
\caption{The surface $\Sigma_g^1$.}
\label{liftedmapsrand}
\end{subfigure}
\\[0.5cm]
\begin{subfigure}[b]{\textwidth}
\centering
\begin{overpic}[scale=0.75]{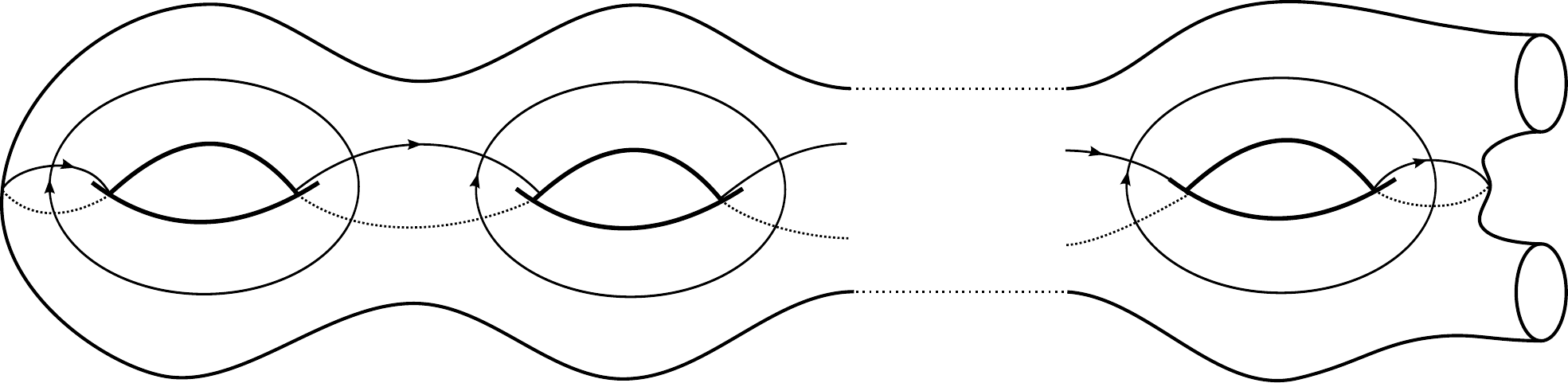}
	\put(52,88){$\beta_1$}
	\put(167,88){$\beta_2$}
	\put(345,88){$\beta_g$}
  \put(-12,50){$\gamma_0$}
	\put(110,70){$\gamma_1$}
	\put(232,70){$\gamma_2$}
	\put(280,70){$\gamma_{g-1}$}
	\put(410,54){$\gamma_{g}$}
\end{overpic}
\caption{The surface $\Sigma_g^2$.}
\label{liftedmapsrande}
\end{subfigure}
\caption{Our naming convention for curves and homology cycles on a surface.}
\label{flader}
\end{figure}

Now, $\MCG(g,0)$ and $\MCGrand(g,1)$ act on the corresponding first homology groups by symplectomorphisms with respect to the intersection pairing $\omega$. For $m = 0,1$ and $l \geq 1$, we let
\begin{align*}
	\tilde{\rho}^{g,l}_{\hom} : \MCGrand(g,m) \to \GL(\Lambda^l H_1(\Sigma_g^m,\bbC) / (\omega \wedge \Lambda^{l-2} H_1(\Sigma_g^m,\bbC)))
\end{align*}
denote the induced action, i.e. for $\phi \in \MCGrand(g,m)$ and $v_1, \dots, v_l \in H_1(\Sigma_g^m,\bbC)$,
\begin{align*}
	\tilde{\rho}_{\hom}^{g,l}(\phi) [v_1 \wedge \dots \wedge v_l] = [(\phi_*)v_1 \wedge \dots \wedge (\phi_*)v_l],
\end{align*}
where we use the conventions that $\Lambda^{-1} H_1(\Sigma_g^m,\bbC) = \{0\}$ and $\Lambda^{0} H_1(\Sigma_g^m,\bbC) = \bbC$. Finally, let $\rho_{\hom}^{g,l} = \tilde{\rho}_{\hom}^{g,l} \circ \Psi$ denote the corresponding braid group representations.

The study of the Jones representation as contained in this paper was initiated by an attempt to generalize the results of \cite{AMU} to general punctured spheres. Here, the authors show how to relate the representations of the mapping class group of a four times punctured sphere, obtained from the level $k$ quantum representations -- closely related to specializations of the parameter $q$ of the Jones representations to certain roots of unity -- to an action on the homology of a torus by considering the limit $k \to \infty$.

In \cite[Sect.~10]{JonHecke}, Jones gave explicit matrices for the representation associated to ${\fontsize{4pt}{12pt}\selectfont {\yng(2,2,2)}}$ (closely related to $\pi_q^{6,0}$, the representation associated to ${\fontsize{4pt}{12pt}\selectfont {\yng(3,3)}}$). Moreover, the choice of basis is such that all matrix entries are in $\bbZ[q,q^{-1}]$, so that one obtains this way representations of $B_6$ for all non-zero values of $q$ rather than only the $6$-regular ones. Kasahara \cite[Lem.~2.1]{Kas} observed in the same vein as above that at $q = -1$, the resulting representation is equivalent to the representation $\rho_{\hom}^{2,2}$. In general, the representation space of $\pi_q^{2n,0}$ has dimension $C_{n+1}$, the $(n+1)$'st Catalan number. Inspired by Kasahara's result and noting the relation
\begin{align*}
  C_{n+1} = \binom{2n}{n}-\binom{2n}{n-2},
\end{align*}
one arrives at the following, which we will prove in Section~\ref{mainthmproof}.
\begin{thm}
	\label{mainthm}
  The Jones representation $\pi_{q}^{2n,0}$ has a natural extension to $q = -1$, for which it is equivalent to $\rho_{\hom}^{g,g}$, where $g = n-1$.
\end{thm}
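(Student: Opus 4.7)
The plan is to prove the theorem directly, in the spirit of Kasahara's verification of the case $n = 3$ in \cite{Kas}: fix convenient bases on both sides of the claimed equivalence, and construct an explicit intertwiner. As a preliminary sanity check, the dimensions of the two representations agree by the identity noted in the introduction, so the theorem is at least numerically plausible.

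First, I would put the Jones representation in a form that makes specialization at $q = -1$ meaningful. Following Wenzl (and Jones' explicit formulas of \cite[Sect.~10]{JonHecke} for the $n = 3$ instance), one realizes $\pi_q^{2n, 0}$ on a free $\bbZ[q, q^{-1}]$-module with basis indexed by standard Young tableaux of shape $(n, n)$ (equivalently, non-crossing matchings), with each $\sigma_i$ acting by a matrix whose entries are polynomials in $q^{\pm 1}$. Evaluating at $q = -1$ then provides the natural extension appearing in the statement. On the homological side, I would use the basis of $H_1(\Sigma_g, \bbC)$ determined by the cycles $\gamma_0, \beta_1, \gamma_1, \ldots, \beta_g, \gamma_g$ of Figure~\ref{liftedmaps} (subject to the single linear relation among the $\gamma_i$), extract a basis of $\Lambda^g H_1(\Sigma_g, \bbC)/(\omega \wedge \Lambda^{g-2} H_1(\Sigma_g, \bbC))$ by taking $g$-element wedge products modulo the $\omega$-subspace, and compute the matrix of each $\rho_{\hom}^{g,g}(\sigma_i)$ via $\Psi$: since $\Psi(\sigma_i)$ is a Dehn twist about a specific curve $c$, it acts on $H_1$ by the transvection $v \mapsto v + \omega(v, c)\, c$ and induces the corresponding action on $\Lambda^g H_1$.

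Given these two matrix descriptions, the remaining task is to produce a combinatorial bijection between $(n,n)$-tableaux and the chosen homological basis, generalizing Kasahara's identification from $n = 3$ to arbitrary $n$, and to verify that the resulting change-of-basis map intertwines each $\sigma_i$. By the definition of $\Psi$, generators of odd index correspond to twists about $\gamma$-curves and those of even index to twists about $\beta$-curves, which splits the verification into two structurally analogous cases. The main obstacle is precisely this last step: exhibiting the bijection uniformly in $n$ and checking intertwining for all generators. A natural approach is to proceed by induction on $n$, building the bijection one box at a time on the tableau side and one handle at a time on the surface side, so that the verification reduces to a local check at each inductive step. An alternative path around the explicit matrix matching would be to establish that the specialized $\pi^{2n, 0}_{q=-1}$ is irreducible as a $B_{2n}$-representation, noting that $\rho_{\hom}^{g,g}$ is (at least as an $\Sp(2g)$-module) the $g$-th fundamental representation and hence irreducible, and then invoke Schur's lemma after matching traces on one generator; but since irreducibility at $q = -1$ need not be automatic, I expect the concrete inductive construction of the intertwiner to be the cleanest route.
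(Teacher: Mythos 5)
Your setup is the right one as far as it goes: the paper also obtains the extension to $q=-1$ by passing to the diagrammatic (non-crossing matching) basis, where the braid generators act by matrices over $\bbZ[A,A^{-1}]$, and the homological side is indeed computed from the transvections $v\mapsto v+\omega(v,c)c$. But the proposal stops short of the actual content of the theorem at the decisive step. The intertwiner is \emph{not} a bijection of bases, and producing it is the whole difficulty. The paper's map sends a basis diagram $D$ to $f(D)\bigwedge_{e}X_e$, where to each arc $e$ joining top points $e_0<e_1$ one associates the \emph{sum} $X_e=c_{e_0}+c_{e_0+1}+\cdots+c_{e_1-1}$ of the curve classes the arc crosses, the wedge is taken in the order of left endpoints, and $f(D)$ is a diagram-dependent power of $-i$ recording nesting data of the arcs. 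These images are not wedge monomials in any standard homology basis, and the fourth-root-of-unity prefactor is exactly what makes the map equivariant; verifying equivariance is a multi-case computation with delicate sign bookkeeping, not a local inductive check. Moreover, the even case you are proving needs an extra device: one first deletes the arc ending at the last point, passes to a diagram with one endpoint ``at infinity'' (i.e.\ to $V^{2n-1,1}$), and the equivariance under the last generator then uses the relation $c_{2n-1}=-\sum_{k\ \mathrm{odd},\,k\leq 2n-3}c_k$, which holds only in the closed surface. The sentence ``produce a combinatorial bijection and verify intertwining'' is, in effect, a restatement of the theorem rather than a proof of it.

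Your treatment of bijectivity also points in the wrong direction. The paper proves bijectivity precisely by an irreducibility argument — but irreducibility of the \emph{source}, not the target: the kernel of the intertwiner (composed with the quotient by $\omega\wedge\Lambda^{g-2}$) is a submodule for the Temperley--Lieb algebra at loop parameter $0$, and the standard modules $V^{n,d}$ at $\beta=0$ are known to be irreducible (Ridout--Saint-Aubin), so the kernel vanishes and the dimension count finishes the job. So the fact you worried ``need not be automatic'' is in fact available from the literature and is what makes the argument work. By contrast, your proposed alternative — irreducibility of both sides plus ``matching traces on one generator'' and Schur's lemma — does not close: Schur's lemma presupposes a nonzero intertwiner, which is exactly what has to be constructed, and agreement of traces on a single generator does not identify two irreducible representations of an infinite group.
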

As it is well-known that $\pi_q^{2n-1,1}$ is equivalent to $\pi_q^{2n,0}|_{B_{2n-1}}$, the above Theorem allows us to deduce a similar homological description for $\pi_q^{2n-1,1}$.

On the other hand, $\pi_q^{2n-1,2n-3}$ is nothing but the reduced Burau representation, which at $q = -1$ is known to be equivalent to the representation $\rho_{\hom}^{g,1}$ on $H_1(\Sigma_g^1,\bbC)$, $g = n-1$. This suggests that the intermediary representations $\pi_q^{2n-1,d}$ interpolate at $q = -1$ between appropriate exterior powers of homological representations, and the following result shows that this is indeed the case.
\begin{thm}\label{thmuligen}
  The representation $\pi^{2n+1,d}_q$ has a natural extension to $q = -1$ for which it is equivalent to the action $\rho_{\hom}^{g,l}$, where $g = n$ and $2l = 2n+1-d$.
\end{thm}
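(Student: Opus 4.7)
The plan is to identify $\pi_q^{2n+1, d}$ at $q = -1$ with the Lefschetz primitive quotient $\rho_{\hom}^{n, l}$ of $\Lambda^l H_1(\Sigma_n^1, \bbC)$ by combining Theorem~\ref{mainthm} with the classical branching rules for two-row Jones representations and the Lefschetz $\fraksl_2$-decomposition on the homological side.

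As a first step, I would construct the natural extension of $\pi_q^{2n+1, d}$ to $q = -1$ in the same manner as in the proof of Theorem~\ref{mainthm}: selecting a cellular (Temperley--Lieb) basis over $\bbZ[q, q^{-1}]$ in which the generators act by Laurent-polynomial matrices, one specializes at $q = -1$, obtaining a well-defined representation whose equivalence class does not depend on the choice of basis. A Pascal-identity calculation confirms the numerical match $\dim \pi_q^{2n+1, d} = \binom{2n+1}{l} - \binom{2n+1}{l-1} = \binom{2n}{l} - \binom{2n}{l-2} = \dim \rho_{\hom}^{n, l}$.

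On the homology side, the symplectic form $\omega$ on $H_1(\Sigma_n^1, \bbC)$ induces a Lefschetz $\fraksl_2$-action on $\Lambda^\bullet H_1$ that commutes with the $\MCGrand(n, 1)$-action, yielding the primitive decomposition
\[
\Lambda^\bullet H_1(\Sigma_n^1, \bbC) \cong \bigoplus_{l=0}^n V_{n-l+1} \otimes P^l, \qquad P^l = \rho_{\hom}^{n, l}.
\]
The identification with the Jones side is already in place at the two extremes of $l$: at $l = n$ (i.e.\ $d = 1$) via the branching $\pi_q^{2n+2, 0}|_{B_{2n+1}} \cong \pi_q^{2n+1, 1}$ combined with Theorem~\ref{mainthm} and the fact that the capping-off map $\MCGrand(n, 1) \to \MCG(n, 0)$ intertwines the two $\Psi$-homomorphisms and induces an isomorphism on first homology; and at $l = 1$ (i.e.\ $d = 2n - 1$) via the classical reduced-Burau identification mentioned in the introduction. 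For the intermediate $1 < l < n$, I would run a downward induction on $l$ using the Jones branching
\[
\pi_q^{2n+2, d-1}\big|_{B_{2n+1}} \cong \pi_q^{2n+1, d-2} \oplus \pi_q^{2n+1, d}
\]
coupled with the corresponding homological restriction to $B_{2n+1}$. To close the induction, one also needs an analogous identification for $\pi_q^{2n+2, d-1}|_{q = -1}$ in the even parity, carried out in parallel by a mutual induction seeded by Theorem~\ref{mainthm}.

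The main obstacle is the compatibility of these two branching structures: one must verify, in the integral basis from the first step, that the direct-sum decomposition arising from the Jones restriction matches termwise with the decomposition of the restricted homological representation induced by the subgroup inclusion. Concretely, this amounts to comparing the action of the Temperley--Lieb idempotents at $q = -1$ with the contraction and wedging operators $\iota_\omega$ and $\omega \wedge$ on the exterior algebra, a calculation of the same type as in the proof of Theorem~\ref{mainthm}.
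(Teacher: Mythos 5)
Your overall framework (integral Temperley--Lieb basis to define the extension at $q=-1$, dimension count, Lefschetz primitive decomposition on the homology side) is consistent with the paper's setup, and your two seeds -- $\pi_q^{2n+2,0}|_{B_{2n+1}}\cong\pi_q^{2n+1,1}$ combined with Theorem~\ref{mainthm}, and reduced Burau at $d=2n-1$ -- are exactly the endpoint cases the paper also points to. But the inductive engine has a genuine gap: the branching rule $\pi_q^{2n+2,d-1}|_{B_{2n+1}}\cong\pi_q^{2n+1,d-2}\oplus\pi_q^{2n+1,d}$ is a statement about generic (regular) $q$, where $\TL_{2n+2}$ is semisimple. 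At the specialization $q=-1$ the restriction is only block \emph{triangular} in the diagram basis -- the paper itself exhibits this in the proof of Theorem~\ref{thmligengenyoung} and again in Proposition~\ref{uendeligordenprop}, where the decomposition $V^{n+1,d+1}=V^{n,d}\oplus V^{n,d+2}$ is a vector-space splitting that the braid action does not preserve. Knowing a representation only through a filtration with prescribed subquotients does not identify it, so the inductive step cannot be closed; worse, the mutual induction you invoke needs the even-parity identification (Theorem~\ref{thmligengenyoung}) as input, whereas in the paper that result is a \emph{consequence} of Theorem~\ref{thmuligen}, obtained precisely from the block-triangular form. To make your route work you would have to prove semisimplicity of the restricted representation at $q=-1$ and match the summands termwise, which is essentially as hard as the theorem itself.

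The paper avoids branching altogether: it writes down an explicit intertwiner $\varphi:V^{2n+1,d}\to\Lambda^l H_1(\Sigma_n^1,\bbC)$ sending a noncrossing diagram $D$ to a signed wedge product $f(D)\bigwedge_{e\in D_0}X_e$ of homology classes attached to its arcs, verifies the equivariance relation $\varphi(D+iT_iD)=t_{c_i}\varphi(D)$ by a case check on how arcs meet the strands $i,i+1$ (Lemma~\ref{homomofilemma}), and then deduces injectivity of the composite into the quotient by $\omega\wedge\Lambda^{l-2}H_1$ from the irreducibility of $V^{2n+1,d}$ as a $\TL_{2n+1}(\exp(-\pi i/4))$-module (citing Ridout--Saint-Aubin) together with the dimension count you also carried out. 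Some such explicit construction, or an irreducibility input of this kind, is the missing ingredient in your argument.
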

This leaves us with the two-row diagrams having an even number of boxes. The fact that $\pi^{n,d}_q|_{B_{n-1}} \isom \pi^{n-1,d-1}_q \oplus \pi^{n-1,d+1}_q$ (see e.g. \cite{JonHecke}), together with the observation that
\begin{align*}
	\binom{2g}{l} - \binom{2g}{l-2} + \binom{2g}{l+1} - \binom{2g}{l-1} = \binom{2g+1}{l+1} - \binom{2g+1}{l-1}
\end{align*}
for all $g$ and $l$, leads us to the following: let $\hat{\rho}_{\hom}^{g,l}$ denote the action of $B_{2n}$ on $\Lambda^l H_1(\Sigma_{g}^2,\bbC)$, where $g = n-1$, given by mapping $\sigma_1,\dots,\sigma_{2n-1}$ to the action induced by the homological action of the Dehn twists $t_{\gamma_0}, t_{\beta_1}, t_{\gamma_1}, \dots, t_{\beta_{g}},t_{\gamma_g}$ respectively (see Figure~\ref{liftedmapsrande}).
\begin{thm}\label{thmligengenyoung}
  The representation $\pi^{2n,d}_q$ has a natural extension to $q = -1$ for which it is equivalent to a subrepresentation of $\hat{\rho}_{\hom}^{g,l}$, where $g = n-1$ and $2l = 2n-d$.
\end{thm}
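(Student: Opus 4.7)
The plan is to parallel the proof of Theorem~\ref{thmuligen} and reduce Theorem~\ref{thmligengenyoung} to it via the Jones branching rule
\[
\pi^{n,d}_q\big|_{B_{n-1}}\cong \pi^{n-1,d-1}_q\oplus\pi^{n-1,d+1}_q.
\]
As in Theorems~\ref{mainthm} and~\ref{thmuligen}, the natural extension of $\pi^{2n,d}_q$ to $q=-1$ is defined via Jones' path-basis matrices, whose entries lie in $\bbZ[q,q^{-1}]$. Restricting to the subgroup $B_{2n-1}\subset B_{2n}$ generated by $\sigma_1,\dots,\sigma_{2n-2}$ and applying Theorem~\ref{thmuligen} at $q=-1$, one obtains
\[
\pi^{2n,d}_{-1}\big|_{B_{2n-1}}\cong \rho_{\hom}^{g,l}\oplus \rho_{\hom}^{g,l-1},\qquad g=n-1,\ l=n-d/2.
\]

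To realize this inside $\hat\rho_{\hom}^{g,l}$, I would use the special structure of $H_1(\Sigma_g^2,\bbC)$: the intersection form has a one-dimensional radical spanned by a boundary class $\delta$ fixed by the entire $B_{2n}$-action, and the canonical quotient $H_1(\Sigma_g^2,\bbC)/\bbC\delta$ is $H_1(\Sigma_g^1,\bbC)$ with its standard symplectic form. Consequently, wedging with $\delta$ descends to a canonical $B_{2n}$-equivariant injection $\Lambda^{l-1}H_1(\Sigma_g^1,\bbC)\hookrightarrow \Lambda^l H_1(\Sigma_g^2,\bbC)$, and in particular the primitive subspace $P^{l-1}\subset \Lambda^{l-1}H_1(\Sigma_g^1,\bbC)$ embeds as a $B_{2n}$-invariant subspace $\delta\wedge P^{l-1}\cong\rho_{\hom}^{g,l-1}$. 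For the other summand, I would select a $B_{2n-1}$-invariant lift $\tilde P^l\subset\Lambda^l H_1(\Sigma_g^2,\bbC)$ of the primitive subspace $P^l\subset\Lambda^l H_1(\Sigma_g^1,\bbC)$, using semisimplicity of the $B_{2n-1}$-action on $\Lambda^l H_1(\Sigma_g^1,\bbC)$ (which factors through the symplectic group). The candidate is then $W := \tilde P^l + \delta\wedge P^{l-1}\subset \Lambda^l H_1(\Sigma_g^2,\bbC)$, a $B_{2n-1}$-invariant subspace of the correct dimension $\binom{2g+1}{l}-\binom{2g+1}{l-2}$.

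The main technical obstacle is to verify that $W$ is also stable under $\sigma_{2n-1}\mapsto t_{\gamma_g}$ and that the resulting $B_{2n}$-representation on $W$ matches $\pi^{2n,d}_{-1}$; note that $W$ need not split as a sum of $B_{2n}$-modules. Fix a complement $V$ to $\bbC\delta$ containing the chain classes $[\gamma_0],[\beta_1],\dots,[\beta_g]$, write $[\gamma_g]=\bar\gamma+c\delta$ with $\bar\gamma\in V$, and apply the transvection formula $t_{\gamma_g}(\eta)=\eta+\iota_{[\gamma_g]^\vee}(\eta)\wedge[\gamma_g]$: this splits into a symplectic transvection $T_{\bar\gamma}$ on $V$, which preserves $P^l$ by $\Sp$-equivariance of the Lefschetz decomposition, and a "unipotent" correction valued in $\delta\wedge\Lambda^{l-1}V$, proportional to $\iota_{\bar\gamma^\vee}(\eta)\wedge\delta$. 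The crux is choosing $\tilde P^l$ so that this correction lands in $\delta\wedge P^{l-1}\subset W$, which requires a careful analysis of the interaction between vector contraction $\iota_{\bar\gamma^\vee}$ and the Lefschetz decomposition on $\Lambda^\bullet V$. Once $B_{2n}$-invariance is established, matching the $B_{2n}$-action on $W$ with $\pi^{2n,d}_{-1}$ reduces to comparing a single extension class in $\Ext^1_{B_{2n}}(\rho_{\hom}^{g,l},\rho_{\hom}^{g,l-1})$, which can be verified by direct comparison with Jones' explicit path-basis matrices for $\sigma_{2n-1}$ in analogy with the corresponding step in the proof of Theorem~\ref{thmuligen}.
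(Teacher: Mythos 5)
Your high-level reduction -- restrict to $B_{2n-1}$, use the branching $\pi^{2n,d}_q|_{B_{2n-1}}\cong\pi^{2n-1,d-1}_q\oplus\pi^{2n-1,d+1}_q$, and invoke Theorem~\ref{thmuligen} for each summand -- is indeed the engine of the paper's proof. But your execution has a genuine gap at exactly the point you flag as the ``main technical obstacle'': you never actually verify that your candidate subspace $W=\tilde P^l+\delta\wedge P^{l-1}$ is stable under $\sigma_{2n-1}\mapsto t_{\gamma_g}$, nor that the resulting extension of $\rho_{\hom}^{g,l}$ by $\rho_{\hom}^{g,l-1}$ agrees with $\pi^{2n,d}_{-1}$. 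Since the action of the last generator is the only content of the theorem beyond the odd-strand case, deferring it to ``a careful analysis of the interaction between contraction and the Lefschetz decomposition'' and then to ``comparing a single extension class in $\Ext^1_{B_{2n}}(\rho_{\hom}^{g,l},\rho_{\hom}^{g,l-1})$'' leaves the proof incomplete. The $\Ext^1$ step is moreover unjustified as stated: you would need to know that this group is at most one-dimensional (or otherwise that a nonzero class determines the module up to isomorphism), and you give no reason for either; a priori two non-split extensions of the same pair of irreducibles need not be isomorphic as $B_{2n}$-modules.

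The paper avoids all of this because the intertwiner is not built abstractly from the two $B_{2n-1}$-summands: the map $\varphi:V^{2n,d}\to\Lambda^l H_1(\Sigma_g^2,\bbC)$ of Definition~\ref{homomorfidef} is already defined on the diagram basis and is $B_{2n}$-equivariant for \emph{all} generators by Lemma~\ref{homomofilemma}, since the $2n-1$ chain curves $\gamma_0,\beta_1,\dots,\beta_g,\gamma_g$ on $\Sigma_g^2$ satisfy the hypotheses of that lemma (consecutive curves meet once, the rest are disjoint, and they are linearly independent in the $(2g+1)$-dimensional space $H_1(\Sigma_g^2,\bbC)$). What remains is only injectivity, and this is where the branching enters: splitting the diagram basis by whether the last point is joined to $\infty$ gives maps $h_1:V^{2n,d}\to V^{2n-1,d-1}$ and $h_2:V^{2n,d}\to V^{2n-1,d+1}$, with respect to which $\varphi$ is block triangular with diagonal blocks $\varphi^{2n-1,d-1}\circ h_1$ and (a multiple of) $c_{2n-1}\wedge\varphi^{2n-1,d+1}\circ h_2$; both are injective by Theorem~\ref{thmuligen}. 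I would encourage you to reorganize your argument along these lines: prove equivariance once, uniformly in the chain of curves, and let the branching do only the linear-algebraic work of injectivity, rather than trying to reassemble the $B_{2n}$-action from its restriction.
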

In each of the three cases, the appropriate intertwining operator is constructed as follows: a natural basis for the representation spaces of $\pi^{n,d}_q$ is given in terms of non-intersecting paths in the relevant punctured disc, connecting the punctures. Regarding the punctures as marked points, the disk is realized as the quotient of a surface by the order two element rotating the surface by $\pi$ along its horizontal axis in Figure~\ref{flader}, allowing us to realize the surface as a well-understood branched double cover. Lifting the non-intersecting paths through the double cover defines a collection of loops in the covering surface, and by taking an appropriately ordered and scaled wedge product of the homology classes of these loops, we obtain our desired linear map.

As the insightful reader has no doubt pondered, the exact choice of curves in the double cover whose twists we have chosen to lift the $B_n$-action to is of little importance, and for Theorems~\ref{thmuligen} and \ref{thmligengenyoung} any linearly independent set of appropriately intersecting curves would do the job. A similar statement is true for Theorem~\ref{mainthm}; the exact conditions will be made precise in Section~\ref{mainthmproof}.

\subsection{Quantum representations}
\label{quantumrepsintro}
\newcommand{\qMCG}{\widetilde M}
Let $\Lambda_{N,k}$ be the set of Young diagrams of height strictly less than $N$ and length at most $k$. This set comes with an involution $^\dagger$, which assigns to a Young diagram $(\lambda_1 \geq \cdots \geq \lambda_p)$ the result of taking the complement of $\lambda$ in the Young diagram consisting of $N$ rows, each containing $\lambda_1$ boxes, and rotating the result 180 degrees (see \cite[p.~206]{Bla}).

Let $\Sigma$ be a surface and $P$ a finite set of pairs $(p,v)$ where $p\in \Sigma$ and $v\in (T_p\Sigma \setminus \{0\})/ \bbR_+$, and let $\lambda  : P \to \Lambda_{N,k}$ be a labelling of $P$. Let $\qMCG(\Sigma,P,\lambda)$ be the set of isotopy classes of diffeomorphisms of $\Sigma$ preserving $P$ and respecting the labelling. The \emph{level $k$ quantum $SU(N)$-representation} assigns to $(\Sigma, P,\lambda)$ a finite dimensional vector space $V_{N,k}(\Sigma, P, \lambda)$ and a projective representation $\rho_{N,k}$ of $ \qMCG(\Sigma, P, \lambda)$ on $V_{N,k}$.

These quantum representations, arising originally from the mathematical realizations of Witten's 2+1 dimensional topological quantum field theory constructed from Chern--Simons theory \cite{WitJones}, have been studied from a number of different perspectives, including that of the representation theory of quantum groups \cite{RT1}, \cite{RT2}, \cite{Tu}, the skein theory of the Jones polynomial \cite{BHMV1}, \cite{BHMV2}, \cite{Bla}, conformal field theory \cite{TUY}, \cite{UenoBook}, \cite{KanieCFTandBG}, \cite{AU2}, and the geometric quantization of moduli spaces of flat connections \cite{Hit}, \cite{asympgeom}. For a complete description of the equivalence between these constructions, we refer to \cite{AU1}, \cite{AU2}, \cite{AU3} and \cite{AU4}. While useful to have in mind, the present work is independent of this equivalence.

Indeed, for our purposes, it will be convenient to take as our definition of quantum representations those that are obtained from the general procedure, described in \cite[Sect.~IV.5]{Tu}, which defines quantum representations from modular functors, which in turn we apply to the modular functor arising from conformal field theory, constructed in \cite{AU2}. The label set of this theory is a certain $k$-dependent set of dominant integral $\mathfrak{sl}(N,\bbC)$-weights which is in natural correspondence with the set $\Lambda_{N,k}$ defined above (see \cite[Lemma~7.1]{AU4}).

In \cite[Sects.~IV.4--9]{Tu}, the projective ambiguity of the quantum representations is discussed in detail. We record here the fact that if $\Sigma$ has genus 0, the quantum representations define not just projective representations, but actual representations. To be concrete, we consider $\Sigma = S^2 = \bbC \cup \{\infty\}$, and let the set $P$ consist of the points $\{1,\dots,n,\infty\}$, all framed along the real axis. Let $d \in \{0,\dots,n-1\}$ have the same parity as $n$, and label the $n+1$ points by $\boxdiagram,\dots,\boxdiagram,\lambda^\dagger$, where $\lambda = ((n+d)/2 \geq (n-d)/2)$ if $N > 2$ and $\lambda = (d)$ if $N =2$. Assume that $k$ is large enough that $\lambda \in \Lambda_{N,k}$. We write $\qMCG(0,n)_\infty$ for the subgroup of $\qMCG(\Sigma,P,(\boxdiagram,\dots,\boxdiagram,\lambda^\dagger))$ consisting of classes of diffeomorphisms fixing $\infty$ and its framing. Moreover, we introduce $\qMCG(0,n) = \qMCG(\Sigma,\tilde{P},(\boxdiagram,\dots,\boxdiagram))$, where $\tilde{P} = P \setminus \{\infty\}$.

Now, $\qMCG(0,n)_\infty$ is the ribbon braid group on $n$ ribbons, which contains the ordinary braid group $B_n$ as a subgroup, by assigning the blackboard framing to a braid. We will denote by $\rho_{N,k}^\lambda$ the restriction of the quantum representation of $\qMCG(0,n)_\infty$ to this $B_n$, and will think of $B_n$ as $\MCG(0,n)_\infty$, the mapping class group of a sphere with $n$ marked points and an extra marked point called $\infty$ which also carries a preserved projective tangent vector. Finally, we denote by $\rho_{N,k}$ the quantum representation of $\qMCG(0,n)$.

\subsection{The AMU conjecture}
The AMU conjecture \cite[Conjecture~2.4]{AMU} is concerned with the images of pseudo-Anosovs in the quantum $\SU(N)$-representations of mapping class groups for large values of $k$. As mentioned above, in \cite{AMU} the authors prove their conjecture for pseudo-Anosovs in $\MCG(0,4)$ by relating a limit of quantum representation to homology, and it is this argument that we seek to generalize in the present paper. With Theorem~\ref{AMUulige} below, we provide a large family of mapping classes for which the conjecture holds; previously, the conjecture has only been studied for $\MCG(0,4)$ in \cite{AMU}, and for $\MCG(1,1)$ in the closely related quantum $\SO(3)$-representations in unpublished work by Masbaum \cite[Rem.~5.9]{AMU}, \cite{MasOW}, and in \cite{San}.

More precisely, we apply Theorem~\ref{mainthm} to prove the AMU conjecture for those pseudo-Anosov mapping classes whose dynamics are described in homological terms; one particular family for which this is the case are the ones we will refer to as homological pseudo-Anosovs. Here, we say that $\phi \in \MCG(0,n)_\infty$ is a \emph{homological pseudo-Anosov} if its image in $\MCG(0,n+1)$ -- under the map forgetting the tangent vector at $\infty$ -- admits invariant transverse measured singular foliations having only even-pronged non-puncture singularities and only odd-pronged puncture singularities away from $\infty$. Note that we will systematically make the common abuse of notation and think of $\phi$ as both a mapping class and a pseudo-Anosov representing it.

\begin{thm}
	\label{AMUulige}
	Let $\phi\in \MCG(0,n)_\infty$ be a homological pseudo-Anosov, and let $\lambda$ be as in Section~\ref{quantumrepsintro}. Then $\rho_{N,k}^\lambda(\phi)$ has infinite order for all but finitely many $k$.  
\end{thm}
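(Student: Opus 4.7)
The plan is to argue by contradiction, adapting the strategy of \cite{AMU} from its original setting of $\MCG(0,4)$ to the present, broader one. Suppose that $\rho_{N,k}^\lambda(\phi)$ has finite order for infinitely many values of $k$; the goal is then to show that $\rho_{\hom}(\phi)$ has all eigenvalues on the unit circle, contradicting the consequence of the homological pseudo-Anosov hypothesis that its spectral radius is strictly greater than $1$.

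The first step is to pass from the quantum representation to the $q = -1$ specialization of the Jones representation via Galois conjugation. Up to multiplication by a central scalar that is itself a root of unity, the restriction $\rho_{N,k}^\lambda|_{B_n}$ equals $\pi_{q_k}^{n,d}$ evaluated at $q_k = e^{i\pi/(N+k)}$, and the matrix entries of $\pi_q^{n,d}$ in the natural diagrammatic basis are Laurent polynomials in $q$ with integer coefficients. I would fix a Galois automorphism $\sigma_k$ of $\bbQ(q_k)$ chosen so that $\sigma_k(q_k) \to -1$ along an infinite subsequence of $k$'s -- the choice $\sigma_k \colon q_k \mapsto q_k^{N+k-1}$, valid when $\gcd(N+k-1, 2(N+k)) = 1$ and in particular whenever $N+k$ is even, yields $\sigma_k(q_k) = e^{i\pi(N+k-1)/(N+k)} \to -1$. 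The Galois-conjugated matrix $\sigma_k(\rho_{N,k}^\lambda(\phi))$ then equals $\pi_{\sigma_k(q_k)}^{n,d}(\phi)$ up to a unit-modulus central scalar, and, after passing to a subsequence along which this scalar converges, tends as $k \to \infty$ to a scalar multiple of $\pi_{-1}^{n,d}(\phi)$. By Theorem~\ref{mainthm}, Theorem~\ref{thmuligen}, or Theorem~\ref{thmligengenyoung} -- depending on the parities of $n$ and $d$ -- this limit is conjugate to $\rho_{\hom}(\phi)$ (or contains it as a subrepresentation, in the setting of Theorem~\ref{thmligengenyoung}), up to an overall scalar of modulus one. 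Because Galois automorphisms preserve finite order and send roots of unity to roots of unity, every eigenvalue of $\sigma_k(\rho_{N,k}^\lambda(\phi))$ lies on the unit circle for $k$ in the subsequence, and by continuity of the roots of the characteristic polynomial all eigenvalues of $\rho_{\hom}(\phi)$ would then have to lie on the unit circle as well.

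The main obstacle is to show that the homological pseudo-Anosov hypothesis forces $\rho_{\hom}(\phi)$ to possess an eigenvalue of modulus strictly greater than one, contradicting the conclusion above. The key point is that the prong parity conditions in the definition of a homological pseudo-Anosov are tailored to the branched double cover $\Sigma_g \to S^2$ branched at the punctures, which is precisely the cover used to construct $\rho_{\hom}$: each odd-pronged puncture singularity of the invariant foliations of $\phi$ lifts to an even-pronged interior singularity (or becomes a regular point, in the $1$-pronged case), while each even-pronged non-puncture singularity lifts to two even-pronged copies. The lift $\tilde\phi$ is therefore a pseudo-Anosov of $\Sigma_g$ whose invariant foliations are orientable, with dilatation equal to $\lambda(\phi) > 1$. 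By the classical fact that the dilatation of an orientable pseudo-Anosov is realized as an eigenvalue of the induced action on $H_1$ (see, e.g., \cite{FM}), $\lambda(\phi)$ is an eigenvalue of $\tilde\phi_*$ acting on $H_1(\Sigma_g, \bbC)$; combined with the symplectic symmetry of the spectrum and the Lefschetz decomposition of $\Lambda^l H_1$ relative to $\omega$, this produces an eigenvalue of modulus at least $\lambda(\phi)$ in the primitive-part quotient on which $\rho_{\hom}(\phi)$ acts, delivering the desired contradiction.
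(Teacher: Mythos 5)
Your route is essentially the contrapositive of the paper's: the paper shows directly (Lemmas~\ref{kvanttilgenlem} and \ref{korderlem}) that if $\sr_d(\phi)(x_0)>1$ for some $x_0$ then the order is infinite for all large $k$, using exactly the Galois-conjugation observation you make explicit (the order of $\diagramrep_A^{n,d}(\phi)$ at a primitive $4(k+N)$'th root of unity is independent of which primitive root), together with continuity of eigenvalues and the $q=-1$ identification with homology; the final step via orientability of the lifted foliations and \cite{BandBoyland} is the same. One small repair your version needs: the automorphism $q_k\mapsto q_k^{N+k-1}$ only exists when $N+k$ is even, and your hypothetical infinite set of levels with finite order need not contain infinitely many such $k$; you must either allow a second exponent (e.g.\ $N+k-2$ when $N+k$ is odd) or invoke the density statement of Lemma~\ref{kvanttilgenlem} directly.

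The genuine gap is in the case $n$ even with $0<d<n-2$. There, by Theorem~\ref{thmligengenyoung}, $\pi_{-1}^{n,d}$ is identified only with a \emph{proper} subrepresentation of $\hat\rho_{\hom}^{g,l}$ on $\Lambda^l H_1(\Sigma_g^2,\bbC)$, and knowing that the full exterior-power action has an eigenvalue of modulus $>1$ does not contradict all eigenvalues of the subrepresentation lying on the unit circle: the large eigenvalue could a priori live entirely in the complementary block. Your parenthetical ``or contains it as a subrepresentation'' acknowledges the identification but not the resulting problem. This is precisely where the paper's Proposition~\ref{uendeligordenprop} does its real work: it embeds $V^{n,d}\hookrightarrow V^{n+1,d+1}=V^{n,d}\oplus V^{n,d+2}$ (geometrically, gluing a pair of pants to pass to $\Sigma_{g+1}^1$), notes the action is block triangular, and for $l>m$ runs an inductive count of the multiplicities $d_x^{n,d}$ of eigenvalues of maximal modulus, equation~\eqref{binomiallighed}, to conclude $d_x^{n,d}>0$. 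Without this (or an equivalent) argument, your proof covers only odd $n$ and the cases $d=0$ (Theorem~\ref{mainthm}) and $d=n-2$.
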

As above, we say that $\phi \in \qMCG(0,n)$ is a \emph{homological pseudo-Anosov} if its image in $\MCG(0,n)$ is a pseudo-Anosov whose invariant foliations have the property that all non-puncture singularities are even-pronged and all punctures have odd-pronged singularities.

\begin{thm}
	\label{AMUlige}
  Assume that $n$ is even. If $\phi \in \qMCG(0,n)$ is a homological pseudo-Anosov, then $\rho_{2,k}(\phi)$ has infinite order for all but finitely many $k$. 
\end{thm}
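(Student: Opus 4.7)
The plan is to adapt the argument of Theorem~\ref{AMUulige}, substituting Theorem~\ref{thmligengenyoung} for Theorem~\ref{mainthm} to handle the even-$n$ case, while addressing the extra subtlety that $\phi$ lives in the full $\qMCG(0,n)$ rather than in the stabilizer of a marked point. First, since $\phi$ permutes the $n$ marked points by a permutation of finite order, some power $\phi^r$ fixes each puncture individually; as it suffices to show that $\rho_{2,k}(\phi^r)$ has infinite order for all but finitely many $k$, I may assume throughout that $\phi$ fixes the punctures. The homological pseudo-Anosov property is preserved under iteration.

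The assumption on singularities (odd-pronged at each puncture, even-pronged elsewhere) implies that the orientation cover of the unstable foliation of $\phi$ is the branched double cover $\Sigma_g \to S^2$ ramified exactly at the $n$ punctures, where $g = n/2 - 1$. By the Birman--Hilden correspondence, $\phi$ lifts to an element $\tilde\phi$ of the hyperelliptic mapping class group of $\Sigma_g$; since the lifted foliations are orientable, $\tilde\phi$ is pseudo-Anosov with orientable foliations and the dilation factor $\lambda_\phi > 1$ appears as a Perron--Frobenius eigenvalue of $\tilde\phi_*$ on $H_1(\Sigma_g;\bbC)$. Theorem~\ref{thmligengenyoung} applied with $d=0$ and $l=n/2$ identifies the $q=-1$ extension of $\pi^{n,0}_q(\phi)$ (viewed via a braid representative of $\phi$) with a subrepresentation of the action induced by $\tilde\phi_*$ on $\Lambda^{n/2} H_1(\Sigma_g^2;\bbC)$. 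Using the intertwiner described after that theorem -- wedges of homology classes of loops lifting arcs through the double cover -- I argue that a wedge containing the Perron--Frobenius eigenvector of $\tilde\phi_*$ survives into the subrepresentation, yielding an eigenvalue of modulus exceeding $1$ in the spectrum of $\pi^{n,0}_{-1}(\phi)$.

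To conclude, I relate $\rho_{2,k}(\phi)$ on $V_{2,k}(S^2,\{1,\dots,n\},(\boxdiagram,\dots,\boxdiagram))$ to the Jones representation $\pi^{n,0}_{q_k}(\phi)$, where $q_k$ is the root of unity determined by the level $k$. The matrix entries of $\pi^{n,0}_q$ in the standard Temperley--Lieb basis are Laurent polynomials in $q^{\pm 1/2}$ with integer coefficients, so the characteristic polynomial of $\pi^{n,0}_q(\phi)$ is a well-defined polynomial $P(x,q)\in\bbZ[x,q^{\pm 1/2}]$. If $\rho_{2,k}(\phi)$ had finite order for infinitely many $k$, the roots of $P(x,q_k)$ would be roots of unity for those $q_k$; a Kronecker-type density argument patterned on the one in \cite{AMU} for $\MCG(0,4)$ then forces the roots of $P(x,-1)$ to be roots of unity as well, contradicting the eigenvalue of modulus $>1$ established above.

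The main obstacle is the density argument of the final paragraph, propagating information from quantum values $q_k$ to $q=-1$. One must combine Kronecker's theorem on algebraic integers with a careful analysis of how eigenvalues of $\pi^{n,0}_q(\phi)$ depend on $q$, using the explicit matrix form afforded by the construction behind Theorem~\ref{thmligengenyoung}. A secondary concern is confirming that the Perron--Frobenius eigenvector of $\tilde\phi_*$ projects non-trivially onto the relevant subrepresentation; this should follow from genericity together with the concrete form of the intertwiner.
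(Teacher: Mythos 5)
Your proposal identifies the right ingredients individually, but it misses the single observation that makes the paper's proof essentially a one-liner, and in doing so opens a genuine gap.

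The paper's proof of Theorem~\ref{AMUlige} reduces immediately to Theorem~\ref{AMUulige}: by Jones \cite[Sect.~10]{JonHecke}, when $\lambda$ is the empty (in particular rectangular) diagram, the quantum $\SU(2)$-representation of $\qMCG(0,n)_\infty$ may be rescaled on each generator by a $k$-dependent root of unity so as to descend to a genuine representation of $\qMCG(0,n)$, which is then $\rho_{2,k}$ by construction. Since multiplying a linear map by a root of unity does not affect whether its order is finite, the conclusion follows directly from Theorem~\ref{AMUulige} -- which, via Proposition~\ref{uendeligordenprop}, already covers both parities of $n$. Your framing of the proof as ``substituting Theorem~\ref{thmligengenyoung} for Theorem~\ref{mainthm} to handle the even-$n$ case'' therefore misdiagnoses the actual difference between the two theorems: the issue is the mapping class group ($\qMCG(0,n)$ versus $\qMCG(0,n)_\infty$), not the parity.

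The gap that this creates in your argument: you reduce to $\phi$ fixing the punctures by passing to a power, but a puncture-fixing element of $\qMCG(0,n)$ still does not lie in $\qMCG(0,n)_\infty$, which carries an additional marked point $\infty$ with a framing. You then write things like $\pi^{n,0}_{q_k}(\phi)$ ``viewed via a braid representative,'' but $\pi^{n,0}_q$ is a representation of $B_n$, and different braid representatives of $\phi \in \qMCG(0,n)$ give outputs differing by a power of a scalar; without Jones's rescaling observation you have no canonical $\pi^{n,0}_{q_k}(\phi)$ to compare with $\rho_{2,k}(\phi)$, and no justification that the two differ only by a root of unity. This is precisely the step the paper cites Jones for, and it is not recoverable from the rest of your setup. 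A smaller point: your ``Kronecker-type density argument'' is both harder and backwards relative to the paper's Lemma~\ref{korderlem} -- the paper simply notes that the spectral radius of $\diagramrep^{n,d}_A(\phi)$ is a continuous function of $A$ which exceeds $1$ at $q=A^4=-1$, hence also exceeds $1$ at the primitive roots of unity $A_k$ produced by Lemma~\ref{kvanttilgenlem}, and a linear map with an eigenvalue of modulus greater than $1$ has infinite order. No Kronecker-style arithmetic of algebraic integers is needed.

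Once the rescaling reduction is inserted, the rest of your proposal -- Birman--Hilden lift, orientability of the lifted foliations, Perron--Frobenius eigenvalue, the wedge-product intertwiner from Theorem~\ref{thmligengenyoung} -- is a re-derivation of what Proposition~\ref{uendeligordenprop} already provides, so it becomes redundant rather than wrong; but as written the proposal does not close the gap between $\qMCG(0,n)$ and $B_n$.
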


Finally, let us mention one way our main theorem may be used to deal with general, not necessarily homological, pseudo-Anosovs.

\begin{cor}
	\label{AMUcor}
	Let $f$ be a representative for $[f]\in \MCG(0,n)_\infty$ whose image in $\MCG(0,n+1)$ is pseudo-Anosov, and let $P$ denote the set of singularities of the (un)stable foliation for $f$ with an odd number of prongs. Then $f$ defines an element in $\qMCG(S^2,\abs{P})_\infty$, and the image of $[f]$ under the quantum $\SU(N)$-representation -- where each point of $P$ is labelled by $\boxdiagram$, and $\infty$ is labelled by any $\lambda$ as in Section~\ref{quantumrepsintro} -- has infinite order for all but finitely many values of $k$.
	
	Likewise, if $f$ represents a pseudo-Anosov $[f] \in \MCG(0,n)$ with $P$ the set of odd-pronged singularities, then $f$ defines an element of $\MCG(0,\abs{P})$ whose image under $\rho_{2,k}$ has infinite order for all but finitely many levels.
\end{cor}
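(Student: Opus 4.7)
The plan is to reduce the corollary directly to Theorems~\ref{AMUulige} and \ref{AMUlige} by using the odd-pronged set $P$ to enlarge the set of marked points until the resulting mapping class satisfies the definition of a homological pseudo-Anosov.

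First I would argue that $f$ really does descend to a well-defined mapping class in $\qMCG(S^2,\abs{P})_\infty$ (or $\MCG(0,\abs{P})$ in the closed case). Because $f$ is pseudo-Anosov, it preserves each of its invariant foliations setwise, and hence permutes the set of singularities, preserving the number of prongs at each. In particular $f$ permutes $P$. Since we started from a representative already fixing $\infty$ (with its framing in the first case), $f$ restricts to a diffeomorphism of $S^2 \setminus (P \cup \{\infty\})$ (respectively $S^2 \setminus P$) whose isotopy class is the desired mapping class; the original marked points of $\MCG(0,n)_\infty$ or $\MCG(0,n)$ that happen to be even-pronged simply get ``filled in,'' and those that are odd-pronged automatically lie in $P$.

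Next I would check that the enlarged mapping class is a homological pseudo-Anosov in the sense of Section~\ref{quantumrepsintro}. With $P\cup\{\infty\}$ as the new puncture set, the punctures away from $\infty$ are precisely the points of $P$, which are odd-pronged by construction, and the non-puncture singularities are the remaining singularities of the foliation, all of which are even-pronged, again by the definition of $P$. Exactly the same check works in the closed-surface case, where there is no distinguished $\infty$. Theorem~\ref{AMUulige} then applies in the first situation and gives the infinite-order conclusion for $\rho_{N,k}^\lambda$.

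For the second statement we must additionally verify that $\abs{P}$ is even in order to apply Theorem~\ref{AMUlige}. This follows from the Euler--Poincar\'e formula for a measured foliation on $S^2$: writing $p_i$ for the number of prongs at the $i$-th singularity,
\begin{align*}
	\sum_i (2 - p_i) = 2\chi(S^2) = 4,
\end{align*}
so $\sum_i p_i \equiv 0 \pmod 2$, and since every even-pronged contribution is even, the number of odd-pronged singularities $\abs{P}$ must be even as well. I do not expect any genuine obstacle in this argument; the only point requiring care is the verification that the marking procedure does not accidentally leave an odd-pronged non-puncture singularity or an even-pronged puncture away from $\infty$, which is built directly into the choice of $P$.
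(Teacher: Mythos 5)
Your argument is correct and follows the same route as the paper, which simply cites Theorems~\ref{AMUulige} and \ref{AMUlige} together with the Euler--Poincar\'e parity observation for the second claim; your write-up merely makes explicit the (routine) verifications that $f$ permutes $P$ and that the re-marked class is a homological pseudo-Anosov.
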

Strictly speaking, in order to deal with the issue of framings, we defined only the genus $0$ quantum representations in the case where the marked points were $1,\dots,n$ and possibly $\infty$, so to be precise we should include in the statement a conjugation by a group isomorphism identifying the mapping class groups; this does not change the conclusion.
\begin{proof}[Proof of Corollary~\ref{AMUcor}]
	The first claim follows directly from Theorem~\ref{AMUulige}. By the Euler--Poincar\'e formula, a singular foliation must have an even number of singularities with an odd number of prongs, and so the second claim follows from Theorem~\ref{AMUlige}.
\end{proof}

Moreover, we show in Corollary~\ref{sfcor} that for the homological pseudo-Anosovs, the quantum representations determine their stretch factors, answering positively \cite[Question~1.1~(2)]{AMU} in this case. In Appendix~A we discuss the extent to which this is true for non-homological pseudo-Anosovs. As a different spin-off, we show in Proposition~\ref{inforder8} that there are quantum $\SU(2)$-representations whose image at level $8$ is infinite; this extends an earlier result due to Masbaum, \cite{Masinf}.

Finally, we should note that the related problem of giving homological interpretations of the Burau representation of braids for various values of $q$ in terms of finite covers, and in particular its relation to determining stretch factors of pseudo-Anosovs via representations, has been studied by a number of people; see e.g. \cite{BandBoyland}, \cite{McM}, \cite{Kober} and the references contained in these papers.

\textbf{Acknowledgements.} We would like to thank J{\o}rgen Ellegaard Andersen for first introducing us to the conjectures of \cite{AMU}, as the attempt to generalize the results of \cite{AMU} is what led us to the discoveries in this paper.

\section{Jones representations}

\newcommand{\TL}{\mathrm{TL}}
\begin{figure}
\begin{tikzpicture}[scale=0.25]
\foreach \x in {0,1,...,3}{
\node[draw,circle,inner sep=1pt,fill] at (\x,0) {};
\node[draw,circle,inner sep=1pt,fill] at (\x,4) {};
}
\draw (0,0) to[out=90,in=-90] (2,4);
\draw (1,0) to[out=90,in=-90] (3,4);
\draw (2,0) to[out=90,in=180] (2.5,1);
\draw (2.5,1) to[out=0,in=90] (3,0);
\draw (0,4) to[out=-90,in=-180] (0.5,3);
\draw (0.5,3) to[out=0,in=-90] (1,4);

\foreach \x in {0,1,...,3}{
\node[draw,circle,inner sep=1pt,fill] at (\x+6,0) {};
\node[draw,circle,inner sep=1pt,fill] at (\x+6,4) {};
}
\draw (3+6,0) to[out=90,in=-90] (1+6,4);
\draw (8,0) to[out=90,in=-90] (6,4);
\draw (7,0) to[out=90,in=0] (6.5,1);
\draw (6.5,1) to[out=180,in=90] (6,0);
\draw (9,4) to[out=-90,in=0] (8.5,3);
\draw (8.5,3) to[out=180,in=-90] (8,4);

\foreach \x in {0,1,...,3}{
\node[draw,circle,inner sep=1pt,fill] at (\x+14,2) {};
\node[draw,circle,inner sep=1pt,fill] at (\x+14,6) {};
\node[draw,circle,inner sep=1pt,fill] at (\x+14,-2) {};
}
\draw (3+6+8,-2) to[out=90,in=-90] (1+6+8,2);
\draw (8+8,-2) to[out=90,in=-90] (6+8,2);
\draw (7+8,-2) to[out=90,in=0] (6.5+8,-1);
\draw (6.5+8,-1) to[out=180,in=90] (6+8,-2);
\draw (9+8,2) to[out=-90,in=0] (8.5+8,1);
\draw (8.5+8,1) to[out=180,in=-90] (8+8,2);

\draw (14,2) to[out=90,in=-90] (16,6);
\draw (15,2) to[out=90,in=-90] (17,6);
\draw (16,2) to[out=90,in=180] (16.5,3);
\draw (16.5,3) to[out=0,in=90] (17,2);
\draw (14,6) to[out=-90,in=-180] (14.5,5);
\draw (14.5,5) to[out=0,in=-90] (15,6);

\foreach \x in {0,1,...,3}{
\node[draw,circle,inner sep=1pt,fill] at (\x+28,0) {};
\node[draw,circle,inner sep=1pt,fill] at (\x+28,4) {};
}
\draw (28,4) to[out=-90,in=-180] (28.5,3);
\draw (28.5,3) to[out=0,in=-90] (29,4);
\draw (29,0) to[out=90,in=0] (28.5,1);
\draw (28.5,1) to[out=180,in=90] (28,0);
\draw (30,0) to[out=90,in=-90] (30,4);
\draw (31,0) to[out=90,in=-90] (31,4);
\draw (28,2) node[left] {$=(-A^2-A^{-2})$};
\draw (5.5,2) node[left] {$\circ$};
\draw (12.5,2) node[left] {$=$};

\end{tikzpicture}
\caption{The multiplication in $\TL_4$  \label{fig:TLmult}}
\end{figure}
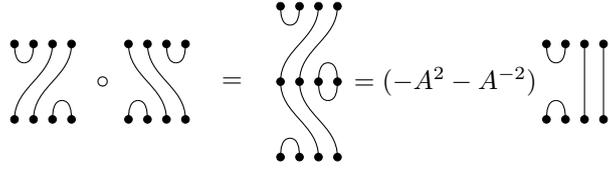

\begin{figure}
\begin{tikzpicture}[scale=0.25]

\foreach \x in {0,1,...,5}{
\node[draw,circle,inner sep=1pt,fill] at (\x-2,0) {};
\node[draw,circle,inner sep=1pt,fill] at (\x-2,4) {};
}
\draw (0,4) to[out=-90,in=-180] (0.5,3);
\draw (0.5,3) to[out=0,in=-90] (1,4);
\draw (1,0) to[out=90,in=0] (0.5,1);
\draw (0.5,1) to[out=180,in=90] (0,0);
\draw (2,0) to[out=90,in=-90] (2,4);
\draw (3,0) to[out=90,in=-90] (3,4);
\draw (-1,0) to[out=90,in=-90] (-1,4);
\draw (-2,0) to[out=90,in=-90] (-2,4);

\end{tikzpicture}
\caption{The element $e_3\in \TL_6$  \label{fig:TLei}}
\end{figure}
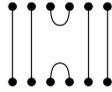

\label{jonesdefsect}
In this section, we briefly recall the definition of the Jones representations of braid groups, fixing the notation and normalizations used throughout the rest of the paper.

The Temperley--Lieb algebra $\TL_n$ is an algebra over $\bbC (A)$ and has a basis consisting of noncrossing pairings of $2n$ points, $n$ of them located at the bottom of a square and $n$ of them at the top. The multiplication is given by stacking two squares on top of each other, rescaling the vertical direction to obtain a square, removing all circles, and multiplying by $-A^2-A^{-2}$ for each removed circle, see Figure~\ref{fig:TLmult}. It is generated, as an algebra, by $n$ elements, $\id,e_1, e_2, \dots, e_{n-1}$ (Figure~\ref{fig:TLei}). Following Jones, we define a representation of the braid group $B_n$ by $\sigma_i \mapsto A\id + A^{-1}e_i$. There exists a Markov trace on $\TL_n$ and the trace of the Jones representation is essentially the Kauffman bracket of the closure of the braid.

For generic $A$, the irreducible sectors of this braid group action are in correspondence with $d \in \{0, \dots, n\}$ with $d \equiv n \mod 2$, and each of the underlying vector spaces has a basis given by trivalent trees with $n+1$ leaves, coloured admissibly by Jones--Wenzl idempotents, such that $n$ leaves are coloured by $1$ and the remaining leaf by $d$. This description is well-known to experts, but for concrete and concise reference, see e.g. \cite[Thm.~1.20,~Thm.~1.23]{Wan}; indeed the treatment contained in \cite[Sect.~1]{Wan} completely covers what we need.

It is possible to specialize the value of $A$ to a non-zero complex number, obtaining the $\bbC$-algebra $\TL_n(A)$, and as long as $A$ is not a $4k+8$'th root of unity for any $k < n$, this algebra is semi-simple, and the above discussion provides us with representations $\rho_{A}^{n,d} : B_n \to W_{A}^{n,d}$ (we remark that in the notation of \cite[Sect.~1.1.6]{Wan}, $W_A^{n,d} = \Hom(d,1^n)$, and the basis elements described above are denoted $\{e_{C,d}\}$).

\begin{lem}
  Let $A \in \bbC$ be as above, and let $q = A^{4}$. Then the braid group representation $\sigma_i \mapsto A^{-1} \rho_A^{n,d}(\sigma_i)$ is equivalent to $\pi_q^{n,d}$.
\end{lem}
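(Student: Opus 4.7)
The strategy is to show that both representations factor through the Temperley--Lieb algebra $\TL_n$ at the appropriate specialization, and that after the normalization $\sigma_i \mapsto A^{-1}\sigma_i$ they correspond to the same irreducible summand. The Jones representations $\pi_q^{n,d}$, as defined by Wenzl, arise from the Hecke algebra $H_n(q)$; for two-row Young diagrams they factor through the quotient $H_n(q) \twoheadrightarrow \TL_n$ with loop parameter $-q^{1/2} - q^{-1/2}$. Under the substitution $q = A^4$ (with the choice $q^{1/2} = A^2$), this loop parameter equals $-A^2 - A^{-2}$, which is exactly the loop parameter of the $\TL_n(A)$ defined in this section. Consequently the two Temperley--Lieb algebras are identified, and their semisimple irreducible representations are indexed by the same set of integers $d \in \{0,\dots,n\}$ with $d \equiv n \pmod 2$.

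Next I would compare the images of the braid generators. Setting $\tau_i := A^{-1}\rho_A^{n,d}(\sigma_i) = \id + A^{-2} e_i$ and using $e_i^2 = (-A^2 - A^{-2}) e_i$, a direct computation gives
\begin{equation*}
\tau_i^2 = \id + 2A^{-2} e_i + A^{-4}(-A^2-A^{-2})e_i = \id + (A^{-2} - A^{-6}) e_i = (1 - q^{-1})\tau_i + q^{-1}\id,
\end{equation*}
so $(\tau_i - \id)(\tau_i + q^{-1}\id) = 0$. This is precisely the quadratic relation satisfied by the image of $\sigma_i$ under Wenzl's $\pi_q^{n,d}$ (in the normalization whose generators have eigenvalues $1$ and $-q^{-1}$). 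Since the braid relations are automatic from those in $\TL_n$, this shows $\tau_i \mapsto$ generator of $\pi_q^{n,d}$ is a genuine algebra homomorphism on the Hecke/Temperley--Lieb quotient.

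It then remains to match the specific irreducible summand. Both $\pi_q^{n,d}$ and $\rho_A^{n,d}$ are, by construction, the irreducible $\TL_n$-representation corresponding to the two-row Young diagram with $d$ specifying the difference between row lengths; equivalently, in the trivalent tree description recalled just before the lemma, both are the summand with the distinguished leaf coloured by $d$. As semisimple algebras are determined by their irreducibles, the chosen irreducible is unique up to equivalence, and the explicit basis of $e_{C,d}$'s in \cite{Wan} gives an intertwiner with Wenzl's basis.

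The main technical obstacle is purely one of bookkeeping normalizations: different references use different quadratic relations for the Hecke algebra generators (e.g.\ $(g_i - q)(g_i + 1) = 0$ versus $(g_i - 1)(g_i + q^{-1}) = 0$) and different sign conventions for $q^{1/2}$ in terms of $A$. Once one fixes conventions so that the two quadratic relations and the two loop parameters coincide, the factor $A^{-1}$ is the unique scalar rescaling that turns Jones' $\sigma_i \mapsto A\id + A^{-1} e_i$ into Wenzl's normalization, and the equivalence of representations follows from the uniqueness of irreducible $\TL_n$-modules of the prescribed dimension.
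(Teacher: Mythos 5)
Your argument is correct in substance, but it takes a genuinely different route from the paper's. The paper proves the lemma by exhibiting an explicit bijection between the two natural bases --- standard Young tableaux of shape $\lambda$ for Wenzl's $\pi_q^{n,d}$ and admissibly coloured trivalent trees for $W_A^{n,d}$ --- and then checking that the matrices of the generators, given explicitly in \cite[(2.3)]{Wen} and on \cite[p.~18]{Wan}, agree under this correspondence. You instead argue structurally: both representations factor through the same specialization $\TL_n(A)$ (same loop parameter under $q=A^4$), the rescaled generators satisfy the same quadratic relation, and the two irreducible summands labelled by $d$ must coincide by semisimplicity. Your computation $\tau_i^2=(1-q^{-1})\tau_i+q^{-1}\id$ is right, and the identification of summands can indeed be nailed down by dimension, since $\dim V^{n,d}=\binom{n}{(n-d)/2}-\binom{n}{(n-d)/2-1}$ is strictly decreasing in $d$. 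What your approach buys is independence from the explicit matrix formulas; what the paper's buys is the concrete intertwiner (tableaux $\leftrightarrow$ trees) that is reused implicitly later. The one point you should be more careful about is the eigenvalue normalization you defer to the last paragraph: $A^{-1}\rho_A^{n,d}(\sigma_i)$ has eigenvalues $\{1,-q^{-1}\}$, whereas Wenzl's literal convention $g_i^2=(q-1)g_i+q$ gives $\{q,-1\}$; these differ by the global factor $q$, so the claim that $A^{-1}$ is ``the unique scalar'' matching the two conventions only holds once one fixes the paper's (unstated) normalization of $\pi_q^{n,d}$ --- otherwise the matching factor would be $A^3$. Since the equivalence class of the representation is all that is used downstream, this is harmless, but it deserves one explicit sentence rather than being folded into ``bookkeeping''.
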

\begin{proof}
  Once again, this is well-known to experts. A basis of the representation space for $\pi_q^{n,d}$ is given by standard Young tableaux on the relevant Young diagram, to each such tableau there is naturally associated a coloured tree in $W^{n,d}_A$, and this association is bijective. Thus, showing the equivalence boils down to checking that the actions on each of the bases agree under this correspondence. For $\pi_q^{n,d}$, this action is given explicitly in \cite[(2.3)]{Wen}, and for $\rho_A^{n,d}$ as explicitly on \cite[p.~18]{Wan}.
\end{proof}
Recall that our goal is to manipulate the $\pi_q^{n,d}$ into representations that we may specialize to all non-zero values of $q$, rather than just the $n$-regular ones. With an eye towards Wenzl's recursive formula for the Jones--Wenzl idempotents, one finds that all problematic factors occur in the terms that are not of ``highest order'' (in the language of \cite{FWW}). This leads us to the following:

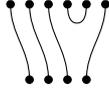
\begin{figure}
\begin{tikzpicture}[scale=0.25]

\foreach \x in {0,1,...,5}{
\node[draw,circle,inner sep=1pt,fill] at (\x-2,4) {};
}
\foreach \x in {0,1,...,3}{
\node[draw,circle,inner sep=1pt,fill] at (\x-1,0) {};
}
\draw (1,4) to[out=-90,in=-180] (1.5,3);
\draw (1.5,3) to[out=0,in=-90] (2,4);
\draw (2,0) to[out=90,in=-90] (3,4);
\draw (1,0) to[out=90,in=-90] (0,4);
\draw (0,0) to[out=90,in=-90] (-1,4);
\draw (-1,0) to[out=90,in=-90] (-2,4);

\end{tikzpicture}
\caption{A basis element of $V^{6,4}$.}
\label{ekspaabasiselement}
\end{figure}

\begin{figure}
\begin{tikzpicture}[scale=0.25]
\foreach \x in {0,1,...,5}{
\node[draw,circle,inner sep=1pt,fill] at (\x-2,4) {};
}
\foreach \x in {0,1,...,3}{
\node[draw,circle,inner sep=1pt,fill] at (\x-1,0) {};
}
\draw (-2,2) node[left] {$\rho_A(\sigma_2)$};
\draw (1,4) to[out=-90,in=-180] (1.5,3);
\draw (1.5,3) to[out=0,in=-90] (2,4);
\draw (2,0) to[out=90,in=-90] (3,4);
\draw (1,0) to[out=90,in=-90] (0,4);
\draw (0,0) to[out=90,in=-90] (-1,4);
\draw (-1,0) to[out=90,in=-90] (-2,4);
\draw (6.5,2) node[left] {$=A$};

\foreach \x in {0,1,...,5}{
\node[draw,circle,inner sep=1pt,fill] at (\x-2+8.5,4) {};
}
\foreach \x in {0,1,...,3}{
\node[draw,circle,inner sep=1pt,fill] at (\x-1+8.5,0) {};
}
\draw (1+8.5,4) to[out=-90,in=-180] (1.5+8.5,3);
\draw (1.5+8.5,3) to[out=0,in=-90] (2+8.5,4);
\draw (2+8.5,0) to[out=90,in=-90] (3+8.5,4);
\draw (1+8.5,0) to[out=90,in=-90] (0+8.5,4);
\draw (0+8.5,0) to[out=90,in=-90] (-1+8.5,4);
\draw (-1+8.5,0) to[out=90,in=-90] (-2+8.5,4);
\end{tikzpicture}
\\[0.2cm]
\begin{tikzpicture}[scale=0.25]
\foreach \x in {0,1,...,5}{
\node[draw,circle,inner sep=1pt,fill] at (\x-2,4) {};
}
\foreach \x in {0,1,...,3}{
\node[draw,circle,inner sep=1pt,fill] at (\x-1,0) {};
}
\draw (-2,2) node[left] {$\rho_A(\sigma_3)$};
\draw (1,4) to[out=-90,in=-180] (1.5,3);
\draw (1.5,3) to[out=0,in=-90] (2,4);
\draw (2,0) to[out=90,in=-90] (3,4);
\draw (1,0) to[out=90,in=-90] (0,4);
\draw (0,0) to[out=90,in=-90] (-1,4);
\draw (-1,0) to[out=90,in=-90] (-2,4);
\draw (6.5,2) node[left] {$=A$};

\foreach \x in {0,1,...,5}{
\node[draw,circle,inner sep=1pt,fill] at (\x-2+8.5,4) {};
}
\foreach \x in {0,1,...,3}{
\node[draw,circle,inner sep=1pt,fill] at (\x-1+8.5,0) {};
}
\draw (1+8.5,4) to[out=-90,in=-180] (1.5+8.5,3);
\draw (1.5+8.5,3) to[out=0,in=-90] (2+8.5,4);
\draw (2+8.5,0) to[out=90,in=-90] (3+8.5,4);
\draw (1+8.5,0) to[out=90,in=-90] (0+8.5,4);
\draw (0+8.5,0) to[out=90,in=-90] (-1+8.5,4);
\draw (-1+8.5,0) to[out=90,in=-90] (-2+8.5,4);
\draw (16.5,2) node[left] {$+A^{-1}$};
\foreach \x in {0,1,...,5}{
\node[draw,circle,inner sep=1pt,fill] at (\x-2+18.5,4) {};
}
\foreach \x in {0,1,...,3}{
\node[draw,circle,inner sep=1pt,fill] at (\x-1+18.5,0) {};
}
\draw (0+18.5,4) to[out=-90,in=-180] (0.5+18.5,3);
\draw (0.5+18.5,3) to[out=0,in=-90] (1+18.5,4);
\draw (2+18.5,0) to[out=90,in=-90] (3+18.5,4);
\draw (1+18.5,0) to[out=90,in=-90] (2+18.5,4);
\draw (0+18.5,0) to[out=90,in=-90] (-1+18.5,4);
\draw (-1+18.5,0) to[out=90,in=-90] (-2+18.5,4);
\end{tikzpicture}
\\[0.2cm]
\begin{tikzpicture}[scale=0.25]
\foreach \x in {0,1,...,5}{
\node[draw,circle,inner sep=1pt,fill] at (\x-2,4) {};
}
\foreach \x in {0,1,...,3}{
\node[draw,circle,inner sep=1pt,fill] at (\x-1,0) {};
}
\draw (-2,2) node[left] {$\rho_A(\sigma_4)$};
\draw (1,4) to[out=-90,in=-180] (1.5,3);
\draw (1.5,3) to[out=0,in=-90] (2,4);
\draw (2,0) to[out=90,in=-90] (3,4);
\draw (1,0) to[out=90,in=-90] (0,4);
\draw (0,0) to[out=90,in=-90] (-1,4);
\draw (-1,0) to[out=90,in=-90] (-2,4);
\draw (9.5,2) node[left] {$=-A^{-3}$};

\foreach \x in {0,1,...,5}{
\node[draw,circle,inner sep=1pt,fill] at (\x-2+11.5,4) {};
}
\foreach \x in {0,1,...,3}{
\node[draw,circle,inner sep=1pt,fill] at (\x-1+11.5,0) {};
}
\draw (1+11.5,4) to[out=-90,in=-180] (1.5+11.5,3);
\draw (1.5+11.5,3) to[out=0,in=-90] (2+11.5,4);
\draw (2+11.5,0) to[out=90,in=-90] (3+11.5,4);
\draw (1+11.5,0) to[out=90,in=-90] (0+11.5,4);
\draw (0+11.5,0) to[out=90,in=-90] (-1+11.5,4);
\draw (-1+11.5,0) to[out=90,in=-90] (-2+11.5,4);
\end{tikzpicture}
\caption{The actions of $\sigma_2$, $\sigma_3$, and $\sigma_4$ on the basis element from Figure~\ref{ekspaabasiselement}.}
\label{ekspaavirkning}
\end{figure}
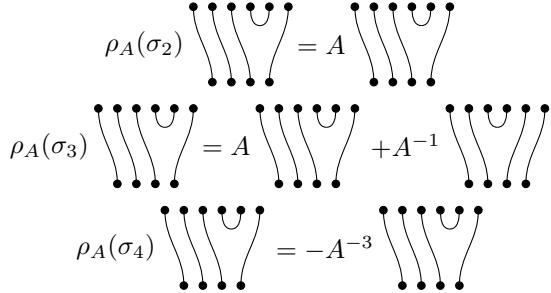

\begin{definition}
  Let $V^{n,d}$, $0 \leq d \leq n$, denote the complex vector space spanned by non-crossing pairings of $n+d$ points, such that $n$ points are ``at the top'' and $d$ points ``at the bottom'', in such a way that a point at the bottom will be connected to a point at the top; see Figure~\ref{ekspaabasiselement}.
  
  The braid group $B_n$ acts on this space exactly as on the Temperley--Lieb algebra: by stacking braids on diagrams, and resolving the result by using the Kauffman bracket. However, if two bottom points end up connected when doing so, the result is multiplied by $0$; see Figure~\ref{ekspaavirkning}.
\end{definition}
\begin{lem}
	The representation $\diagramrep_{A}^{n,d}$ of $B_n$ on $V^{n,d}$ is equivalent to $\rho_A^{n,d}$ when $A \in \bbC \setminus \{0\}$ is not a $4k+8$'th root of unity for any $k < n$.
\end{lem}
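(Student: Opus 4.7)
The plan is to exhibit an explicit intertwiner $\Phi : V^{n,d} \to W_A^{n,d}$ by attaching the $d$-th Jones--Wenzl idempotent $f_d \in \TL_d(A)$ along the $d$ bottom points of every diagram. Under the stated hypothesis on $A$ the relevant quantum integers are non-zero, so $f_d$ exists and satisfies $f_d^2 = f_d$ together with the killing property $e_j f_d = f_d e_j = 0$ for all $1 \leq j \leq d-1$.

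First I would recall that, with respect to the identification $W_A^{n,d} = \Hom(d,1^n)$ used in the previous lemma, a natural spanning set for $W_A^{n,d}$ consists of planar (non-crossing) tangle diagrams from $d$ points to $n$ points, with a copy of $f_d$ inserted across the $d$-point boundary. Because any such diagram featuring a cap that pairs two of the $d$ bottom strands lies in the kernel of $f_d$, a basis is given by those non-crossing pairings of $n+d$ points in which every bottom point is connected to a top point, $f_d$ being attached at the bottom. This is precisely the basis of $V^{n,d}$ from the definition, and the prescription sending such a basis diagram $D$ to the composite $D \cdot f_d$ therefore defines a linear isomorphism $\Phi$.

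Next I would verify that $\Phi$ intertwines the two actions. On either side the generator $\sigma_i$ acts by stacking the formal resolution $A\,\id + A^{-1}e_i$ on top of a diagram. Given $D \in V^{n,d}$, I expand $\rho_A^{n,d}(\sigma_i)\Phi(D) = (A\,\id + A^{-1}e_i)\cdot D \cdot f_d$ and $\Phi(\diagramrep_A^{n,d}(\sigma_i)D)$ simultaneously. Each non-crossing diagram appearing in the expansion matches term-for-term, except when the resolution produces a new pairing of two bottom points; any such diagram has the shape $(\cdots e_j \cdots)\cdot f_d$ for some $1 \leq j \leq d-1$, hence vanishes in $W_A^{n,d}$ by the Jones--Wenzl killing property, while the corresponding diagram is set to zero in $V^{n,d}$ by construction.

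The only point requiring care is the identification of a bottom cap with a factor of $e_j$ sitting just above the Jones--Wenzl insertion; this is forced by planarity, since a short non-crossing pairing of two adjacent bottom strands is by definition the generator $e_j \in \TL_d(A)$, and any bottom cap can be slid into this form without producing crossings. I expect no substantive obstacle beyond bookkeeping on the bases, since the whole argument is an instance of the standard principle that $f_d$ absorbs every ``cup from below''.
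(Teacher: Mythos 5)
Your argument is correct, but it runs in the opposite direction from the paper's. The paper starts from the coloured-tree basis of $W_A^{n,d}$ and defines a map $W_A^{n,d} \to V^{n,d}$ by replacing each Jones--Wenzl idempotent by its leading (identity) term and discarding lower-order contributions; equivariance is then automatic, and the hypothesis on $A$ enters through the admissibility condition for coloured trees, which reduces to the triangle inequality and gives surjectivity. You instead construct the map in the other direction, $V^{n,d} \to W_A^{n,d} = \Hom(d,1^n)$, $D \mapsto D \cdot f_d$, and obtain equivariance from the killing property $e_j f_d = 0$; this has the merit of explaining where the ad hoc rule ``set a diagram to zero if two bottom points become connected'' comes from, whereas the paper's route ties in directly with the coloured-tree basis used in the preceding lemma. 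Your justification of the absorption step is sound: planarity forces any bottom-to-bottom arc to contain an innermost arc joining two \emph{adjacent} bottom points, which is literally a factor $e_j \in \TL_d(A)$ sitting against $f_d$. The one point you should make explicit is linear independence of the spanning set $\{D \cdot f_d\}$: the root-of-unity hypothesis is needed not only for the existence of $f_d$ but also to guarantee semisimplicity of $\TL_n(A)$, so that $\dim \Hom(d,1^n)$ equals the number of admissible diagrams; for degenerate $A$ these composites can become linearly dependent. With that remark added, your proof is complete and at the same level of rigour as the paper's.
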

\begin{proof}
  Recall that $W_A^{n,d}$ has a basis given in terms of trees coloured by Jones--Wenzl idempotents. Replacing an idempotent by the corresponding multicurve, ignoring lower order terms, we obtain a map $W_A^{n,d} \to V^{n,d}$, which is $B_n$-equivariant by construction. By assumption on $A$, the $k$-admissibility condition of coloured trees (c.f. \cite[Def.~1.24]{Wan}) boils down to the triangle inequality, which in turn means that the map is surjective.
\end{proof}

Since $\diagramrep_A^{n,d}$ is defined over $\bbZ[A,A^{-1}]$, we obtain our desired extension of $\pi_q^{n,d}$ by letting $q = A^4$ and taking the tensor product with the $1$-dimensional representation $\sigma_i \mapsto A$.

We should note that considering the representation in this particular basis is not a new idea: see for instance \cite{RSA} for a thorough description of the action which moreover puts focus on the case $q = -1$ which is of interest to us.

\section{Proofs of main theorems}
\label{mainthmproof}
In this section, we show Theorems~\ref{mainthm}, \ref{thmuligen}, and \ref{thmligengenyoung}. Throughout the section, we assume that $A = \exp(-\pi i/4)$. 

\subsection{Construction of the (iso)morphism}
We now construct the intertwining morphism $\phi$ as needed in Theorems~\ref{thmuligen} and \ref{thmligengenyoung}. For Theorem~\ref{mainthm} the construction needs to be tweaked slightly; a concern we defer until it becomes relevant.

For a diagram $D$ in the basis of $V^{n,d}$ we denote by $D_0$ the set of arcs connecting two top points. We identify the points at the top with the numbers $1, 2, \dots, n$, and for $a\in D_0$ we denote by $a_0$ and $a_1$ the left and right end point of $a$ respectively. The set of bottom points will be called $\infty$. The symbol $i$ will be used to denote indices and the imaginary unit interchangeably; this should cause no confusion.

\begin{definition}
\label{homomorfidef}
Let $c_1, \dots, c_{n-1}$ be simple closed curves on the surface $\Sigma_g^m$, such that $c_i \cap c_{i+1}$ consists of a single point, and such that $c_i$ and $c_j$ are disjoint for $\abs {i-j}>1$. We orient the curves such that $t_{c_i}(c_{i+1}) = c_{i+1} + c_i$, where we abuse notation and use $c_i$ also for the element that the oriented curve defines in $H_1(\Sigma_g^m, \bbC)$. Assume that $\spn\{c_i \mid i=1, \dots, n-1\}$ has dimension $n-1$. 

We order the arcs in $D_0$ by their starting points so that $e < e'$ if $e_0 < e'_0$. To each arc $e$ in $D_0$ we associate the element $X_e = \sum_{i=e_0}^{e_1-1} c_i \in \spn\{c_i\}\subseteq H_1(\Sigma_g^m,\bbC)$.

Define a map $\phi : V^{n,d} \to \Lambda^l H_1(\Sigma_g^m,\bbC)$, by letting, for a basis diagram $D \in V^{n,d}$,
\begin{align}
  \label{defafmorfi}
	\varphi(D) = f(D) \bigwedge_{e\in D_0} X_e,
\end{align}
where $f(D) = (-i)^{\sum_{e\in D_0} w(e) + v(e)}$; here, $w(e)$ denotes the number of arcs between the starting and ending point of $e$, $v(e)$ denotes the number of points greater than $e_1$ that are connected to $\infty$ (see Figure~\ref{fig:illafwava}), and the wedge product runs through $D_0$ from the first to the final arc with respect to the ordering on $D_0$.

\begin{figure}
\begin{tikzpicture}[scale=0.25]

\newcount\ga
\ga=0
\foreach \x in {6,2,1,3,0,5,4}{
\draw (4*\the\ga,6) to[out=-90,in=-90] (2+4*\x,6);
\global\advance\ga by1
}

\draw[red,thick] (12,6) to[out=-90,in=-90] (14,6);
\draw[red,thick] (4,6) to[out=-90,in=-90] (10,6);
\draw[red,thick] (6,6) to[out=-90,in=-90] (8,6);
\draw (-2,6) -- +(0,-8);
\draw[blue,thick] (28,6) -- +(0,-8);
\draw[blue,thick] (30,6) -- +(0,-8);
\draw (10,3) node[right] {$e$};
\foreach \x in {-1,0,1,...,15}{
\node[draw,circle,inner sep=1pt,fill] at (2*\x,6) {};
}
\end{tikzpicture}
\caption{The arcs contributing to $w(e)=3$ have been coloured red, and the arcs contributing to $v(e)=2$ have been coloured blue. \label{fig:illafwava}}
\end{figure}
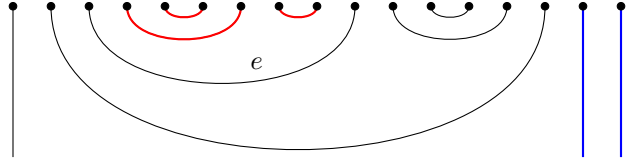

\end{definition}
\subsection{Proof of equivariance}\label{equivariancesubsect}
\begin{lemma}
\label{homomofilemma}
Denote by $T_i$ the action of $e_i$ on $V^{n,d}$. Then we have, for all $i=1, 2, \dots, n-1$ and $D\in V^{n,d}$, that
\[
	\varphi(D+iT_i D) = t_{c_i} \varphi(D).
\]
\end{lemma}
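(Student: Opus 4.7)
The plan is to verify the identity $\varphi(D + iT_iD) = t_{c_i}\varphi(D)$ by a case analysis on the partners $\alpha, \beta \in \{1,\ldots,n\} \cup \{\infty\}$ of $i$ and $i+1$ in $D$. Expanding the wedge
$t_{c_i}\varphi(D) = f(D) \bigwedge_{e \in D_0}(X_e + \omega(X_e, c_i)c_i)$
by the Picard--Lefschetz formula and using $c_i \wedge c_i = 0$ collapses all cross terms and gives
\[
  t_{c_i}\varphi(D) = \varphi(D) + f(D) \sum_{e \in D_0} \omega(X_e, c_i)\, \Omega_e,
\]
where $\Omega_e$ denotes the wedge obtained from $\bigwedge_{e' \in D_0} X_{e'}$ by substituting $X_e \mapsto c_i$. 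A short computation with $X_e = c_{e_0} + \cdots + c_{e_1 - 1}$, using the intersection convention $\omega(c_j, c_{j+1}) = -1$ forced by the stated relation $t_{c_i}(c_{i+1}) = c_{i+1} + c_i$, shows that $\omega(X_e, c_i) = 0$ unless exactly one endpoint of $e$ lies in $\{i, i+1\}$. The crucial identity $c_i = X_{(i, i+1)}$ then allows each non-zero correction term to be interpreted as a wedge underlying a basis diagram of $V^{n,d}$.

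The case analysis proceeds as follows. If $\alpha = i+1$, then stacking $e_i$ on $D$ closes a loop contributing the factor $-A^2 - A^{-2}$, which vanishes at $A = \exp(-\pi i/4)$, so $T_iD = 0$; on the other hand $X_{(i, i+1)} = c_i$ is already a wedge factor of $\varphi(D)$, so every $\Omega_e$ contains two copies of $c_i$ and vanishes, confirming $t_{c_i}\varphi(D) = \varphi(D)$. If $\alpha = \beta = \infty$, stacking produces a bottom-bottom arc so $T_iD = 0$, and no arc of $D_0$ has an endpoint in $\{i, i+1\}$, so all $\omega(X_e, c_i) = 0$. If exactly one of $\alpha, \beta$ is $\infty$, then $T_iD$ has the unique arc of $D_0$ meeting $\{i, i+1\}$ replaced by $(i, i+1)$, with the liberated endpoint sent to $\infty$; exactly one $\Omega_e$ is non-zero and the substitution $X_e \mapsto c_i = X_{(i,i+1)}$ matches it with the wedge underlying $\varphi(T_iD)$. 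Finally, if $\alpha$ and $\beta$ are distinct top points $k$ and $l$, then $T_iD$ has new arcs $(i, i+1)$ and one connecting $k$ to $l$; two correction terms are non-zero, and an identity such as $X_{(k, l)} = X_{(k, i)} + c_i + X_{(i+1, l)}$ (in the sub-case $k < i < i+1 < l$, with analogous identities in the two remaining sub-cases $l < k < i < i+1$ and $i < i+1 < l < k$ which exhaust the non-crossing possibilities), combined with $c_i \wedge c_i = 0$ and the antisymmetry of the wedge, collapses the two corrections into the wedge underlying $\varphi(T_iD)$.

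The main technical obstacle, which occupies the bulk of the proof in the two non-trivial cases, is the sign and coefficient bookkeeping: one must verify that the ratio $f(D)/f(T_iD)$, combined with the sign from reordering the wedge as the arcs of $D$ rearrange into the sorted order required by $(T_iD)_0$, is exactly $i$. The weights $(-i)^{w(e) + v(e)}$ have been calibrated precisely for this purpose: as arcs are modified and endpoints migrate to or from $\infty$, the changes in the nested-arc counts $w(e)$ and the $\infty$-arc counts $v(e)$ conspire with the wedge reordering sign to produce the correct factor of $i$. Each sub-case (determined by the relative positions of $j, k, l$ with respect to $\{i, i+1\}$) requires its own local verification, though all follow the same schema.
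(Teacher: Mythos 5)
Your approach is structurally the same as the paper's: identify when $(t_{c_i})_*$ acts non-trivially on a factor $X_e$ (precisely when $e$ has exactly one endpoint in $\{i,i+1\}$), expand the twisted wedge, and match the correction terms against $\varphi(T_iD)$ case by case, using identities of the form $X_{(k,l)} = X_{(k,i)} + c_i + X_{(i+1,l)}$ and the observation that $c_i = X_{(i,i+1)}$. The Picard--Lefschetz reformulation $t_{c_i}\varphi(D) = \varphi(D) + f(D)\sum_e \omega(X_e,c_i)\,\Omega_e$ is a cleaner packaging of the same expansion, and your case decomposition (both partners $\in\{i,i+1\}$; both $\infty$; exactly one $\infty$; both distinct top points, with the three non-crossing sub-cases) matches the paper's.

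However, there is a genuine gap. You yourself name the crucial step -- verifying that $f(D)/f(T_iD)$ together with the wedge-reordering sign produces exactly the factor that makes $i\,\varphi(T_iD)$ equal the correction term -- and then decline to carry it out, asserting instead that "the weights have been calibrated precisely for this purpose" and "each sub-case requires its own local verification." But this verification \emph{is} the content of the lemma: the precise definition $f(D) = (-i)^{\sum_e w(e)+v(e)}$ exists only because it makes these sign computations close, and there is no a priori reason to trust it without checking. In the paper's proof, those computations occupy essentially the entire argument -- tracking how $w(e)$ and $v(e)$ change in each sub-case, the resulting power of $(-i)$, and the $(-1)^{w(b)}$ sign from moving a factor of $c$ past a block of nested arcs $X_{m+1}^{m+w(b)}$. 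Without doing this, you have established only that the two sides of the identity are wedges of the same vectors up to a scalar; the equality of the scalars, which is where an error in the definition of $f$ would reveal itself, is left unproven. In particular the case of two distinct top endpoints has three sub-cases $(\xi_a,\xi_b) \in \{(-,+),(+,+),(-,-)\}$ which genuinely behave differently in the bookkeeping (the $(+,+)$ and $(-,-)$ cases require the extra reordering sign $(-1)^{w(b)}$ that the $(-,+)$ case does not), and this asymmetry cannot be dismissed as "following the same schema."
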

\begin{proof}
Let $c=c_i$. We first observe that $(t_c)_*$ acts trivally on $X_e$ if $e$ is not connected to exactly one of $i$ or $i+1$. In particular, if the points $i$, $i+1$ are connected, or if both are connected to $\infty$, then 
\[
	(t_c)_*( \varphi(D)) = \varphi(D).
\]
Since in these cases $T_i D = 0$, we obtain the claim of the Lemma.

Let us therefore consider the case where we have two distinct arcs connecting to $i$, $i+1$, not both connecting to $\infty$. Let us first assume that $i+1$ is connected to $\infty$, and let us denote the arc ending in $i$ by $a$.

Then $(t_c)_*$ acts trivially on all factors of $\varphi(D)$ except for $X_a$. If $a$ is the $j$'th arc in $D_0$, then
\[
	(t_c)_* \varphi(D) = f(D) X_1^{j-1} \wedge (X_a - c) \wedge X_{j+1}^k= \varphi(D) +  f(D) X_1^{j-1} \wedge (-c) \wedge X_{j+1}^l,
\]
where we denote the $i$'th edge in $D_0$ by $e^i$ and write
\[
	X_i^j = \bigwedge_{l=i}^j X_{e^i}.
\]
On the other hand, we have
\begin{align*}
	\varphi(T_i (D)) &= (-i)^{-2w(a)-1} f(D) X_1^{j-1} \wedge X_{j+1}^{j+w(a)} \wedge c \wedge X_{j+w(a)+1}^l \\
								   &= (-i)^{-2w(a)-1} (-1)^{w(a)} f(D) X_1^{j-1} \wedge c \wedge X_{j+1}^{l} \\
								   &= -if(D) X_1^{j-1} \wedge (-c) \wedge X_{j+1}^{l}.
\end{align*}
Likewise, if $i$ is connected to $\infty$ but $i+1$ is not, we denote by $a$ the arc connecting to $i+1$ and find that
\[
	(t_c)_* \varphi(D) = f(D) X_1^{j-1} \wedge (X_a + c) \wedge X_{j+1}^l= \varphi(D) +  f(D) X_1^{j-1} \wedge  c \wedge X_{j+1}^l,
\]
and
\begin{align*}
	\varphi(T_i(D)) = (-i)^{w(a)-w(a)+1} f(D) X_1^{j-1} \wedge c \wedge X_{j+1}^l 	= -i f(D )X_1^{j-1} \wedge c \wedge X_{j+1}^l .
\end{align*}
Now only the cases where two distinct arcs from $D_0$ connect to $i$ and $i+1$ remain. Denote these arcs by $a$ and $b$, and assume that they are the $j$'th and $m$'th arc respectively, with $j<m$. There are three possibilities, corresponding to whether or not $a$ and $b$ start or end in $i$ and $i+1$ (the case where $a$ starts in $i$ and $b$ ends in $i+1$ is not possible if $a\neq b$). We denote these cases by the signs $(\xi^i_a, \xi^i_b)$ of the intersections of the curves with $c$, i.e. $(t_{c})_* X_a =X_a + \xi^i_a c$, and likewise for $b$. Then
\begin{align*}
	(t_c)_* \varphi(D) &= f(D) X_1^{j-1} \wedge (X_a + \xi_a^i c) \wedge X_{j+1}^{m-1} \wedge (X_b + \xi_b^i c) \wedge X_{m+1}^l \\
			    &= \varphi(D) + f(D) X_1^{j-1} \wedge (\xi^i_b X_a - \xi_a^i X_b)\wedge X_{j+1}^{m-1} \wedge c \wedge X_{m+1}^l \\
			    &= \varphi(D) + f(D) X_1^{j-1} \wedge (\xi^i_b X_a \pm c - \xi_a^i X_b)\wedge X_{j+1}^{m-1} \wedge c \wedge X_{m+1}^l.
\end{align*}
If now $(\xi_a,\xi_b) = (-,+)$, then we have
\begin{align*}
	\varphi(T_i (D))&= (-i)^{-w(a) -w(b) + w(a)+w(b) +1} f(D) X_{1}^{j-1} \wedge (a+c+b) \wedge X_{j+1}^{m-1} \wedge c \wedge X_{m+1}^l \\
					     &= -i f(D) X_{1}^{j-1} \wedge (a+c+b) \wedge X_{j+1}^{m-1} \wedge c \wedge X_{m+1}^l.
\end{align*}
If $(\xi_a,\xi_b) = (+,+)$,
\begin{align*}
	\varphi(T_i (D))&= (-i)^{-w(a) -w(b) + w(a)-w(b) -1} f(D) \\
	                         &\phantom{{}= }\cdot X_{1}^{j-1} \wedge c\wedge X_{j+1}^{m-1} \wedge X_{m+1}^{m+w(b)} \wedge  (a-c-b)  \wedge X_{m+w(b) +1}^l \\
										       &= i(-i)^{-2w(b)}(-1)^{w(b)}  f(D) X_{1}^{j-1} \wedge c \wedge X_{j+1}^{m-1} \wedge (a-c-b) \wedge X_{m+1}^l\\
										       &= -i f(D) X_{1}^{j-1} \wedge (a-c-b) \wedge X_{j+1}^{m-1} \wedge c \wedge X_{m+1}^l.
\end{align*}
And finally, if $(\xi_a,\xi_b) = (-,-)$,
\begin{align*}
	\varphi(T_i (D))&= (-i)^{-w(a) -w(b) + w(a)-w(b) -1} f(D) \\
	                         &\phantom{{}= }\cdot X_{1}^{j-1} \wedge (a-c-b) \wedge X_{j+1}^{m-1} \wedge X_{m+1}^{m+w(b)} \wedge  c  \wedge X_{m+w(b) +1}^l \\
										       &= i(-i)^{-2w(b)}(-1)^{w(b)}  f(D) X_{1}^{j-1} \wedge c \wedge X_{j+1}^{m-1} \wedge (a-c-b) \wedge X_{m+1}^l\\
										       &= -i f(D) X_{1}^{j-1} \wedge (-a+c+b) \wedge X_{j+1}^{m-1} \wedge c \wedge X_{m+1}^l.
\end{align*}
\end{proof}
This shows that $\varphi$ is a homomorphism between the representation $\sigma_i \mapsto A^{-1} \diagramrep^{n,d}_A(\sigma_i)$ and the representation on $\Lambda^l H_1(\Sigma_g^m,\bbC)$ in the cases of Theorems~\ref{thmuligen}, and \ref{thmligengenyoung}. For Theorem~\ref{mainthm}, we define $\tilde{\varphi}(D)$ for $D \in V^{2n,0}$ in the following way: there is an $i$ such that there is an arc $a$ connecting $i$ to $n$. We define an element $\tilde D$ in $V^{n-1,1}$ by removing $a$, forgetting $n$, and connecting $i$ to $\infty$, and we define $\tilde{\phi} : V^{2n,0} \to \Lambda^g H_1(\Sigma_g,\bbC)$ by $\tilde{\varphi}(D) = \varphi(\tilde D)$, identifying $H_1(\Sigma_g,\bbC)$ with $H_1(\Sigma_g^1,\bbC)$. It is clear that the induced map $\tilde{\cdot} : V^{2n,0} \to V^{2n-1,1}$ is an isomorphism of vector spaces.
\begin{lem}
  \label{homomofilemmamainthm}
  The map $\tilde{\phi}$ is a homomorphism of representations.
\end{lem}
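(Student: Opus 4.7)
The map $\tilde\varphi$ factors as $\varphi \circ \widetilde{\,\cdot\,}$, with $\widetilde{\,\cdot\,}\colon V^{2n,0} \to V^{2n-1,1}$ the vector-space isomorphism defined in the text. My plan is to split the verification of equivariance according to the braid generator: the generators $\sigma_i$ with $i < 2n-1$ are reduced to Lemma~\ref{homomofilemma}, whereas $\sigma_{2n-1}$ requires a direct computation modeled on the proof of that same lemma.

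For $i\in \{1,\dots,2n-2\}$, the generator $\sigma_i$ acts only on strands $i$ and $i+1$, leaving the endpoint $2n$ untouched. Consequently $\widetilde{\,\cdot\,}$ intertwines the restriction to $B_{2n-1}\subset B_{2n}$ of the Jones action on $V^{2n,0}$ with the standard $B_{2n-1}$-action on $V^{2n-1,1}$. Applying Lemma~\ref{homomofilemma} on $V^{2n-1,1}$ with the curves $c_1=\gamma_0, c_2=\beta_1,\dots,c_{2n-2}=\beta_g$ then yields the required identity $\tilde\varphi(D + i T_i D) = t_{c_i}\tilde\varphi(D)$ for these $i$. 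Here one uses the identification $H_1(\Sigma_g,\bbC)\isom H_1(\Sigma_g^1,\bbC)$ induced by the inclusion $\Sigma_g^1\hookrightarrow \Sigma_g$, under which the first $2n-2$ curves in the $B_{2n}$-set-up agree with those in the $B_{2n-1}$-set-up.

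For $i=2n-1$, the operator $T_{2n-1}$ on $V^{2n,0}$ corresponds under $\widetilde{\,\cdot\,}$ to the operation on $V^{2n-1,1}$ that re-pairs the two arcs of $\tilde D$ incident at the top point $2n-1$ and the bottom point $\infty$. This places us in the same setup as the proof of Lemma~\ref{homomofilemma}, with the only caveat that one of the affected positions is now $\infty$ rather than a top point, and with the relevant curve being $c_{2n-1}=\gamma_g$, which intersects only $\beta_g=c_{2n-2}$ among the chain. I would therefore repeat the four-case analysis of that proof verbatim --- distinguishing whether $2n-1$ and $\infty$ are connected to a common arc in $\tilde D$ and, in the distinct-arc case, the signs of the intersections with $\gamma_g$ --- taking care to note that the arc at $\infty$ does not contribute a factor to $\varphi(\tilde D)$, since it is not top-to-top, whereas the arc at $2n-1$ does.

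The main obstacle in this last step is bookkeeping. Going through $\widetilde{\,\cdot\,}$ shifts the virtual point $\infty$ around in the natural ordering of $D_0$, so both the sign factor $f(D)$ and the order of the wedge product $\bigwedge_{e\in D_0} X_e$ change when passing from $\tilde D$ to $\widetilde{T_{2n-1}D}$; one has to check that these sign changes conspire with the Picard--Lefschetz correction from $t_{\gamma_g}$. A secondary and milder technical point is identifying $[\gamma_g]\in H_1(\Sigma_g,\bbC)$ as a specific class in $H_1(\Sigma_g^1,\bbC)$ under the iso induced by $\Sigma_g^1\hookrightarrow \Sigma_g$; this is done by isotoping $\gamma_g$ across the capping disk, and the result fixes the sign convention for the intersection of $\gamma_g$ with the other curves.
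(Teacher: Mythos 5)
Your reduction of the generators $\sigma_1,\dots,\sigma_{2n-2}$ to Lemma~\ref{homomofilemma} via the isomorphism $\widetilde{\,\cdot\,}\colon V^{2n,0}\to V^{2n-1,1}$ is exactly what the paper does. The gap is in your treatment of $\sigma_{2n-1}$. You propose to ``repeat the four-case analysis of Lemma~\ref{homomofilemma} verbatim'' with the curve $\gamma_g$, but that analysis does not transfer: in Lemma~\ref{homomofilemma} the diagram $T_iD$ contains an arc from $i$ to $i+1$ whose class is exactly $c_i$, so the Picard--Lefschetz correction $\pm c_i$ is matched \emph{locally} by an arc of $T_iD$. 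For $\sigma_{2n-1}$ this fails. If $2n$ is joined to $i$ and $2n-1$ is an endpoint of an arc $b$ in $D$, then $\widetilde{T_{2n-1}D}$ contains a new arc $d$ from $i$ to $b_0$ whose class is $X_d=\sum_{k=i}^{b_0-1}c_k$, and this is \emph{not} of the form $X_b\pm[\gamma_g]$: the class $[\gamma_g]$ is not supported near positions $2n-1,2n$ but satisfies the global relation $[\gamma_g]=-([\gamma_0]+\cdots+[\gamma_{g-1}])=-\sum_{k\,\mathrm{odd}}c_k$ in $H_1(\Sigma_g,\bbC)$. This relation is the engine of the computation, not the ``secondary and milder'' sign-convention issue you describe; without it $t_{\gamma_g}$ cannot even be evaluated on the image of $\varphi$, which lies in the exterior algebra of $\spn\{c_1,\dots,c_{2n-2}\}$.

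Moreover, matching the resulting correction term $\mp f(\tilde D)\,(\cdots\wedge\sum_{k\,\mathrm{odd}}c_k\wedge\cdots)$ with $i\,\varphi(\widetilde{T_{2n-1}D})$ requires truncating the global sum $\sum_{1\le k\le 2n-3,\ k\,\mathrm{odd}}c_k$ down to $\sum_{i\le k\le b_0,\ k\,\mathrm{odd}}c_k$ modulo the other wedge factors, and then recognizing the truncated sum as $X_d$ modulo the arcs nested inside $d$. Both steps rest on the paper's Lemma~\ref{lemmatilhomomorfibevis} (proved by induction on $w(a)$: for any arc $a$, $X_a\wedge\bigwedge_{e\prec a}X_e=\bigl(\sum_{k\equiv a_0\,(2)}c_k\bigr)\wedge\bigwedge_{e\prec a}X_e$), which is absent from your proposal. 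Your plan as written would therefore stall at the point of comparing the homological correction with the diagrammatic one; you need to add the telescoping lemma and make the relation $[\gamma_g]=-\sum_{k\,\mathrm{odd}}c_k$ the centerpiece of the $\sigma_{2n-1}$ computation rather than an afterthought.
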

In order to prove this, we need the following Lemma.

\begin{figure}
\begin{tikzpicture}[scale=0.25]

\newcount\ga
\ga=0
\foreach \x in {6,2,1,0,3,5,4}{
\draw (4*\the\ga,6) to[out=-90,in=-90] (2+4*\x,6);
\global\advance\ga by1
}

\draw[red,thick] (2,6) to[out=-90,in=-90] (12,6);
\draw[red,thick] (14,6) to[out=-90,in=-90] (16,6);
\draw[red,thick] (18,6) to[out=-90,in=-90] (24,6);

\foreach \x in {0,1,...,13}{
\node[draw,circle,inner sep=1pt,fill] at (2*\x,6) {};
}
\end{tikzpicture}
\caption{Here, the outermost arc is $a$ and the $b_k$'s have been coloured red. \label{fig:lemmatilhom}}
\end{figure}
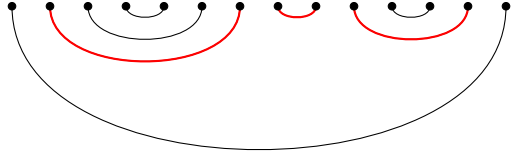

\begin{lemma}
\label{lemmatilhomomorfibevis}
If $D$ is a diagram and $a$ is an arc that connects $i$ to $j$, $i<j$, we have that
\[
	X_a \wedge \bigwedge _{e\prec a} X_e = \left( \sum_{\substack{i\leq k \leq j,\\k \equiv i \mod
2}} c_k  \right) \wedge \bigwedge _{e\prec a} X_e,
\]
where we say that $e\prec a$ if $a_0 < e_0 < e_1 < a_1$. 
\end{lemma}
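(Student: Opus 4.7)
My plan is to reformulate the wedge identity as a linear algebra statement: since any element of $\spn\{X_e : e \prec a\}$ vanishes when wedged with $\bigwedge_{e \prec a} X_e$, it suffices to show that
\[
  M := X_a - \sum_{\substack{i \leq k \leq j \\ k \equiv i \mod 2}} c_k
\]
lies in $\spn\{X_e : e \prec a\}$. Unpacking $X_a = c_i + c_{i+1} + \cdots + c_{j-1}$ and noting that, in the relevant setting where every interior point of $a$ is an endpoint of some arc in $D_0$ with its other endpoint also interior to $a$, the difference $j-i$ is necessarily odd, a direct computation shows that $M$ equals the sum of $c_l$ over the interior indices $l \in \{i+1,\dots,j-1\}$ satisfying $l \not\equiv i \mod 2$.

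Next I would establish the explicit expansion
\[
  M = \sum_{e \prec a} (-1)^{d(e)} X_e,
\]
where $d(e)$ denotes the depth of $e$ in the nested structure of arcs strictly inside $a$, with outermost arcs at depth $0$ and depth increasing by one each time an arc is enclosed by another. The identity can be verified by comparing the coefficient of each $c_l$ on both sides: for interior $l$, the coefficient on the right is $\sum (-1)^{d(e)}$ summed over those $e \prec a$ with $e_0 \leq l < e_1$, and since these $e$ form a nested chain, the alternating sum telescopes to either $0$ or $1$.

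The crux is then to show that this alternating sum equals $1$ precisely when $l \not\equiv i \mod 2$. I would argue this by a left-to-right sweep across the interior of $a$, starting at depth $0$ just to the right of the point $i$ and tracking the depth as we pass each interior point. Because every interior point is an endpoint of exactly one arc $e \prec a$, crossing such a point toggles the parity of the current depth; inductively, the parity of the depth in the gap immediately to the right of a point labelled $l$ matches the parity of $l - i$, which is exactly what is needed to match the coefficients appearing in $M$.

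The main obstacle is justifying this sweep, which implicitly requires every interior point of $a$ to belong to some arc $e \prec a$. This hypothesis is automatically satisfied in the intended application to $\tilde D \in V^{2n-1,1}$ arising in the preceding construction, where the unique point connected to $\infty$ lies outside the interior of every top-to-top arc by non-crossing; with the hypothesis in place, the formula for $M$ closes out the proof, since replacing $X_a$ by $X_a - M$ in the first factor of the wedge does not alter the product.
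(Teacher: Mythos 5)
Your proof is correct, and it takes a genuinely different (if closely related) route from the paper's. The paper disposes of the lemma with a two-line induction on $w(a)$: in the inductive step one applies the hypothesis to the outermost arcs $b_1,\dots,b_m$ nested directly inside $a$, which tile the interior points $\{i+1,\dots,j-1\}$, and each of the resulting sums $\sum_{i_k\le l\le j_k,\ l\equiv i_k} c_l$ dies upon wedging because, by the inductive hypothesis, it differs from $X_{b_k}$ by an element of $\spn\{X_e : e\prec b_k\}$. You instead unroll this recursion into the explicit linear identity $M=\sum_{e\prec a}(-1)^{d(e)}X_e$ and verify it coefficient by coefficient via the depth-parity sweep. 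Your version is marginally stronger --- it exhibits the correction term as an exact element of $\spn\{X_e : e\prec a\}$ rather than only proving equality after wedging --- and it makes explicit the parity bookkeeping that the paper's induction leaves implicit (that $j-i$ is odd, and that the number of enclosing arcs changes by exactly one at each interior point); the price is a longer write-up. Both arguments rest on the same structural input, which you correctly isolate: by non-crossingness every point strictly between $i$ and $j$ is paired with another such point. Note that this holds for every top-to-top arc of every basis diagram of $V^{n,d}$ (a point interior to $a$ joined to a bottom point would force a crossing with $a$), so the lemma needs no restriction to the $\tilde D\in V^{2n-1,1}$ of the application; your hedge at the end is unnecessary but harmless.
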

\begin{proof}
	The proof is by induction on $w(a)$. If $w(a)=0$, the claim is clear. 
Otherwise we use the induction hypothesis on the arcs $b_k$, $k=1,2,\dots, 
m$, connecting $(i_k,j_k)$, where $i_1=i+1$, $j_m = j-1$, and $j_k+1 = i_{k+1}$ for $k<m$. See Figure~\ref{fig:lemmatilhom}.
\end{proof}

Up until this point, we have not used the possible description of the curves $c_i$ in terms of the curves $\beta_i$ and $\gamma_i$ of Figure~\ref{flader}, but for the proof of Lemma~\ref{homomofilemmamainthm} below, we will need that
\[
  c_{n-1} = - \sum_{\substack{1\leq k \leq n-3,\\k \equiv 1 \mod 2}} c_k
\]
in $H_1(\Sigma_g,\bbC)$, which will be guaranteed by considering the concrete curves.
\begin{proof}[Proof of Lemma~\ref{homomofilemmamainthm}]
If we consider only the action of $B_{n-1} \subseteq B_n$, then $\tilde{\phi}$ defines a homomorphism by the above, so we only have to check the equivariance of the action of $\sigma_{n-1}$. If $n-1$ and $n$ are connected, then $\beta_g$ does not appear in $\varphi(\tilde D)$, and the action of $\sigma_{n-1}$ on homology is trivial, just as $T_n(D) = 0$.

If $n$ is connected to $i<n-1$ by the $j$'th arc, and $n-1$ is an end point of the $m$'th arc, which we denote by $b$, then
\begin{align*}
	(t_{c_{n-1}})_* (\varphi(\tilde D)) &= f(\tilde D) X_1^{j-1} \wedge X_{j+1}^{m-1} \wedge(X_b - c_{n-1})\wedge  X_{m+1}^l \\ 
																								      & = \varphi(\tilde D) - f(\tilde D)X_1^{j-1} \wedge X_{j+1}^{m-1} \wedge c_{n-1} \wedge X_{m+1}^l \\
														  &= \varphi(\tilde D) + f(\tilde D)X_1^{j-1} \wedge X_{j+1}^{m-1} \wedge \left( \sum_{\substack{1\leq k \leq n-3,\\k \equiv 1 \mod 
2}} c_k  \right)  \wedge X_{m+1}^l \\
&= \varphi(\tilde D) + f(\tilde D)X_1^{j-1} \wedge X_{j+1}^{m-1} \wedge \left( \sum_{\substack{ i \leq k \leq b_0,\\k \equiv 1 \mod 
2}} c_k  \right)  \wedge X_{m+1}^l.
\end{align*}

Here we used Lemma~\ref{lemmatilhomomorfibevis} in last equality, applying it to the arcs left of $i$ that are not contained between the end points of any other arc, and on the arcs between the end points of $b$. Letting $p$ be the number of arcs to the right of $i$ in $\tilde D$, we find, where we denote by $d$ an arc connecting $i$ with $b_0$, that
\begin{align*}
	\varphi( \widetilde {T_{n-1}(D)} ) &= (-i)^{p-w(b)+p-w(b)-1} f(\tilde D) X_{1}^{j-1} \wedge  X_d \wedge X_{j+1}^{m-1} \wedge X_{m+1}^l  \\
 &= (-i)^{p-w(b)+p-w(b)-1} f(\tilde D)  X_1^{j-1} \wedge \left( \sum_{\substack{ i\leq k \leq b_0,\\k \equiv 1 \mod 
2}} c_k  \right)  \wedge X_{j+1}^{m-1} \wedge X_{m+1}^l \\
&= (-1)^{p-w(b)+1} (-i)^{p-w(b)+p-w(b)-1} f(\tilde D)  X_1^{j-1} \wedge X_{j+1}^{m-1} \wedge \left( \sum_{\substack{ i\leq k \leq b_0,\\k \equiv 1 \mod 
2}} c_k  \right)  \wedge X_{m+1}^l \\
&= -i f(\tilde D)  X_1^{j-1} \wedge X_{j+1}^{m-1} \wedge \left( \sum_{\substack{ i\leq k \leq b_0,\\k \equiv 1 \mod 
2}} c_k  \right)  \wedge X_{m+1}^l. 
\end{align*}
This shows the equivariance of $\tilde{\phi}$.
\end{proof}

\subsection{Injectivity}
In the case of Theorem~\ref{thmuligen} we know that the dimension of $V^{n,d}$ agrees with the dimension of $\Lambda^l H_1(\Sigma_g^1,\bbC) / \omega \wedge \Lambda^{l-2} H_1(\Sigma_g^1,\bbC)$, so we can show that $\varphi$ -- which here denotes the map to the quotient -- is an isomorphism by showing that it is surjective, which we will do next. This also shows the injectivity claimed in Theorem~\ref{mainthm}, by construction of $\tilde{\phi}$.

\begin{proof}[Proof of Theorems~\ref{mainthm} and \ref{thmuligen}]
	Denote by $\bar \varphi$ the composition of $\varphi$ with the projection to the quotient $\Lambda^l H_1(\Sigma_g^1,\bbC)/\omega\wedge \Lambda^{l-2} H_1(\Sigma_g^1, \bbC)$. We just need to show that $\bar \varphi$ is injective, as we know that the spaces have the same dimension. Assume that $v$ is in the kernel of $\bar \varphi$; then 
	\[
		e_i v = T_i v = -i( v+  i T_i v) + iv,
	\]
	so
	\[
		\bar\varphi(e_i v) = -i t_{c_i} \bar\varphi(v) + i\bar \varphi(v) = 0,
	\]
	which shows that $\ker \bar \varphi$ is a $\TL_{n}(\exp(-\pi i/4))$-subrepresentation of $V^{n,d}$. But it follows from Corollary~4.8 of \cite{RSA} that the representation on $V^{n,d}$ is irreducible (as noted in the remarks on the case $\beta = 0$ following their Corollary), so $\ker \bar \varphi$ is either $0$ or $V^{n,d}$. It is easy to check that the latter is not the case, and so we obtain the Theorem.
\end{proof}

\begin{remark}
We remark that the injectivity above can also be shown by explicitly constructing a basis 
$a_1, a_2, \dots , a_i, b_1, b_2, \dots, b_j$ of $\Lambda^l H_1(\Sigma,\bbC)$ in such a way that $\varphi$ surjects onto $\spn\{a_k\}$, $\omega\wedge \Lambda^{l-2} H_1(\Sigma, \bbC) = \spn \{b_k\}$, and $\spn\{a_k\}$ has the right dimension. 
\end{remark}

\begin{proof}[Proof of Theorem~\ref{thmligengenyoung}]
	This follows from Theorem~\ref{thmuligen} in the following way: we can define a map $h_1 :  V^{n,d} \to V^{n-1,d-1}$ that maps $D$ to $0$ if $n$ is not connected to $\infty$, and otherwise to the diagram given by removing the arc going from $n$ to $\infty$; here we let $V^{n-1,-1} = V^{n-1,n+1} = \{0\}$. Likewise, we define a map $h_2 : V^{n,d} \to V^{n-1,d+1}$ that maps a diagram to $0$ if $n$ is connected to $\infty$, and otherwise to the diagram where the point connected to $n$ is now connected to $\infty$. For a diagram of the latter type, we define $g(D) = \alpha X_e$, where $\alpha$ is a scalar depending on $D$, and $e$ is an arc such that $\varphi(D) =g(D)\wedge  \varphi(h_2(D))$. It is clear that $g(D) = \alpha \sum _{i=k}^{n-1} c_i$ for some $k$, and by looking at the block decomposition of $\varphi$ with respect to these two kinds of diagrams and with respect to the two subspaces $c_{n-1}\wedge \Lambda^{l-1} \spn\{c_i \mid i=1, 2, \dots, n-2 \}$ and its orthogonal complement (using wedge products of $c_i$'s as an orthogonal basis), we see that $\varphi$ is injective, as it has the form
	\[
	\left(\!\begin{array}{c|c}
			\varphi^{n-1,d-1}\circ h_1 & * \\
	\hline
	0 & \alpha c_{n-1} \wedge \varphi^{n-1,d+1} \circ h_2
\end{array}\!\right),
	\]
	where the diagonal blocks are injective by Theorem~\ref{thmuligen}; here, we interpret $\varphi^{n-1,-1}$ and $\phi^{n-1,n+1}$ as the maps between two $0$-dimensional spaces. 
\end{proof}

\section{The AMU conjecture}
\label{amusect}
We turn now to the proofs of Theorems~\ref{AMUulige} and~\ref{AMUlige}. As will be clear, our argument is a generalization of the corresponding result of \cite{AMU}.

Recall that these are concerned with the genus $0$ quantum $\SU(N)$-representations $\rho_{N,k}^\lambda$, depending on a level $k$ and a Young diagram $\lambda \in \Lambda_{N,k}$ with at most $2$ rows. We have taken as our definition of $\rho_{N,k}^\lambda$ that based on conformal field theory, so as to directly relate the quantum representations to those discussed in the previous sections, through the following result of Kanie; see \cite{KanieCFTandBG} and \cite{UenoBook}. We refer to \cite[Sects.~4--7]{AU4} for a complete description of the relevant skein theoretically defined quantum representations and the correspondence to the ones at hand.

\begin{thm}[Kanie]
  \label{kaniethm}
	The representation $\pi_{q}^{n,d}$ of $B_n$, with $q=\exp(\frac {2\pi i}{k+N})$, is isomorphic to the quantum $SU(N)$-representation $\rho_{N,k}^\lambda$, where $\lambda$ and $d$ are related as in Section~\ref{quantumrepsintro}.
\end{thm}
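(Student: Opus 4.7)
The plan is to identify both representations as the monodromy of the Knizhnik--Zamolodchikov connection on a bundle of conformal blocks over the configuration space of $n$ points on the sphere. This is exactly the strategy carried out in \cite{KanieCFTandBG} and \cite{UenoBook}; what follows sketches how I would assemble it.

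First, I would unpack the definition of $\rho_{N,k}^\lambda$ adopted here. The representation space is, by construction, the space of conformal blocks for the $\widehat{\mathfrak{sl}(N)}_k$ WZW model on $S^2$ with $n$ insertions of the defining representation at $1,\dots,n$ and one insertion labelled $\lambda^\dagger$ at $\infty$; the action of the braid $\sigma_i \in B_n \subset \MCG(0,n)_\infty$ is then the half-monodromy of the KZ connection that exchanges the $i$-th and $(i+1)$-st punctures (see \cite{AU2}, \cite{UenoBook}).

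Second, I would invoke the Drinfeld--Kohno theorem to identify this monodromy with the braid group action defined by the $R$-matrices of $U_q(\mathfrak{sl}(N))$ on $V^{\otimes n}$, where $V$ is the defining representation and $q$ is the root of unity produced by the $1/(k+N)$ normalization in front of the Casimir term in the KZ connection. The condition that $\infty$ be labelled by $\lambda^\dagger$ selects the $\lambda$-isotypic component of $V^{\otimes n}$ under $U_q(\mathfrak{sl}(N))$, so the braid group action descends to an action on this multiplicity space. Third, quantum Schur--Weyl duality identifies this multiplicity space with the irreducible Iwahori--Hecke module $S^\lambda$ over $H_n(q)$, on which $B_n$ acts through the standard map $\sigma_i \mapsto T_i$; for two-row $\lambda$, this is precisely the representation $\pi_q^{n,d}$ as defined via \cite[(2.3)]{Wen}, with $d$ determined by $\lambda$ as in Section~\ref{quantumrepsintro}.

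The main obstacle will be making the parameter match exactly: conventions for $q$ in the $R$-matrix literature differ between sources by factors of $-1$, by squares, and by overall phases coming from the ribbon element, so pinning down that the final $q$ equals $\exp(2\pi i/(k+N))$ -- rather than, say, its square root or its negative -- requires carefully tracking the eigenvalues of the $R$-matrix on the symmetric and antisymmetric summands of $V \otimes V$, together with the scaling of $\sigma_i$ used in \cite{Wen}. This bookkeeping, rather than any conceptual hurdle, is what makes the statement delicate; once it is resolved, the equivalence of representations follows from the irreducibility of both sides combined with a dimension count.
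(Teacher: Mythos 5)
The paper does not actually prove Theorem~\ref{kaniethm}: it is imported as a theorem of Kanie, with pointers to \cite{KanieCFTandBG} and \cite{UenoBook} for the proof and to \cite[Sects.~4--7]{AU4} for the dictionary between the CFT-based quantum representations used here and their skein-theoretic counterparts. So there is no in-paper argument to compare yours against; what you have written is a reconstruction of the cited proof, and its overall architecture (conformal blocks with insertions $\boxdiagram,\dots,\boxdiagram,\lambda^\dagger$, KZ monodromy, identification of the braiding with a Hecke-algebra action, Schur--Weyl to land on Wenzl's $\pi_q^{n,d}$) is indeed how the result is established in the literature. Two caveats. First, Kanie's route, following Tsuchiya--Kanie, does not go through the Drinfeld--Kohno theorem: one computes the exchange matrices of the vertex operators directly from the local solutions of the KZ equation on the two fusion channels of $V\otimes V$, reads off the Hecke quadratic relation with parameter $q=\exp(2\pi i/(k+N))$, and then identifies the sector determined by the label $\lambda^\dagger$ at $\infty$ with the corresponding irreducible Hecke module; your quantum-group detour is legitimate but is a different (and heavier) technology. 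Second, and this is the one genuine gap: at $q=\exp(2\pi i/(k+N))$ the Hecke algebra $H_n(q)$ fails to be semisimple when $k+N\le n$, the level-$k$ conformal block space is the Verlinde-truncated subquotient of the generic multiplicity space rather than the full $\Hom$-space your Schur--Weyl step produces, and your closing move of ``irreducibility of both sides plus a dimension count'' breaks down there. The statement is only meaningful because $\pi_q^{n,d}$ is defined for $n$-regular $q$, i.e.\ $k+N>n$, in which regime the truncation is invisible for the given $\lambda$; you should state that hypothesis explicitly rather than filing the difficulty under parameter bookkeeping, since the bookkeeping of $R$-matrix eigenvalues versus the normalization in \cite[(2.3)]{Wen} is the routine part and the root-of-unity truncation is where the content of Kanie's theorem lies.
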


\begin{lem}
  \label{kvanttilgenlem}
  Let $X_n$ denote the set of primitive $n$'th roots of unity. Then for every $z \in \U(1)$, there exist $z_n \in X_n$ such that $\lim_{n \to \infty} z_n = z$.
\end{lem}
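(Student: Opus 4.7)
The plan is to write $z = e^{2\pi i \theta}$ for some $\theta \in \bbR$ and produce integers $k_n$ coprime to $n$ such that $k_n/n \to \theta \pmod 1$; the corresponding primitive roots $z_n := e^{2\pi i k_n/n} \in X_n$ will then converge to $z$ via the elementary bound $|z_n - z| \le 2\pi \cdot \dist(k_n/n - \theta, \bbZ)$.

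To construct such $k_n$, I would reduce the claim to the following purely combinatorial statement: the finite set $\{k/n \bmod 1 : 1 \le k \le n,\ \gcd(k,n) = 1\} \subset \bbR/\bbZ$ becomes Hausdorff-dense in $\bbR/\bbZ$ as $n \to \infty$, or equivalently, the maximum gap between consecutive elements tends to zero. That maximum gap is precisely $g(n)/n$, where $g(n)$ is the Jacobsthal function (the largest gap between consecutive integers coprime to $n$). A classical sieve-theoretic result asserts $g(n) = o(n)$ -- in fact Iwaniec's bound gives $g(n) = O((\log n)^2)$ -- from which the lemma follows by choosing, for each $n$, the integer $k_n$ coprime to $n$ that minimizes $\dist(k_n/n - \theta, \bbZ)$.

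The main obstacle in a self-contained write-up is therefore this Jacobsthal input. However, if the statement is only needed along a subsequence $n_j \to \infty$ -- which is all the applications in Section~\ref{amusect} seem to require -- then this machinery can be avoided altogether by restricting to primes: if $n_j = p_j$ is prime, then $X_{p_j}$ consists of the $p_j - 1$ equally spaced points $e^{2\pi i k/p_j}$ for $1 \le k \le p_j - 1$, and choosing $k_j \in \{1,\dots,p_j-1\}$ closest to $p_j\theta$ gives $\dist(k_j/p_j - \theta,\bbZ) \le 1/(2p_j) \to 0$, hence $z_{p_j} \to z$. I would present the proof in this prime-subsequence form if permissible, and otherwise cite the Jacobsthal bound.
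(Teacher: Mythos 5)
Your main argument is exactly the paper's proof: both reduce the claim to the observation that the largest gap between consecutive integers coprime to $n$ is the Jacobsthal function $j(n)$, and both invoke Iwaniec's bound $j(n) = O(\log^2 n)$ to get $j(n)/n \to 0$ and hence a coprime residue $k_n$ with $k_n/n \to \theta$. One caveat on your proposed shortcut: the application in Lemma~\ref{korderlem} requires primitive $4(k+N)$'th roots of unity for \emph{all} sufficiently large $k$ (to conclude infinite order for all but finitely many levels), and $4(k+N)$ is never prime, so the prime-subsequence variant is not permissible there and the Jacobsthal input cannot be avoided.
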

\begin{rem}
  For $z = -1$ -- which in fact is the only case we will need in the proof of Theorem~\ref{AMUulige} -- this is \cite[Lem.~5.1]{AMU}, in which the sequence $z_n$ is constructed explicitly. We include this more general case for use in Appendix~\ref{apptorelli} below.
\end{rem}
\begin{proof}[Proof of Lemma~\ref{kvanttilgenlem}]
  Write $z = \exp(2\pi i \alpha)$, $\alpha \in [0,1)$, and let
	\begin{align*}
	  Y_n = \{ m \mid 0 < m < n,\,\, \gcd(m,n) = 1\}.
	\end{align*}	
	The largest gap between two consecutive points of $Y_n$ is bounded from above by $j(n)$, the ordinary Jacobsthal function at $n$. The result of Iwaniec \cite{Iwa} allows us to bound the size of this gap as $j(n) = O(\log^2(n))$. Thus, since $\log^2(n)/n \to 0$ as $n \to \infty$, we may choose $\alpha_n \in Y_n/n$ such that $\alpha_n - \alpha \to 0$ for $n \to \infty$. Letting $z_n = \exp(2\pi i \alpha_n) \in X_n$, we obtain the result since $z_n z^{-1} \to 1$ as $n \to \infty$.
\end{proof}
For $\phi \in \MCG(0,n)_\infty$, define $\sr_d(\phi) : \bbR \to \bbR_{> 0}$ by
\begin{align*}
	\sr_d(\phi)(x) = \sr(\diagramrep^{n,d}_{\exp(-\pi i x/4)}(\phi)),
\end{align*}
for $d \in \{0,\dots,n\}$ with $d \equiv n \mod 2$, where $\sr$ denotes the spectral radius of a linear map.
\begin{lem}
  \label{korderlem}
  Let $\phi \in \MCG(0,n)_\infty$. If $\sr_d(\phi)(x_0) > 1$ for some $x_0 \in[0,1]$, then there exists $k_0 \in \bbN$ such that the order of $\rho^{\lambda}_{N,k}(\phi)$ is infinite for all $k > k_0$, where $\lambda$ and $d$ are related as in Section~\ref{quantumrepsintro}.
\end{lem}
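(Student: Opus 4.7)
The plan is to combine continuity of the spectral radius in the parameter $A$ with a Galois argument that trades the specific primitive $(k+N)$-th root of unity $q_k = \exp(2\pi i/(k+N))$ appearing in Kanie's theorem for a different primitive root of unity chosen to lie near the point where the hypothesis gives us $\sr_d(\phi) > 1$.

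First I would observe that the entries of $\diagramrep_A^{n,d}(\phi)$ are Laurent polynomials in $A$, so they depend continuously on $x$ when $A = \exp(-\pi i x/4)$. Since eigenvalues vary continuously with a matrix (as an unordered set), $\sr_d(\phi)(\cdot)$ is a continuous function of $x$, and the hypothesis gives an open interval $U \ni x_0$ on which $\sr_d(\phi)(x) > 1$. The twist $\sigma_i \mapsto A$ that relates $\diagramrep_A^{n,d}$ to $\pi_q^{n,d}$ is by a scalar of modulus $1$ for $|A|=1$, so the spectral radius of $\pi_{\exp(-\pi i x)}^{n,d}(\phi)$ also exceeds $1$ on $U$.

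Next I would use the Jacobsthal-style density argument from the proof of Lemma~\ref{kvanttilgenlem}: for all sufficiently large $k$, there exists an integer $m_k$ with $\gcd(m_k,k+N)=1$ such that $q_k^{\ast} = \exp(2\pi i m_k/(k+N))$ lies in the arc $\{\exp(-\pi i x) : x \in U\}$. For such $k$ the matrix $\pi_{q_k^{\ast}}^{n,d}(\phi)$ has spectral radius $>1$, hence an eigenvalue of modulus strictly greater than $1$, and therefore has infinite order (no power of it can be the identity, whose spectral radius equals $1$).

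Finally I would invoke Galois conjugation. By Theorem~\ref{kaniethm}, $\rho_{N,k}^\lambda(\phi)$ is conjugate to $\pi_{q_k}^{n,d}(\phi)$. Because the matrix entries of $\pi_q^{n,d}(\phi)$ (in a Young-tableau basis, after incorporating the twist) are Laurent polynomials in $q$, the Galois automorphism $\zeta_{k+N} \mapsto \zeta_{k+N}^{m_k}$ of the appropriate cyclotomic field carries $\pi_{q_k}^{n,d}(\phi)$ entry-by-entry to $\pi_{q_k^{\ast}}^{n,d}(\phi)$. Finite order is preserved under any field automorphism (the relation $M^r = I$ is algebraic), so infinite order of $\pi_{q_k^{\ast}}^{n,d}(\phi)$ forces infinite order of $\pi_{q_k}^{n,d}(\phi)$, proving the lemma.

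The most delicate point, and the one I would spend the most care on, is the bookkeeping in the Galois step, because $\diagramrep_A^{n,d}$ naturally involves a fourth root $A$ of $q$. Two ways to handle this cleanly: either work directly with the $q$-representation $\pi_q^{n,d}$ in a basis where the entries lie in $\bbZ[q,q^{-1}]$ and note that Galois acts equivariantly on both sides of Kanie's isomorphism, or pass to $\bbQ(\zeta_{4(k+N)})$ and restrict to $m_k$ coprime to $4(k+N)$, for which the Jacobsthal estimate still produces the required density on the circle. Either way the finite-order argument transfers.
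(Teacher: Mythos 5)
Your proposal is correct and follows essentially the same route as the paper: continuity of the spectral radius, the Jacobsthal-type density of primitive $4(k+N)$'th roots of unity (the paper's Lemma~\ref{kvanttilgenlem}), the fact that an eigenvalue of modulus greater than $1$ forces infinite order, and the Galois/conjugation step showing the order is the same at all primitive roots (which the paper asserts in one line as ``independent of $l$'' and you spell out, correctly flagging that one should work in $\bbQ(\zeta_{4(k+N)})$ with the entries of $\diagramrep_A^{n,d}$, which lie in $\bbZ[A,A^{-1}]$, rather than in $q$). No gaps.
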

\begin{proof}
  It follows from Theorem~\ref{kaniethm} and the relations described in Section~\ref{jonesdefsect} that it suffices to show that for some $k_0$, the order of $\diagramrep_A^{n,d}(\phi)$ is infinite for $q = A^4 = \exp(2 \pi i/(k+N))$ for all $k$ with $k > k_0$. On the other hand, the order of $\diagramrep_A^{n,d}(\phi)$ at a general primitive $4(k+N)$'th root of unity $ A = \exp(\frac{2\pi i l}{4(k+N)})$ is independent of $l$.

	Now, apply Lemma~\ref{kvanttilgenlem} to $q = \exp(-\pi i x_0)$ to obtain primitive $4(k+N)$'th roots of unity $A_k$ with $A_k^4 \to q$. Since $\sr_d(\phi)$ is continuous, $\sr(\diagramrep^{n,d}_{A_k}(\phi)) > 1$, for all sufficiently large $k$, and a linear map having an eigenvalue of absolute value greater than $1$ necessarily has infinite order.
\end{proof}

To show that a given pseudo-Anosov $\phi \in \MCG(0,n)_\infty$ has infinite order for all but finitely many levels, it thus suffices to find $x_0$ as above. As we will see below, by Theorems~\ref{thmuligen} and \ref{thmligengenyoung}, this is possible for those pseudo-Anosovs whose stretch factors are given by their action on the homology of the double cover. In case $d = n-2$, the claims will follow immediately from Lemma~\ref{korderlem} but in general, we will need the following remarks on the actions of surface diffeomorphisms on wedge products of homology.

\begin{remark}
  \label{homologibem}
	Let $f_*$ be the action on $H_1(\Sigma, \bbC)$, where $\Sigma$ is a surface of genus $g$ with $0$ or $1$ boundary components, induced by a diffeomorphism $f$. Let $\lambda_1, \dots. \lambda_{2g}$ be the diagonal entries in the Jordan normal form of the matrix for $f_*$. Then the action of $f$ on $\Lambda^l H_1(\Sigma, \setC)$ will have an eigenvector of eigenvalue $\prod_{i\in I} \lambda_i$ for any subset $I \subseteq \{1:2g\}$ of size $l$, given by wedging together the corresponding vectors in the Jordan normal form, in such a way that a non-eigenvector is only included if all the preceding vectors in its block are also included. As all of these eigenvectors are of the form $\bigwedge_{i=1}^l v_i$, they can not be in the subspace $\omega \wedge \Lambda ^{l-2} H_1(\Sigma, \bbC)$ if $l\leq g$, and therefore they define eigenvectors of the same eigenvalue in the quotient $\Lambda^{l} H_1(\Sigma, \bbC)/ \omega \wedge \Lambda^{l-2} H_1(\Sigma, \bbC)$.
	
	Assume now that the action on $H_1(\Sigma,\bbC)$ has an eigenvalue with absolute value strictly greater than $1$. The action is symplectic, so the eigenvalues come in pairs $\lambda,\lambda^{-1}$, and there must be at least $g$ columns in the Jordan normal form having diagonal entry with absolute value at least $1$. Therefore the action on $\Lambda^l H_1(\Sigma,\bbC)$ has an eigenvalue with absolute value greater than $1$ for $l\leq g$, and by the above considerations, so does the action on $\Lambda^{l} H_1(\Sigma, \bbC)/ \omega \wedge \Lambda^{l-2} H_1(\Sigma, \bbC)$.

\end{remark}
\begin{prop}
  \label{uendeligordenprop}
  Let $f \in \MCG(0,n)_\infty$, and assume that the action $\hat{f}_*$ of $\hat{f} = \Psi(f)$ on $H_1(\Sigma_g^m)$ has spectral radius strictly greater than $1$. Then the order of $\rho^{\lambda}_{N,k}(f)$ is infinite for all but finitely many levels $k$.
\end{prop}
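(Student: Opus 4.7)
The plan is to apply Lemma~\ref{korderlem} at $x_0 = 1$, corresponding to $A = \exp(-\pi i/4)$ and $q = A^4 = -1$. It will then suffice to exhibit an eigenvalue of $\diagramrep^{n,d}_A(f)$ of absolute value strictly greater than $1$, where $d$ corresponds to $\lambda$ as in Section~\ref{quantumrepsintro}.

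First, I would invoke whichever of Theorems~\ref{mainthm}, \ref{thmuligen}, or \ref{thmligengenyoung} applies, based on the parity of $n$ and the value of $d$. Since passing from $\diagramrep^{n,d}_A$ to $\pi^{n,d}_{-1}$ only multiplies every eigenvalue by a fourth root of unity, this operation does not affect spectral radii. Consequently, the spectral radius of $\diagramrep^{n,d}_A(f)$ equals that of $\hat f_* = \Psi(f)_*$ acting on the appropriate quotient of (or subspace inside) $\Lambda^l H_1(\Sigma_g^m,\bbC)$, where $l$ is the exterior power specified in the relevant theorem.

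Second, I would appeal to Remark~\ref{homologibem}: since $\hat f_*$ is symplectic and has spectral radius strictly greater than $1$ on $H_1(\Sigma_g^m,\bbC)$, its induced action on $\Lambda^l H_1(\Sigma_g^m,\bbC)$ and on the quotient $\Lambda^l H_1/\omega\wedge\Lambda^{l-2}H_1$ admits an eigenvector with eigenvalue of absolute value greater than $1$, whenever $l \le g$. A short check confirms $l \le g$ in each applicable setting: $l = g$ in Theorem~\ref{mainthm}, $l = (2n+1-d)/2 \le n = g$ in Theorem~\ref{thmuligen} (using $d \ge 1$), and $l = (2n-d)/2 \le n-1 = g$ in Theorem~\ref{thmligengenyoung} (using $d \ge 2$, with the remaining case $d = 0$ absorbed into Theorem~\ref{mainthm}).

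The main obstacle is the setting of Theorem~\ref{thmligengenyoung}, in which $\pi^{2n,d}_q$ is realized only as a \emph{subrepresentation} of $\hat\rho^{g,l}_{\hom}$, so one must ensure that the eigenvector supplied by Remark~\ref{homologibem} actually lies in the image of the intertwiner $\varphi$. I expect to resolve this by leveraging the block-triangular description of $\varphi$ established in the proof of Theorem~\ref{thmligengenyoung}, whose diagonal blocks are governed by the intertwiners of Theorem~\ref{thmuligen} on $\Sigma_{n-1}^1$; an eigenvector of $\hat f_*$ with large eigenvalue produced there lifts to an eigenvector in the image of $\varphi$ and witnesses the required spectral bound. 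Once the spectral radius is controlled, Lemma~\ref{korderlem} immediately yields the proposition.
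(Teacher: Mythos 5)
Your overall strategy --- specialize at $x_0=1$ via Lemma~\ref{korderlem}, transfer the spectral radius through the intertwiners of Theorems~\ref{mainthm}--\ref{thmligengenyoung}, and extract a large eigenvalue from Remark~\ref{homologibem} --- matches the paper, and your treatment of the $d=n-2$ and odd-$n$ cases is essentially the paper's. You also correctly isolate the real difficulty: for $n$ even and $d<n-2$, the representation $\diagramrep^{n,d}$ sits only as a subrepresentation inside $\Lambda^l H_1(\Sigma_g^2,\bbC)$, and one must show that $\Im(\varphi)$ actually contains an eigenvector of eigenvalue with absolute value greater than $1$.

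However, your proposed fix for that case does not work as stated. The block-triangular form of $\varphi$ from the proof of Theorem~\ref{thmligengenyoung} is taken with respect to a decomposition of the \emph{target} $\Lambda^l H_1(\Sigma_g^2,\bbC)$ into $c_{n-1}\wedge\Lambda^{l-1}\spn\{c_i\}$ and a complement; that decomposition is not preserved by $\hat f_*$ (it was introduced only to prove injectivity of $\varphi$), and its diagonal blocks intertwine only the $B_{n-1}$-action, so the subspaces $\Im(\varphi^{n-1,d\pm1}\circ h_{1,2})$ are not $f$-invariant and cannot directly supply $f$-eigenvectors in $\Im(\varphi)$. The paper instead glues a pair of pants to pass to $\Sigma_{g+1}^1$, realizes $V^{n,d}$ as one block of the $f$-block-triangular decomposition $V^{n+1,d+1}=V^{n,d}\oplus V^{n,d+2}$, and then argues that the large eigenvalues of $f$ on $V^{n+1,d+1}$ (known from the odd case) cannot all be absorbed by the $V^{n,d+2}$ summand: for $l\le m$ because that summand corresponds to $(l-1)$-fold wedges and hence has strictly smaller top eigenvalue, and for $l>m$ by the explicit multiplicity count \eqref{binomiallighed}, proved by induction on $l$. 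Some argument of this kind --- ruling out that every eigenvector of large eigenvalue lies in the complement of $\Im(\varphi)$ --- is genuinely needed and is missing from your proposal. (A smaller point: Remark~\ref{homologibem} is stated for surfaces with at most one boundary component, so applying it directly to $H_1(\Sigma_g^2,\bbC)$, which has odd rank and degenerate intersection form, also requires the same pair-of-pants detour.)
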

\begin{proof}
  As noticed above, this follows immediately from our main theorems in case $d = n-2$, by application of Lemma~\ref{korderlem} with $x_0$ = 1, since $\sr_d(f)(-1) = \sr(\hat{f}_*)$.
  
  For general $d$, we also need to ensure that an appropriate eigenvector -- of eigenvalue with absolute value strictly greater than $1$ -- is actually contained in the image of the morphism of representations. This, on the other hand, is guaranteed by Remark~\ref{homologibem} in case $n$ is odd.
  
Assume now that $n$ is even, $d < n-2$. Since $\hat{f}$ preserves the boundaries pointwise, it defines a diffeomorphism of $\Sigma_{g+1}^1$, obtained by gluing to $\Sigma_g^2$ a pair of pants. Denote by $\iota$ the induced map on (wedge products of) homology. On the level of diagrams, this corresponds to the inclusion $V^{n,d} \hookrightarrow V^{n+1,d+1}$ obtained by adding to a diagram the point $n+1$ and connecting it to $\infty$. This of course corresponds to the decomposition $V^{n+1,d+1} = V^{n,d} \oplus V^{n,d+2}$ as a vector space, as described in the proof of Theorem~\ref{thmligengenyoung} (but note that $n+1$ is now odd); that is, $V^{n,d+2}$ is spanned by diagrams where $n+1$ is not connected to $\infty$. Now, even though the action of $f$ does not preserve the decomposition, it is clearly block triangular.

With these identifications, we have a diagram

  \centerline{\xymatrix{
    V^{n,d} \ar[r] \ar[d]_-{\phi^{n,d}} & V^{n+1,d+1} \ar[d]^-{\phi^{n+1,d+1}}&\\
    \Lambda^l H_1(\Sigma_g^2,\bbC) \ar[r]^-{\iota} & \Lambda^l H_1(\Sigma_{g+1}^1,\bbC) \ar[r]& \Lambda^l H_1(\Sigma_{g+1}^1,\bbC)/\omega \wedge \Lambda^{l-2} H_1(\Sigma_{g+1}^1,\bbC),
  }}
which is commutative up to a power of $-i$. We wish to show that the action of $f$ on $V^{n,d} \subseteq V^{n+1,d+1}$ contains eigenvectors of the appropriate eigenvalues. Suppose that the eigenvalues of the action of $f_*$ on $H_1(\Sigma_g^2,\bbC)$, counted with algebraic multiplicity, have absolute values 	
\[
	(x_1,\dots,x_m,1,1,\dots,1,x_m^{-1},\dots,x_1^{-1})
\]
with $x_i > 1$ for all $i$ so that as before, $m \geq 1$.	Consider first the case $l \leq m$. As in Remark~\ref{homologibem}, the action of $\hat{f}_*$ on $\Lambda^l H_1(\Sigma_g^2,\bbC)$ has an eigenvector $v = v_1 \wedge \dots \wedge v_l$ whose eigenvalue has absolute value $x = x_1 \cdots x_l$. Now, $\iota(v)$ is an eigenvector for the induced action with the same eigenvalue (up to the same root of unity), and it follows from the case of $n$ odd that $\Im(\phi^{n+1,d+1})$ contains an eigenvector which has the same eigenvalue as $\iota(v)$. Moreover, since the eigenvectors arising from $V^{n,d+2}$ all have absolute value strictly less than $x$ (as we take only the $(l-1)$'st wedge products), we obtain the desired eigenvector of $\diagramrep^{n,d}_{\exp(-\pi i/4)}(f)$. The conclusion now follows as in the case of odd $n$.
	
	The case $l > m$ is similar but involves also a small combinatorial exercise as in this case, there may also be eigenvectors coming from $V^{n,d+2}$ with eigenvalue of absolute value $x$. Let $d_x^{n,d}$ be the sum of the algebraic multiplicities of eigenvalues of absolute value $x$ of the action of $f$ on $V^{n,d}$. We claim that for $n$ even,
	\begin{align}
	  \label{binomiallighed}
	  d_x^{n,d} = \binom{2g+1-2m}{l-m} - \binom{2g+1-2m}{l-m-2}.
	\end{align}
	We appeal again to the decomposition used above, as a simple extension of the argument from Remark~\ref{homologibem} shows that
	\[
	  d_x^{n+1,d+1} = \binom{2g+2-2m}{l-m} - \binom{2g+2-2m}{l-m-2}.
	\]
	Now, since $d_x^{n+1,d+1} = d_x^{n,d} + d_x^{n,d+2}$, equation \eqref{binomiallighed} follows by induction on $l$, starting at $l = m$, by using well-known recursive formulas for binomial coefficients. Since $d_x^{n,d} > 0$, this completes the proof.
\end{proof}
\begin{proof}[Proof of Theorem~\ref{AMUulige}]
  Let $f \in \MCG(0,n)_\infty$ be a homological pseudo-Anosov, let $\tilde{f}$ denote its image in $\MCG(0,n+1)$, and let $\hat{f} \in \MCGrand(g,m)$ denote the image of $f$ under the Birman--Hilden map used in Theorems~\ref{thmuligen} and \ref{thmligengenyoung} with the appropriate values of $g$ and $m$. Everything has been set up so that $\hat{f}$ is a pseudo-Anosov of $\Sigma_g^m$ with the same stretch factor as $\tilde{f}$, and that, moreover, $\hat{f}$ has \emph{orientable} invariant foliations. This follows by the exact same reasoning as in the similar setup in Theorem~5.1 of \cite{BandBoyland}. In short: the orientability of a foliation is determined by the vanishing of its associated orientation homomorphism, and this on the other hand is ensured by the assumptions on the degrees of the singularities.
  
  Now, the stretch factor of any pseudo-Anosov with orientable invariant foliations is simply the spectral radius of its action on homology which is therefore strictly greater than $1$. This is a well-known result and in fact a criterion for having orientable foliations; see e.g. \cite[Lemma~4.3]{BandBoyland} and the discussion preceding it.
  
  Now the claim follows directly from Proposition~\ref{uendeligordenprop}.
\end{proof}
\begin{proof}[Proof of Theorem~\ref{AMUlige}]
  Consider now the case where $N = 2$, $n$ is even, and $\lambda$ is the empty diagram. Here, the level $k$ quantum $\SU(2)$-representation, rescaled on each generator by a suitable $k$-dependent root of unity, defines a representation of $\qMCG(0,n)$, equivalent by construction to $\rho_{2,k}$ (see \cite[Sect.~10]{JonHecke}). As multiplication by a root of unity does not change whether or not the order of a linear map is finite or infinite, we obtain from Theorem~\ref{AMUulige} the claimed result.
\end{proof}
\begin{rem}
  More generally, in \cite[Sect.~10]{JonHecke}, Jones finds that his representations may be tweaked by roots of unity to descend to the mapping class groups of spheres whenever the associated Young diagram $\lambda$ is rectangular. Thus, the same is of course true for the quantum $\SU(N)$-representations $\rho_{N,k}^\lambda$. One could therefore proceed as in \cite[Sect.~4]{AMU}, define new quantum representations for mapping class groups of punctured spheres, and immediately obtain a version of Theorem~\ref{AMUlige} for those.
\end{rem}

\begin{example}
  At this point it may be worth remarking that examples of homological pseudo-Anosovs are plenty.
  \begin{itemize}
  \item In \cite[App. A]{LT}, the authors provide several examples of homological (as well as non-homological) braids, some of which we shall return to in the appendix below.
  \item On a closed torus, the stretch factor of a pseudo-Anosov is always given by its action on homology, and so we recover the main result of \cite{AMU}.
  \item As noted in the proof of Theorem~\ref{AMUulige}, \cite[Lemma~4.3]{BandBoyland} tells us that if the spectral radius of the action of a pseudo-Anosov on homology equals its stretch factor, the pseudo-Anosov must necessarily have orientable invariant foliations, and so we can appeal directly to any of the existing homological constructions of pseudo-Anosovs to obtain interesting examples. One such family of examples arises as a special case of the pseudo-Anosovs described in \cite{Pen} (see also \cite[Sect.~5]{CB}) on the level of the covering surfaces, which -- passing through the Birman--Hilden homomorphism -- may be described as follows. Suppose that $n$ is even. Then any word in the generators $\sigma_1,\dots,\sigma_{n-1}$ such that the sign of the exponents for odd-indexed generators all agree, such that exponents of even-indexed generators all have the opposite sign as the odd-indexed ones, and such that each generator appears at least once, is a homological pseudo-Anosov.
  \end{itemize}
\end{example}

\subsection{Determining stretch factors}
In \cite[Cor.~5.8]{AMU}, the authors go on to show that the stretch factor of any given pseudo-Anosov of a four times punctured sphere may be obtained as limits of eigenvalues of the quantum representations of $\phi$. The analogous statement in our general case is the following.
\begin{cor}
  \label{sfcor}
  For any homological pseudo-Anosov $\phi \in \MCG(0,n)_\infty$ and Young diagram $\lambda$ as in Section~\ref{quantumrepsintro}, there exist eigenvalues $\lambda_k, \tilde{\lambda}_k$ of $\rho_{N,k}^\lambda(\phi)$ such that $\sqrt{\abs{\lambda_k \tilde{\lambda}_k}}$ tends to the stretch factor of $\phi$ as $k \to \infty$.
\end{cor}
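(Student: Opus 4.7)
The plan is to transport the eigenvalue structure on $H_1(\Sigma_g^m,\bbC)$ through Theorems~\ref{thmuligen}--\ref{thmligengenyoung} to the Jones side at $A = \exp(-\pi i/4)$, and then approximate by eigenvalues of $\rho_{N,k}^\lambda$ exactly as in the proof of Theorem~\ref{AMUulige}. Since $\phi$ is a homological pseudo-Anosov, the orientability of its lifted foliations forces the stretch factor $\mu$ to equal the spectral radius of $\hat\phi_* = \Psi(\phi)_*$, whose symplectic eigenvalues I write as $\mu_1,\mu_1^{-1},\dots,\mu_g,\mu_g^{-1}$ with $\mu_1 = \mu$. Pick eigenvectors $v_i$ for $\mu_i$ and $w_i$ for $\mu_i^{-1}$.

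The key algebraic observation is to choose, instead of a single top eigenvalue, the pair
\[
  \lambda = \mu_1\mu_2\cdots\mu_l,\qquad \tilde\lambda = \mu_1\mu_2^{-1}\cdots\mu_l^{-1}
\]
of $\Lambda^l\hat\phi_*$, realized by $v_1\wedge v_2\wedge\cdots\wedge v_l$ and $v_1\wedge w_2\wedge\cdots\wedge w_l$ respectively. These are engineered so that $\lambda\tilde\lambda = \mu^2$, and hence $\sqrt{\abs{\lambda\tilde\lambda}} = \mu$ exactly. I would then show both survive on the Jones-side representation: in the odd $n$ case this is immediate from Remark~\ref{homologibem} (both eigenvectors are pure wedge products of length $l\leq g$ and hence nonzero modulo $\omega\wedge\Lambda^{l-2}H_1$), and in the even $n$ case it follows from the counting argument of Proposition~\ref{uendeligordenprop} applied simultaneously at the two moduli $\abs{\lambda}$ and $\abs{\tilde\lambda}$, using the binomial identity \eqref{binomiallighed}. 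The upshot is that $\lambda$ and $\tilde\lambda$ are eigenvalues of $\diagramrep_A^{n,d}(\phi)$ at $A = \exp(-\pi i/4)$.

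Finally I would approximate by roots of unity. By Lemma~\ref{kvanttilgenlem} pick primitive $4(k+N)$'th roots $A_k \to \exp(-\pi i/4)$; since the entries of $\diagramrep_A^{n,d}(\phi)$ are Laurent polynomials in $A$, the multiset of eigenvalues varies continuously, so there exist eigenvalues $\lambda_k',\tilde\lambda_k'$ of $\diagramrep_{A_k}^{n,d}(\phi)$ converging to $\lambda,\tilde\lambda$. Kanie's theorem (Theorem~\ref{kaniethm}) together with the normalization $\sigma_i \mapsto A_k^{-1}\diagramrep_{A_k}^{n,d}(\sigma_i)$ identifies the rescaled representation with $\rho_{N,k}^\lambda$, and this rescaling multiplies each eigenvalue by $A_k^{-w(\phi)}$ of modulus $1$, where $w(\phi)$ is the exponent sum. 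Hence there are eigenvalues $\lambda_k,\tilde\lambda_k$ of $\rho_{N,k}^\lambda(\phi)$ with $\abs{\lambda_k}=\abs{\lambda_k'}$ and $\abs{\tilde\lambda_k}=\abs{\tilde\lambda_k'}$, so $\sqrt{\abs{\lambda_k\tilde\lambda_k}} \to \sqrt{\abs{\lambda\tilde\lambda}} = \mu$.

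The delicate point is the image-control step: ensuring that $\tilde\lambda$, and not only the top eigenvalue $\lambda$, genuinely appears in the Jones representation rather than merely in the ambient $\Lambda^l H_1$. The odd case is trivial, but in the even case this forces one to run the combinatorics of Proposition~\ref{uendeligordenprop} in parallel at two distinct absolute-value targets, which is where the bookkeeping lives.
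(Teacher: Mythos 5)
Your argument is essentially the paper's: the published proof is a two-line remark observing that at $q=-1$ the eigenvalues of $\diagramrep^{n,d}_A(\phi)$ include $\tau\mu$ and $\tau\mu^{-1}$ with $\abs{\tau}$ the stretch factor (your $\lambda$ and $\tilde\lambda$), and then invoking continuity in $A$ exactly as in the proof of Proposition~\ref{uendeligordenprop}. The one point you flag as delicate --- that $\tilde\lambda$, whose modulus is not maximal, genuinely survives into the Jones subrepresentation when $n$ is even --- is asserted rather than argued in the paper as well, so your write-up is, if anything, more explicit than the original.
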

\begin{proof} 
	The statement follows from the proof of Proposition~\ref{uendeligordenprop} by continuity in $A$ of the eigenvalues of $\diagramrep^{n,d}_A(\phi)$, as these include at $q = A^4 = -1$ the values $\tau \mu$ and $\tau \mu^{-1}$, where $\abs{\tau}$ is the stretch factor of $\varphi$ (and likewise, $\mu$ is a product of eigenvalues of the induced action on homology).
\end{proof}

\appendix
\section{Examples, experiments, and observations}
\label{apptorelli}
\subsection{A pseudo-Anosov in the Torelli group}
The preceding discussion begs the question of the behaviour of those pseudo-Anosov mapping classes for which the action on homology contains no information. In particular, we could consider examples of pseudo-Anosovs acting trivially on the homology of the double cover. That pseudo-Anosovs in the Torelli group, i.e. the kernel of the action of $\Gamma_g$ on $H_1(\Sigma_g)$, exist for $g \geq 2$ was shown in \cite{ThAnosov}. One concrete such element was constructed by Brown \cite[App.]{BroConst} for $g = 2$, and appealing once again to the setup of the Birman--Hilden theorem, this allows us to obtain the desired braid $\phi \in B_6$.

More precisely, define
\begin{align}
  \label{BrownpA}
	\phi = \eta \xi^{-1}
\end{align}
explicitly in terms of the standard braid group generators by
\begin{align*}
  \xi &= \gamma \eta \gamma^{-1}, \\
	\eta &= \sigma_4^2\sigma_5\sigma_4^2\sigma_5^2\sigma_4^2\sigma_5\sigma_4^2, \\
	\delta &= \sigma_4\sigma_5\sigma_3\sigma_2\sigma_1\sigma_2^{-1}\sigma_3^{-1}\sigma_5^{-1}\sigma_4^{-1}, \\
	\gamma &= \delta \sigma_3 \sigma_2^{-1}\sigma_1^{-1}\delta \sigma_1\sigma_2\sigma_3^{-1}\delta^{-1}.
\end{align*}

As we have seen, to show that a pseudo-Anosov $\psi \in B_n$ has infinite order in the quantum representations for all but finitely many levels, it suffices to show that the function $\sr_d(\psi) : [0,1] \to \bbR_{> 0}$ given by
\begin{align*}
	 \sr_d(\psi)(x) = \sr(\diagramrep_{\exp(-\pi i x/4)}^{n,d}(\psi))
\end{align*}
is greater than $1$ for some value $x \in [0,1]$. The result of evaluating this function for the pseudo-Anosov $\phi$ of \eqref{BrownpA} is shown\footnote{This figure, and all other figures in this section, were created using Sage~5.13 and Mathematica~9.0.} in Figure~\ref{srBrownpA}. We see for instance that $\sr_0(\phi)(x) = 1$ for $x \in [0,\tfrac{1}{2}] \cup \{1\}$ but that this is not the case in general.

\begin{figure}[h]
  \centering
	\includegraphics[scale=0.8]{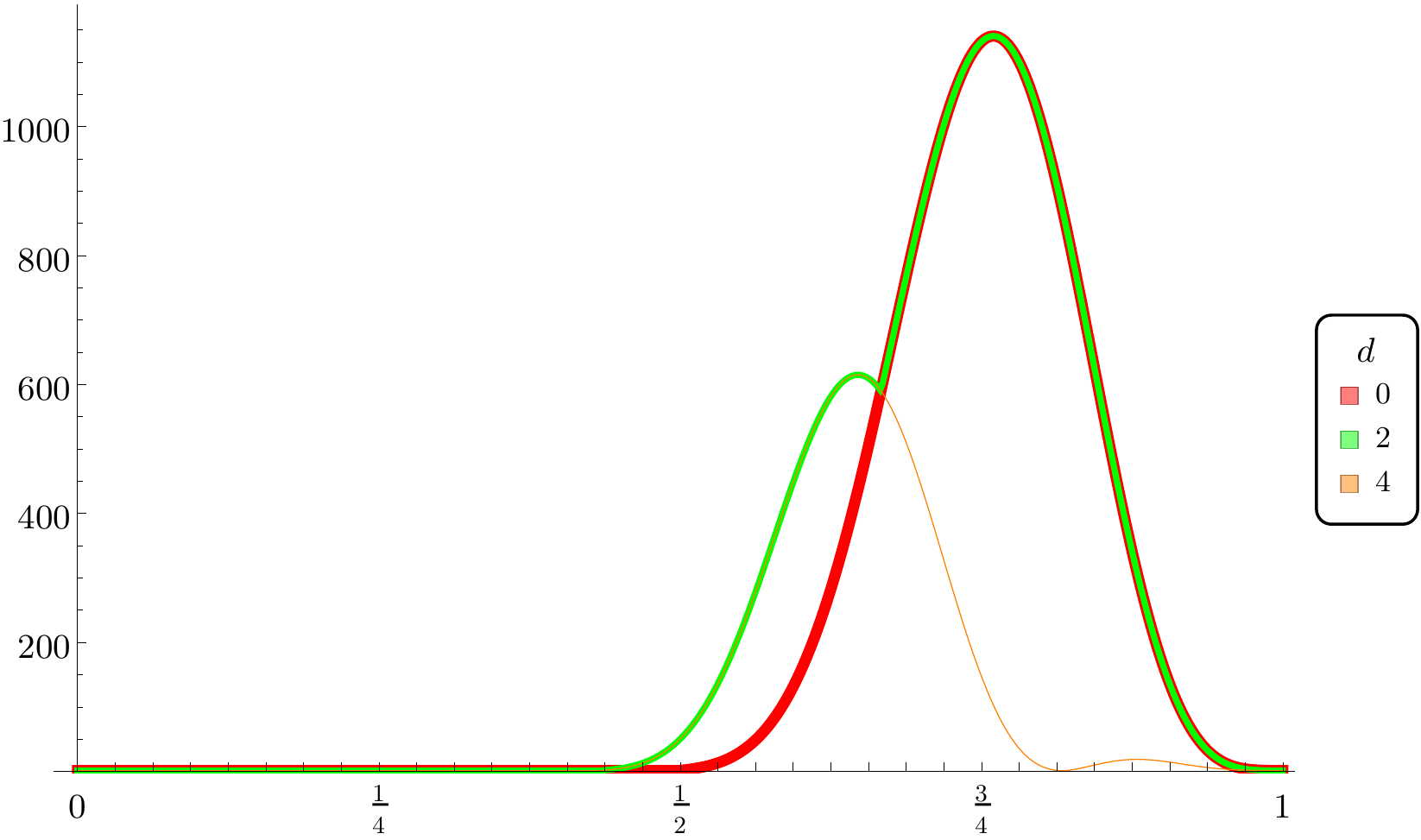}
	\caption{Graphs of $\sr_d(\phi)(x)$ for $x \in [0,1]$.}
	\label{srBrownpA}
\end{figure}

\begin{prop}[Computer assisted]
  There exist pseudo-Anosovs in $\ker \diagramrep^{n,d}_{\exp(-\pi i/4)}$ that have infinite order in the corresponding $\rho_{N,k}^\lambda$ for all but finitely many $k$.
\end{prop}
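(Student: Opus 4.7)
The plan is to exhibit Brown's Torelli pseudo-Anosov $\phi \in B_6$ from \eqref{BrownpA} as the desired witness, and to verify the two required properties: membership in the kernel (which is essentially symbolic) and infinite order in the quantum representations (which is where the computer assistance enters).

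First I would verify that $\phi \in \ker \diagramrep^{6,d}_{\exp(-\pi i/4)}$ for each admissible $d$. By construction, $\phi = \eta \xi^{-1}$ with $\xi = \gamma\eta\gamma^{-1}$, so the exponent sum of $\phi$ as a word in $\sigma_1,\dots,\sigma_5$ vanishes. Consequently $\diagramrep^{n,d}_A(\phi)$ equals the rescaled representation $\sigma_i \mapsto A^{-1}\diagramrep^{n,d}_A(\sigma_i)$ evaluated on $\phi$. By Theorems~\ref{mainthm}, \ref{thmuligen}, and \ref{thmligengenyoung}, this rescaled action is equivalent to $\rho^{g,l}_{\hom}(\phi)$ for appropriate $l$ (or its analog on $\Sigma_g^2$ in Theorem~\ref{thmligengenyoung}). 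Since Brown showed $\phi$ lies in the Torelli group, $\Psi(\phi)$ acts trivially on $H_1$, hence trivially on every wedge product and every subquotient thereof. Thus $\diagramrep^{6,d}_{\exp(-\pi i/4)}(\phi) = \mathrm{id}$ for each $d \in \{0,2,4,6\}$.

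Next I would produce infinite order in $\rho^\lambda_{N,k}$. By Lemma~\ref{korderlem}, it suffices to exhibit a single $x_0 \in [0,1]$ with $\sr_d(\phi)(x_0) > 1$ for the $d$ corresponding to $(N,k,\lambda)$. The function $x \mapsto \sr_d(\phi)(x)$ is continuous, and Figure~\ref{srBrownpA} suggests that, while $\sr_0(\phi)$ is identically $1$ on $[0,\tfrac12]\cup\{1\}$ (consistent with $\phi$ being trivial at $q = -1$), it does strictly exceed $1$ on a nontrivial subinterval of $(\tfrac12,1)$. The actual verification I would carry out explicitly: choose a rational $x_0$ in this range, form the matrix $\diagramrep^{6,0}_{\exp(-\pi i x_0/4)}(\phi)$ in the non-crossing-pairing basis of $V^{6,0}$ (of dimension $C_4 = 14$, for instance) by composing the matrices for the braid generators as prescribed in Section~\ref{jonesdefsect}, and compute its spectral radius numerically to confirm it exceeds $1$. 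This is the computer-assisted step.

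The main obstacle is purely computational: the formulas for $\diagramrep^{n,d}_A$ on the generators involve the Kauffman bracket resolution together with the rule that diagrams in which two bottom points become connected are killed, so the matrices depend on $A$ in a straightforward but slightly involved way, and one has to evaluate the spectral radius (not merely the determinant or trace) at a specific $A$ and check an inequality. Once any $x_0$ is found with $\sr_d(\phi)(x_0) > 1$, Lemma~\ref{korderlem} immediately gives that $\rho^\lambda_{N,k}(\phi)$ has infinite order for all but finitely many $k$, completing the proof.
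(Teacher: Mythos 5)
Your proposal follows essentially the same route as the paper: the paper's (implicit) proof is precisely to take Brown's Torelli pseudo-Anosov on $\Sigma_2$, lift it through the Birman--Hilden setup to the braid $\phi = \eta\xi^{-1} \in B_6$ of \eqref{BrownpA}, deduce kernel membership at $q=-1$ from Theorem~\ref{mainthm} together with the vanishing exponent sum, and then verify numerically (Figure~\ref{srBrownpA}) that $\sr_d(\phi)(x_0)>1$ for some $x_0\in(\tfrac12,1)$ so that Lemma~\ref{korderlem} applies. Your observation that the exponent sum of $\phi$ vanishes, so that no rescaling discrepancy arises between $\diagramrep^{n,d}_A$ and the representation appearing in the main theorems, is a correct and worthwhile detail.

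One assertion is not justified as written: you claim $\diagramrep^{6,d}_{\exp(-\pi i/4)}(\phi)=\mathrm{id}$ for every $d\in\{0,2,4,6\}$ on the grounds that a Torelli element acts trivially on ``every wedge product and every subquotient.'' For $d=0$ this is correct, since Theorem~\ref{mainthm} identifies the representation with $\rho^{g,g}_{\hom}$ on $H_1(\Sigma_g,\bbC)$ of the \emph{closed} surface, on which Brown's element acts trivially. But for $d=2,4$ Theorem~\ref{thmligengenyoung} places the representation inside $\Lambda^l H_1(\Sigma_g^2,\bbC)$, and an element that is Torelli on the capped-off closed surface acts on $H_1(\Sigma_g^2,\bbC)$ only unipotently (it may translate classes by multiples of a boundary class), so triviality does not follow; the paper accordingly only records that $\sr_0(\phi)(1)=1$ ``but that this is not the case in general.'' Since the proposition is an existence statement, the $d=0$ case alone carries the proof and your argument goes through; just drop, or separately verify by computer, the claim for the remaining values of $d$.
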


Previously, we saw that $\sr$ naturally determines the stretch factors of homological pseudo-Anosov but it is not clear in which sense, if any, this is true for the mapping class $\phi$ above.

\subsection{Infinite order at low levels}
Other examples exhibiting similar behaviours include the stretch factor minimizing pseudo-Anosov braids of \cite{LT}; plots of $\sr$ are shown for these in Figure~\ref{masserafplots}. In each of these, we let $\psi_n = \sigma_1 \cdots \sigma_n$, and the dashed line indicates the stretch factor of the pseudo-Anosov in question. In particular, the braid in Figure~\ref{LTpa4} provides an interesting example: it has finite $k$-dependent order in $\rho_{N,k}^\lambda$ for every level $k$ with $N > 2$ and $\lambda = {\fontsize{4pt}{12pt}\selectfont {\yng(2,2)}}$. We remark that in \cite[Conjec.~2.4]{AMU}, the authors do not mention any possible assumptions on the labelling in the relevant quantum representations. 
\begin{figure}[h]
  \centering
  \begin{subfigure}[c]{0.47\textwidth}
    \centering
    \includegraphics[scale=0.45]{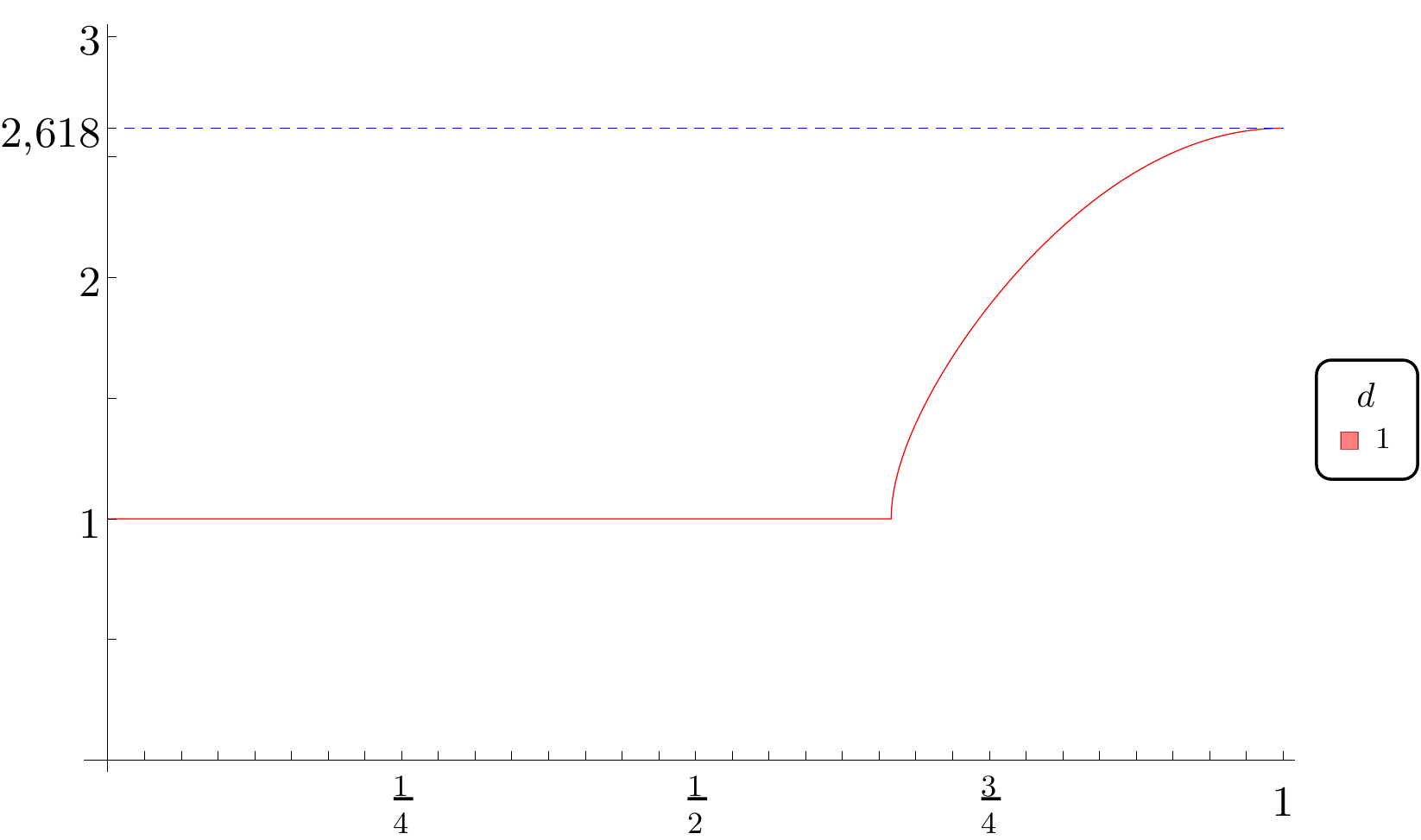}
    \caption{$\sigma_1\sigma_2^{-1} \in B_3$.}
    \label{LTpa3}
  \end{subfigure}
  \hfill
  \begin{subfigure}[c]{0.47\textwidth}
    \centering
    \includegraphics[scale=0.45]{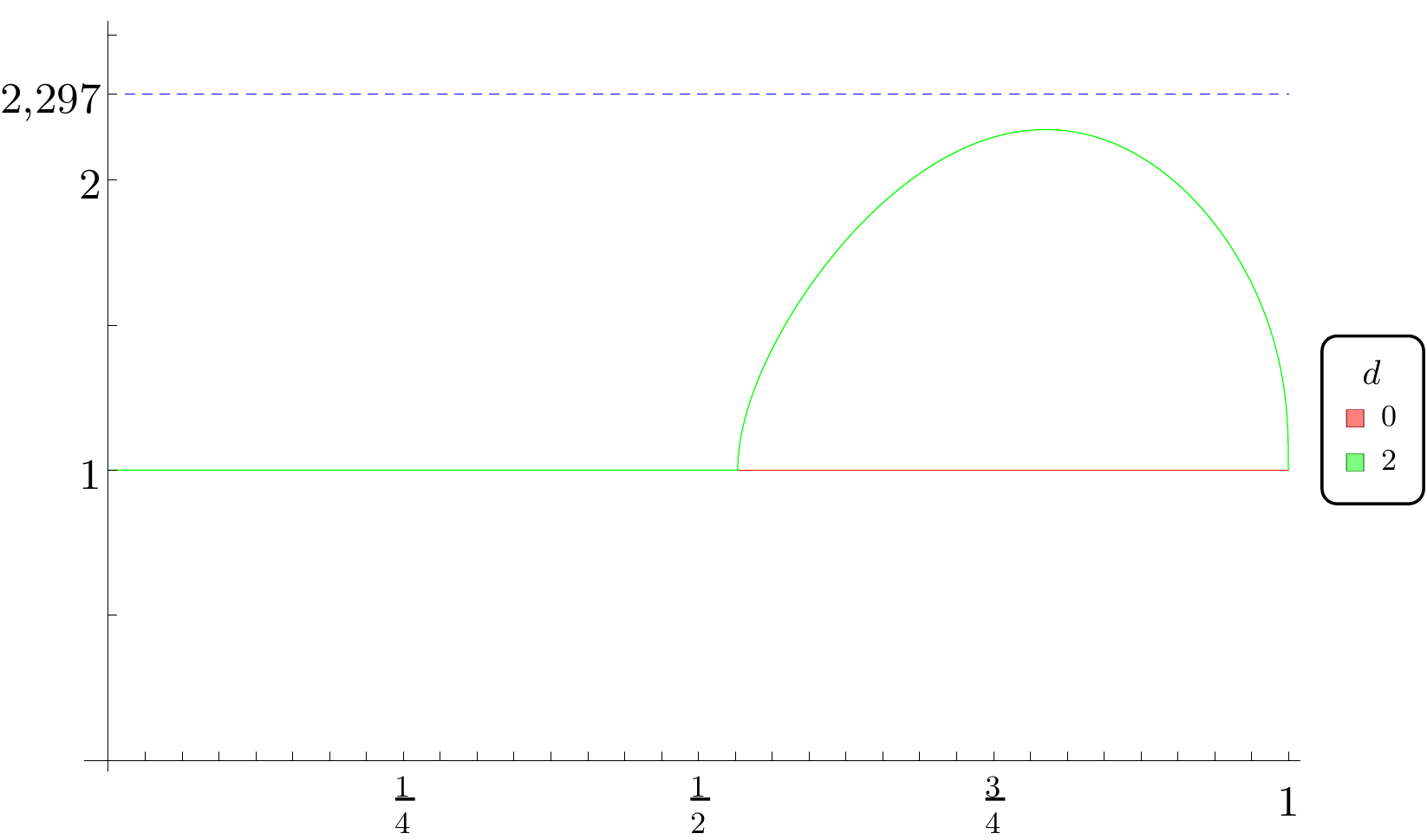}
    \caption{$\sigma_1\sigma_2\sigma_3^{-1} \in B_4$.}
    \label{LTpa4}
  \end{subfigure}
  \\[0.5cm]
  \begin{subfigure}[c]{0.47\textwidth}
    \centering
    \includegraphics[scale=0.45]{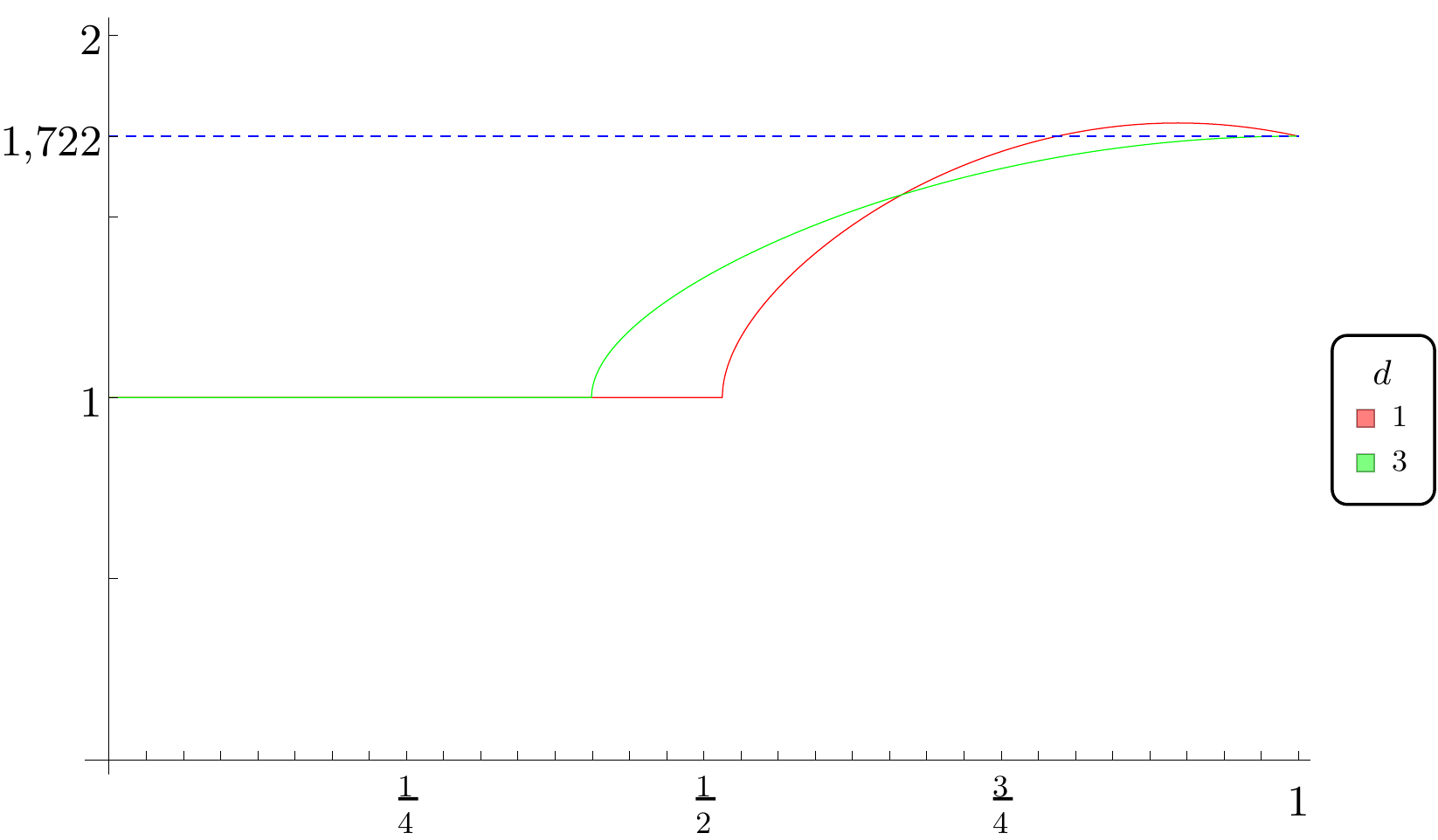}
    \caption{$\psi_3^2\sigma_4\sigma_3^{-1} \in B_5$.}
    \label{LTpa5}
  \end{subfigure}
  \hfill
  \begin{subfigure}[c]{0.47\textwidth}
    \centering
    \includegraphics[scale=0.45]{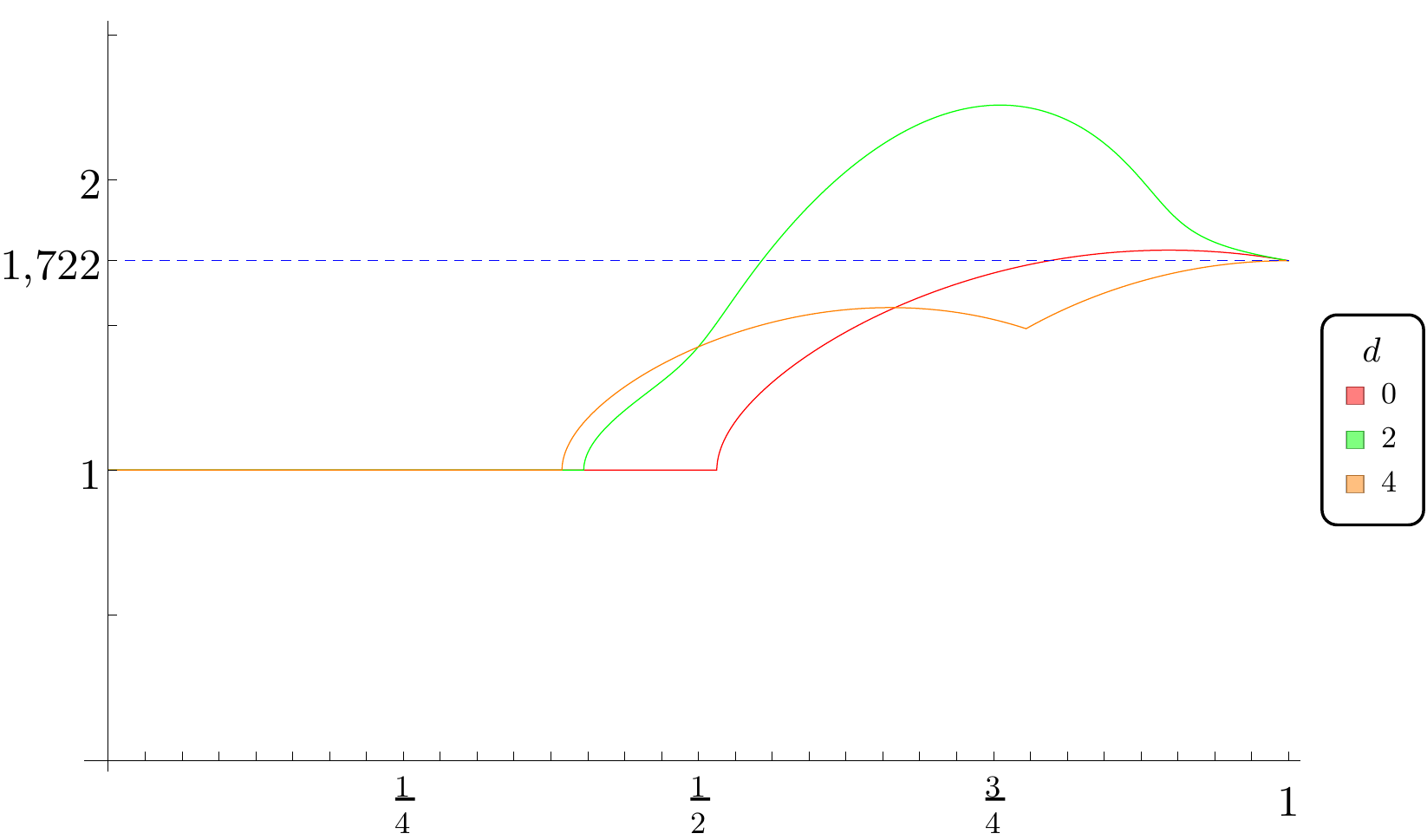}
    \caption{$(\sigma_2\sigma_1)^2\psi_5^2 \in B_6$.}
    \label{LTpa6}
  \end{subfigure}
  \\[0.5cm]
  \begin{subfigure}[c]{0.47\textwidth}
    \centering
    \includegraphics[scale=0.45]{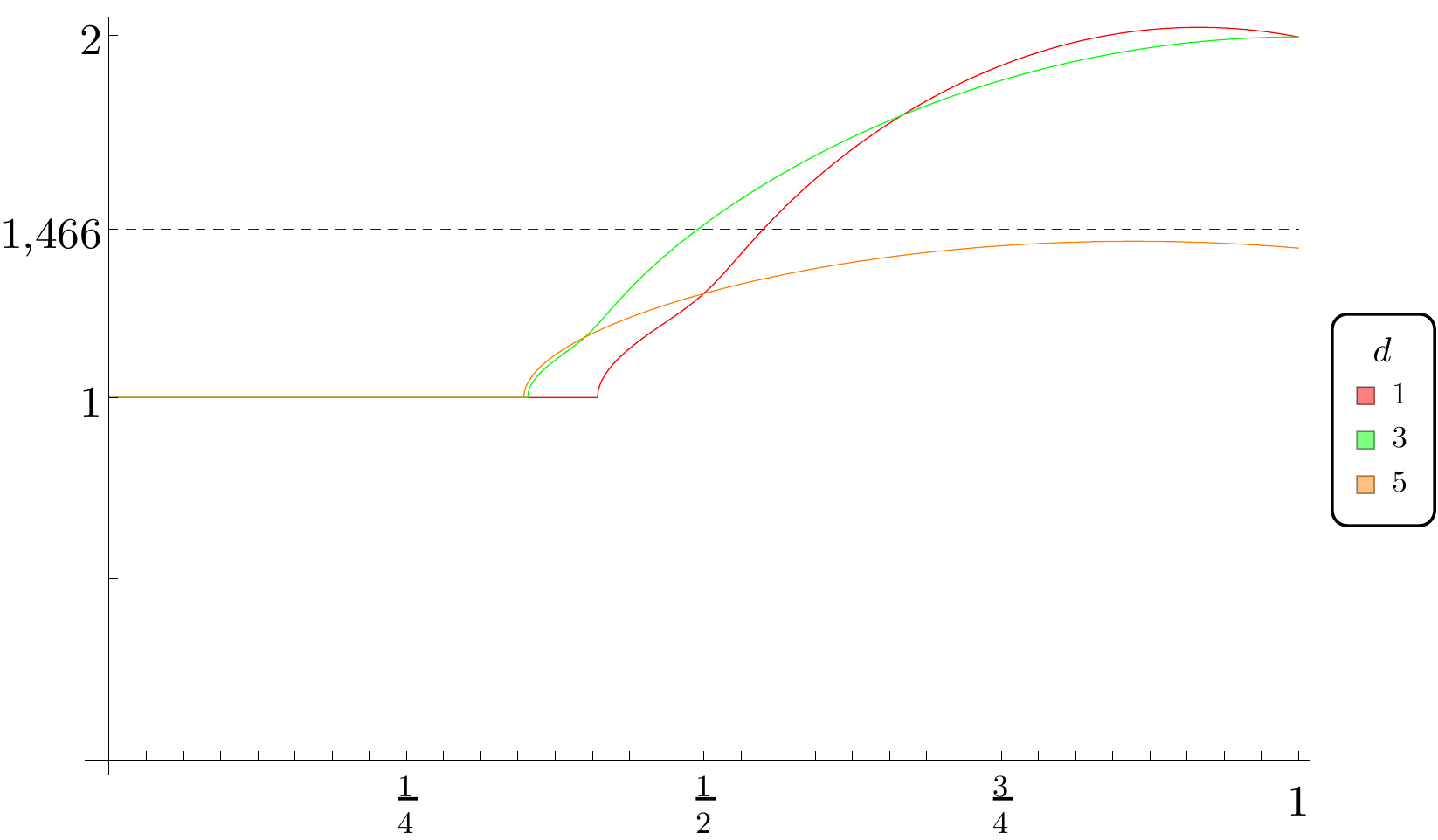}
    \caption{$\sigma_4^{-2}\psi_6^2 \in B_7$.}
    \label{LTpa7}
  \end{subfigure}
  \hfill
  \begin{subfigure}[c]{0.47\textwidth}
    \centering
    \includegraphics[scale=0.45]{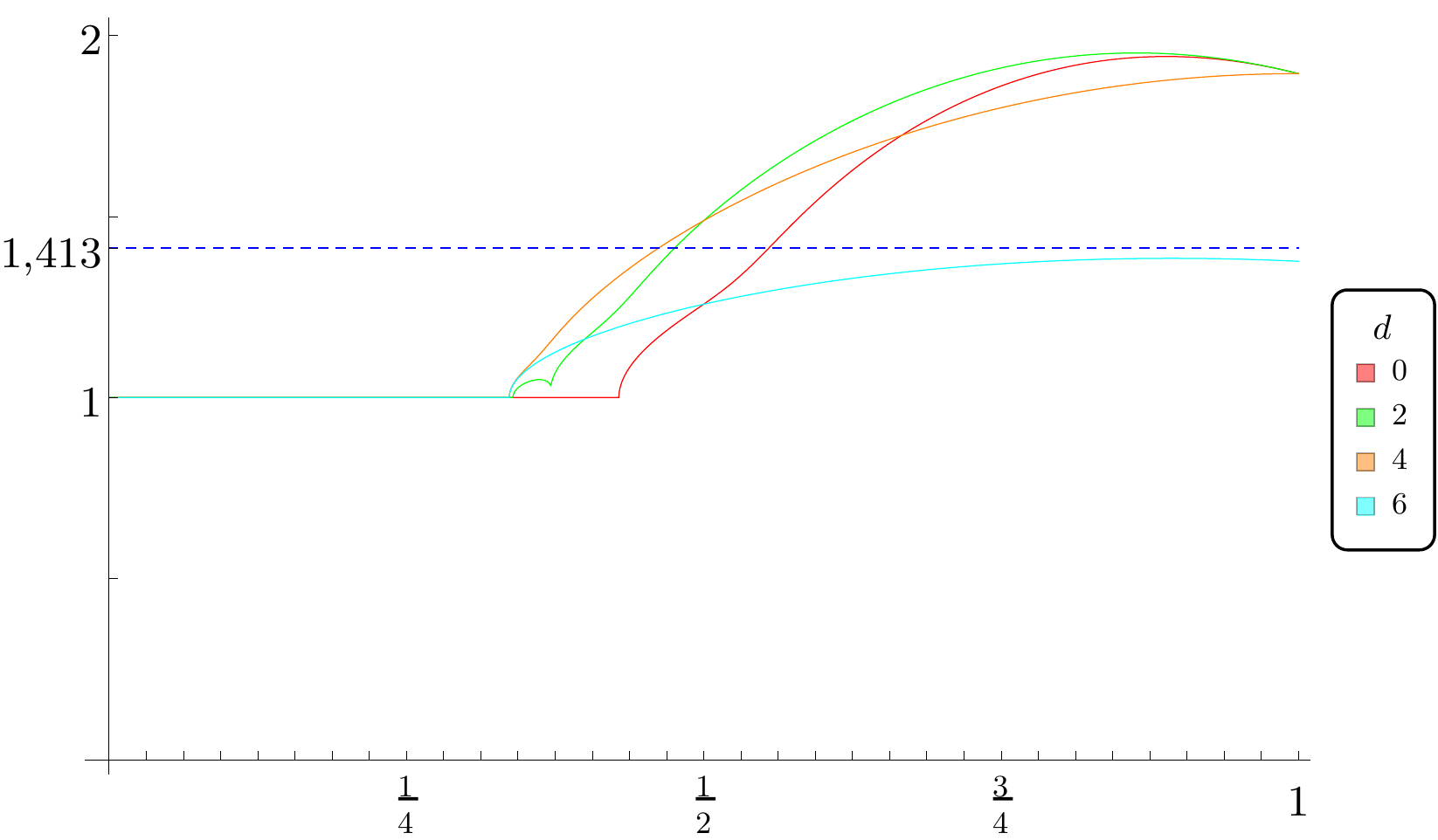}
    \caption{$\sigma_2^{-1}\sigma_1^{-1}\psi_7^5 \in B_8$.}
    \label{LTpa8}
  \end{subfigure}
  \caption{Plots of spectral radii for braids with small stretch factors.}
  \label{masserafplots}
\end{figure}

Notice also that these graphs allow us to deduce more precisely at \emph{which} levels $k$, the orders of the mapping classes in question are infinite. For example, in Figure~\ref{uendeligorden}, we consider again the braid of Figure~\ref{LTpa4}, now viewed as an element of $B_6$ and highlight the possible specializations of $A$ for the level $k$ quantum $\SU(2)$-representation when $k = 5, \dots, 12$.

\begin{figure}[h]
  \centering
	\includegraphics[scale=0.8]{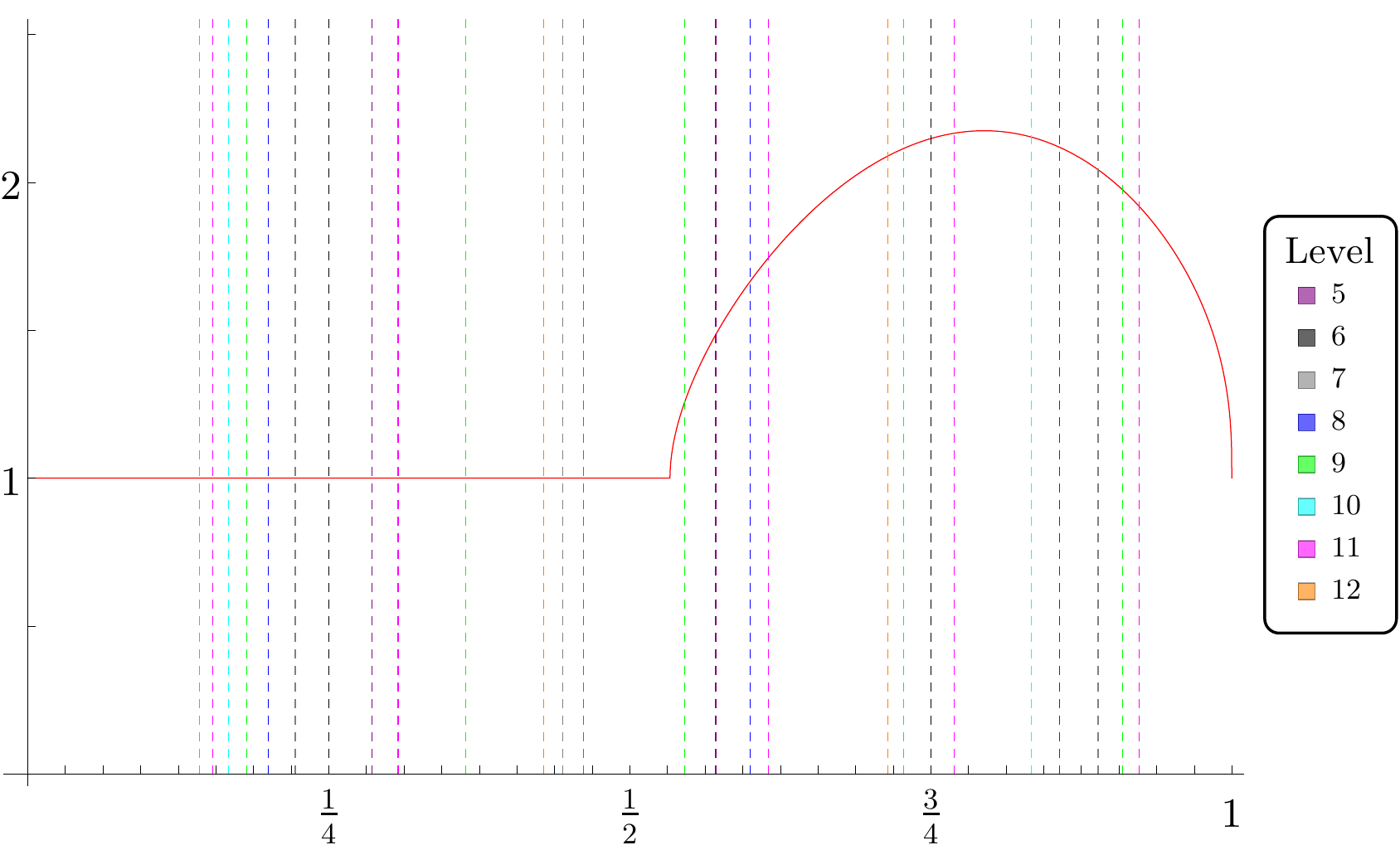}
	\caption{The graph of $\sr_0$ for $\sigma_1\sigma_2\sigma_3^{-1} \in B_6$.}
	\label{uendeligorden}
\end{figure}

From this figure alone, we obtain the following.
\begin{prop}
  \label{inforder8}
  The level $8$ quantum $\SU(2)$-representation, with $\lambda$ the empty Young diagram, of the mapping class $\sigma_1\sigma_2\sigma_3^{-1} \in B_6$ is infinite.
\end{prop}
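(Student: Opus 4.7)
The plan is to reduce the proposition to a finite-dimensional eigenvalue computation at a single, explicit specialisation of the parameter, and then read off the answer from the data already displayed in Figure~\ref{uendeligorden}. By Theorem~\ref{kaniethm} together with the identifications of Section~\ref{jonesdefsect}, the level $8$ quantum $\SU(2)$-representation with $\lambda$ the empty Young diagram, evaluated on $\phi = \sigma_1\sigma_2\sigma_3^{-1} \in B_6$, agrees up to a scalar factor $A$ on each generator (itself a $40$-th root of unity at level $8$, and therefore irrelevant to the question of finite order) with $\diagramrep_A^{6,0}(\phi)$, where $A^4 = q = \exp(2\pi i/10)$. It is thus enough to exhibit, for some fourth root $A$ of $q$, an eigenvalue of $\diagramrep_A^{6,0}(\phi)$ of absolute value strictly different from $1$. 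The parametrisation $A = \exp(-\pi i x/4)$ used in Figure~\ref{uendeligorden} covers the lower semi-circle $A^4 \in \{\exp(-\pi i x) : x\in [0,1]\}$, so the point $x = 1/5$ corresponds to $A^4 = \bar q$; since the entries of $\diagramrep_A^{6,0}$ lie in $\bbZ[A,A^{-1}]$, the spectrum at $A^4 = \bar q$ is the complex conjugate of the one at $A^4 = q$, and in particular they have identical absolute values.

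To carry out the computation I would fix the basis of $V^{6,0}$ consisting of the $C_3 = 5$ non-crossing perfect matchings of $\{1,\dots,6\}$, and assemble the $5\times 5$ matrices $\diagramrep_A(\sigma_1)$, $\diagramrep_A(\sigma_2)$, and $\diagramrep_A(\sigma_3^{-1})$ using the Temperley--Lieb rule $\sigma_i \mapsto A\cdot\mathrm{id} + A^{-1} e_i$ of Section~\ref{jonesdefsect}; the inverse is handled by the easily verified formula $\sigma_i^{-1} \mapsto A^{-1}\cdot\mathrm{id} + A\cdot e_i$, which follows from $e_i^2 = (-A^2 - A^{-2}) e_i$. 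Multiplying the three matrices, specialising $A = \exp(-\pi i/20)$, and computing the characteristic polynomial exactly over $\bbQ(\zeta_{40})$ then yields the full spectrum of $\diagramrep_A^{6,0}(\phi)$.

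The final step is to observe that at least one of the resulting eigenvalues has absolute value strictly greater than $1$, which is exactly what is recorded numerically at $x = 1/5$ in Figure~\ref{uendeligorden}. A linear map with an eigenvalue off the unit circle cannot have finite order, and since this property is preserved under the root-of-unity rescaling identified above, the quantum representation of $\phi$ at level $8$ is infinite.

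The only real ``obstacle'' here is that the verification must be performed in exact arithmetic over $\bbQ(\zeta_{40})$ in order to certify rigorously that the observed spectral radius genuinely exceeds $1$, rather than merely appearing to do so because of floating-point noise. Since the ambient vector space is only $5$-dimensional and the relevant polynomials have modest degree over a concrete cyclotomic field, this is a routine computer-algebra task, and no further conceptual input is required.
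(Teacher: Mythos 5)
Your proposal is correct and follows essentially the same route as the paper's own argument: reduce to the diagram representation $\diagramrep_A^{6,0}$ via Theorem~\ref{kaniethm} and Section~\ref{jonesdefsect}, specialise $A$ to a primitive $40$th root of unity (the paper uses $A=\exp(2\pi i\cdot 3/40)$ and invokes the independence-of-$l$ observation from Lemma~\ref{korderlem}, while you use $A=\exp(-\pi i/20)$ together with the complex-conjugation argument — both are valid for the same reason), and observe that the spectral radius exceeds $1$, hence the operator has infinite order. The paper writes the $5\times 5$ matrix explicitly and notes the spectral radius is $\approx 1.665$, so the verification is robust and in fact requires no electronic aid; your extra caution about exact arithmetic over $\bbQ(\zeta_{40})$ is reasonable but not strictly needed given how far the value is from $1$.
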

\begin{proof}
  Whereas the claim follows from Figure~\ref{uendeligorden}, let us also note that one finds by explicit calculation -- requiring no electronic aid -- that for a particular ordering in our preferred basis,
	\begin{align*}
	  \diagramrep_A^{6,0}(\sigma_1\sigma_2\sigma_3^{-1}) = A \begin{pmatrix} 0 & A^{-6} & 0 & -1 & A^2 \\ A^2 & -1 + A^{-4} & -1 & 0 & 0 \\ 0 & 0 & 0 & A^{-6} - A^{-2} & 1 \\ 0 & 0 & -A^{-2} & -1 + A^{-4} & A^2 \\ 0 & 0 & 0 & -A^2 & A^4 \end{pmatrix}.
	\end{align*}
	Substituting the primitive $4\cdot(8+2)$'th root of unity $A = \exp(2\pi i \tfrac{3}{40})$, one obtains that
  \begin{align*}
	  \sr(\diagramrep_A^{6,0}(\sigma_1\sigma_2\sigma_3^{-1})) \approx 1{,}665 > 1.
	\end{align*}
\end{proof}
While we have not considered higher genus quantum representations of mapping class groups in the present paper, let us notice that it follows from Proposition~\ref{inforder8} and a standard factorization argument that the quantum representations of closed surfaces of genus $g > 2$ have infinite image at level $k = 8$; this extends an earlier result due to Masbaum \cite{Masinf} (which is indeed the reason we considered this particular level). In fact, at level $k = 8$, the image of the quantum $\SU(2)$-representation obtained from the sphere with four marked points is known to be finite by the result of \cite{LPS}, and so one can not use the Jones representations to obtain a similar result for $g = 2$, $k = 8$. However, the second author has found -- again aided by computer calculation -- example mapping classes whose quantum representations have infinite order for $g = 2$, $k = 8$, by embedding in the genus $2$ surface a one-holed torus rather than a four-holed sphere; see e.g. \cite[Prop.~4.16]{JoerThesis}, \cite[p.~49]{Joer}.

Now, the question about the size of the image is still open for levels $k = 1$, $2$, and $4$, and casting a sidelong glance at Figures~\ref{LTpa7} and \ref{LTpa8}, noting that at the fourth root of unity $q = A^4 = i$, the function $\sr$ is greater than $1$, one could be inclined to believe that these provide infinite order examples for $k = 2$. Recall however that these specializations do not correspond to the level $2$ quantum representations in which one finds that the two elements have orders $28$ and $16$ respectively. Indeed, in the cases $k = 1$, $2$, and $4$, the only possible specializations of values of $q$ allowed by skein theory are $q = \exp(2\pi i/(k+2))$. The corresponding representations are known to be unitary and so no information may be gained from considering spectral radii.

\subsection{Bigelow's element of the kernel of the Burau representation}
As a final remark, the connection between the various $\sr_d$ is an interesting question. As a single example, Figure~\ref{bigelow} shows $\sr_d(\psi)$ for Bigelow's \cite{Big} element $\psi \in \ker(\pi^{5,3}_q)$, given explicitly by
\begin{align*}
	\psi = [\psi_1^{-1}\sigma_4\psi_1,\psi_2^{-1}\sigma_4\sigma_3\sigma_2\sigma_1^2\sigma_2\sigma_3\sigma_4\psi_2],
\end{align*}
where
\begin{align*}
	\psi_1 &= \sigma_3^{-1}\sigma_2\sigma_1^2\sigma_2\sigma_4^3\sigma_3\sigma_2, \\
	\psi_2 &= \sigma_4^{-1}\sigma_3\sigma_2\sigma_1^{-2}\sigma_2\sigma_1^2\sigma_2^2\sigma_1\sigma_4^5.
\end{align*}
\begin{figure}[h]
  \centering
	\includegraphics[scale=0.8]{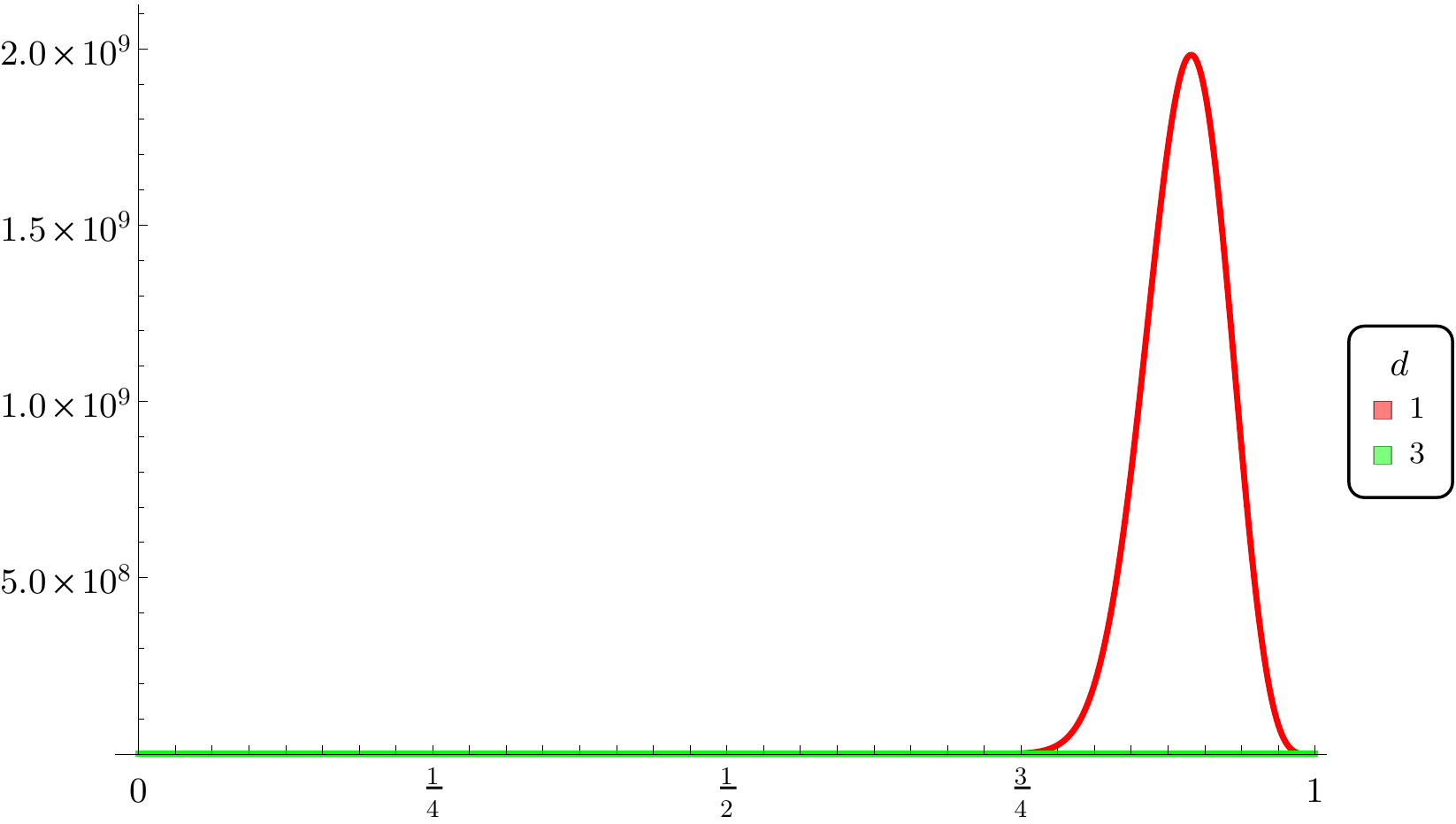}
	\caption{The functions $\sr_d(\psi)(x)$, $x \in [0,1]$, for a $\psi \in \ker(\pi^{5,3}_q)$.}
	\label{bigelow}
\end{figure}

\clearpage
\bibliographystyle{is-alpha}
\bibliography{references}

\begin{thebibliography}{BHMV95}
\ifx \showCODEN  \undefined \def \showCODEN #1{CODEN #1}  \fi
\ifx \showISBN   \undefined \def \showISBN  #1{ISBN #1}   \fi
\ifx \showISSN   \undefined \def \showISSN  #1{ISSN #1}   \fi
\ifx \showLCCN   \undefined \def \showLCCN  #1{LCCN #1}   \fi
\ifx \showPRICE  \undefined \def \showPRICE #1{#1}        \fi
\ifx \showURL    \undefined \def \showURL {URL }          \fi
\ifx \path       \undefined \input path.sty               \fi
\ifx \ifshowURL \undefined
     \newif \ifshowURL
     \showURLtrue
\fi

\bibitem[AMU06]{AMU}
J{\o}rgen~Ellegaard Andersen, Gregor Masbaum, and Kenji Ueno.
\newblock Topological quantum field theory and the {N}ielsen-{T}hurston
  classification of {$M(0,4)$}.
\newblock {\em Math. Proc. Cambridge Philos. Soc.}, 141\penalty0 (3):\penalty0
  477--488, 2006.

\bibitem[And06]{asympgeom}
J{\o}rgen~Ellegaard Andersen.
\newblock Asymptotic faithfulness of the {$SU(n)$} representations.
\newblock {\em Annals of Mathematics}, 163:\penalty0 347--368, 2006.

\bibitem[AU07a]{AU1}
J{\o}rgen~Ellegaard Andersen and Kenji Ueno.
\newblock {Abelian Conformal Field theories and Determinant Bundles}.
\newblock {\em International Journal of Mathematics}, 18:\penalty0 919--993,
  2007.

\bibitem[AU07b]{AU2}
J{\o}rgen~Ellegaard Andersen and Kenji Ueno.
\newblock {Constructing modular functors from conformal field theories}.
\newblock {\em Journal of Knot theory and its Ramifications}, 16\penalty0
  (2):\penalty0 127--202, 2007.

\bibitem[AU11]{AU4}
J{\o}rgen~Ellegaard Andersen and Kenji Ueno.
\newblock {Construction of the Reshetikhin-Turaev TQFT from conformal field
  theory}.
\newblock \url{http://arxiv.org/pdf/1110.5027}, 2011.

\bibitem[AU12]{AU3}
J{\o}rgen~Ellegaard Andersen and Kenji Ueno.
\newblock {Modular functors are determined by their genus zero data}.
\newblock {\em Quantum Topology}, 3:\penalty0 255--291, 2012.

\bibitem[BB07]{BandBoyland}
Gavin {Band} and Philip {Boyland}.
\newblock {The Burau estimate for the entropy of a braid.}
\newblock {\em {Algebr. Geom. Topol.}}, 7:\penalty0 1345--1378, 2007.
\newblock \showISSN{1472-2747; 1472-2739/e}.

\bibitem[BC88]{CB}
Steven~A. Bleiler and Andrew~J. Casson.
\newblock {\em Automorphisms of {S}urfaces after {N}ielsen and {T}hurston}.
\newblock Cambridge University Press, 1988.

\bibitem[BH73]{BH}
Joan~S. Birman and Hugh~M. Hilden.
\newblock On {I}sotopies of {H}omeomorphisms of {R}iemann {S}urfaces.
\newblock {\em The Annals of Mathematics, Second Series}, 97\penalty0
  (3):\penalty0 424--439, 1973.

\bibitem[BHMV92]{BHMV1}
Christian Blanchet, Nathan Habegger, Gregor Masbaum, and Pierre Vogel.
\newblock Three-manifold invariants derived from the {K}auffman bracket.
\newblock {\em Topology}, 31:\penalty0 685--699, 1992.

\bibitem[BHMV95]{BHMV2}
Christian Blanchet, Nathan Habegger, Gregor Masbaum, and Pierre Vogel.
\newblock Topological quantum field theories derived from the {K}auffman
  bracket.
\newblock {\em Topology}, 34:\penalty0 883--927, 1995.

\bibitem[Big99]{Big}
Stephen~J. Bigelow.
\newblock {The Burau representation is not faithful for $n=5$.}
\newblock {\em Geom. Topol.}, 3:\penalty0 397--404, 1999.

\bibitem[Bla00]{Bla}
Christian Blanchet.
\newblock Hecke algebras, modular categories and $3$-manifolds quantum
  invariants.
\newblock {\em Topology}, 39\penalty0 (1):\penalty0 193--223, 2000.

\bibitem[Bro03]{BroConst}
Richard~J. Brown.
\newblock {Generating quadratic pseudo-Anosov homeomorphisms of closed
  surfaces.}
\newblock {\em Geom. Dedicata}, 97:\penalty0 129--150, 2003.

\bibitem[FM11]{FM}
Benson Farb and Dan Margalit.
\newblock {\em {A primer on mapping class groups.}}
\newblock {Princeton Mathematical Series. Princeton, NJ: Princeton University
  Press. xiv, 492~p.}, 2011.

\bibitem[FWW02]{FWW}
Michael Freedman, Kevin Walker, and Zhenghan Wang.
\newblock Quantum {$SU(2)$} faithfully detects mapping class groups modulo
  center.
\newblock {\em Geom. Topol.}, 6:\penalty0 523--539, 2002.

\bibitem[Hit90]{Hit}
Nigel Hitchin.
\newblock Flat {C}onnections and {G}ometric {Q}uantization.
\newblock {\em Commun. Math. Phys.}, 131:\penalty0 347--380, 1990.

\bibitem[{Iwa}78]{Iwa}
Henryk {Iwaniec}.
\newblock {On the problem of Jacobsthal.}
\newblock {\em {Demonstr. Math.}}, 11:\penalty0 225--231, 1978.
\newblock \showISSN{0420-1213}.

\bibitem[{Jon}83]{Jon83}
Vaughan~F.R. {Jones}.
\newblock {Braid groups, Hecke algebras and type $II\sb 1$ factors.}
\newblock {Geometric methods in operator algebras, Proc. US-Jap. Semin.,
  Kyoto/Jap. 1983.}, 1983.

\bibitem[Jon87]{JonHecke}
Vaughan~F.R. Jones.
\newblock {Hecke algebra representations of braid groups and link polynomials.}
\newblock {\em Ann. Math.}, 126\penalty0 (2):\penalty0 335--388, 1987.

\bibitem[J{\o}r11]{Joer}
S{\o}ren~Fuglede J{\o}rgensen.
\newblock Quantum representations of mapping class groups, progress report,
  2011.
\newblock Available at \url{http://maths.fuglede.dk}.

\bibitem[J{\o}r13]{JoerThesis}
S{\o}ren~Fuglede J{\o}rgensen.
\newblock {\em Semiclassical properties of the quantum representations of
  mapping class groups}.
\newblock PhD thesis, Aarhus University, 2013.
\newblock Available at \url{http://maths.fuglede.dk}.

\bibitem[Kan89]{KanieCFTandBG}
Yukihiro Kanie.
\newblock {C}onformal {F}ield {T}heory and the {B}raid {G}roup.
\newblock {\em Bulletin of the Faculty of Education Mie University},
  40:\penalty0 1--43, 1989.

\bibitem[Kas01]{Kas}
Yasushi Kasahara.
\newblock {An expansion of the Jones representation of genus $2$ and the
  Torelli group.}
\newblock {\em Algebr. Geom. Topol.}, 1:\penalty0 39--55, 2001.

\bibitem[{Kob}12]{Kober}
Thomas {Koberda}.
\newblock {Asymptotic linearity of the mapping class group and a homological
  version of the Nielsen-Thurston classification.}
\newblock {\em {Geom. Dedicata}}, 156:\penalty0 13--30, 2012.
\newblock \showISSN{0046-5755; 1572-9168/e}.

\bibitem[LPS13]{LPS}
Yves Laszlo, Christian Pauly, and Christoph Sorger.
\newblock On the monodromy of the {H}itchin connection.
\newblock {\em J. Geom. Phys.}, 64:\penalty0 64--78, 2013.

\bibitem[LT11]{LT}
Erwan Lanneau and Jean-Luc Thiffeault.
\newblock {On the minimum dilatation of braids on punctured discs.}
\newblock {\em Geom. Dedicata}, 152:\penalty0 165--182, 2011.

\bibitem[Mas99]{Masinf}
Gregor Masbaum.
\newblock An element of infinite order in {TQFT}-representations of mapping
  class groups.
\newblock {\em Contemp. Math}, pages 137--139, 1999.

\bibitem[Mas08]{MasOW}
Gregor Masbaum.
\newblock On representations of mapping class groups in {I}ntegral {TQFT}.
\newblock {\em Oberwolfach Reports}, 5:\penalty0 1157--1232, 2008.

\bibitem[McM]{McM}
Curtis McMullen.
\newblock Entropy on {R}iemann surfaces and {J}acobians of finite covers.
\newblock {\em Comment. Math. Helv., to appear}.

\bibitem[Pen88]{Pen}
Robert Penner.
\newblock A {C}onstruction of {P}seudo-{A}nosov {H}omeomorphisms.
\newblock {\em Transactions of the American Mathematical Society},
  310:\penalty0 179--197, 1988.

\bibitem[RSA12]{RSA}
David Ridout and Yvan Saint-Aubin.
\newblock Standard {M}odules, {I}nduction and the {S}tructure of the
  {T}emperley-{L}ieb algebra, 2012.
\newblock \url{http://arxiv.org/abs/1204.4505v2}.

\bibitem[RT90]{RT1}
Nicolai~Yu. Reshetikhin and Vladimir~G. Turaev.
\newblock Ribbon graphs and their invariants derived from quantum groups.
\newblock {\em Comm. Math. Phys}, 127:\penalty0 1--26, 1990.

\bibitem[RT91]{RT2}
Nicolai~Yu. Reshetikhin and Vladimir~G. Turaev.
\newblock Invariants of {$3$}-manifolds via link polynomials and quantum
  groups.
\newblock {\em Invent. Math.}, 103\penalty0 (3):\penalty0 547--597, 1991.
\newblock \showCODEN{INVMBH}.
\newblock \showISSN{0020-9910}.
\newblock \ifshowURL {\showURL \path|http://dx.doi.org/10.1007/BF01239527|}\fi.

\bibitem[{San}12]{San}
Ramanujan {Santharoubane}.
\newblock {Limits of the quantum $SO(3)$ representations for the one-holed
  torus.}
\newblock {\em {J. Knot Theory Ramifications}}, 21\penalty0 (11):\penalty0 13,
  2012.
\newblock \showISSN{0218-2165}.

\bibitem[Thu88]{ThAnosov}
William~P. Thurston.
\newblock On the geometry and dynamics of diffeomorphisms of surfaces.
\newblock {\em Bull. Amer. Math. Soc. (N.S.)}, 19\penalty0 (2):\penalty0
  417--431, 1988.

\bibitem[Tur10]{Tu}
Vladimir~G. Turaev.
\newblock {\em Quantum invariants of knots and 3-manifolds}, volume~18 of {\em
  de Gruyter Studies in Mathematics}.
\newblock Walter de Gruyter \& Co., Berlin, revised edition, 2010.
\newblock \showISBN{978-3-11-022183-1}.
\newblock xii+592 pp.
\newblock \ifshowURL {\showURL
  \path|http://dx.doi.org/10.1515/9783110221848|}\fi.

\bibitem[TUY89]{TUY}
Akihiro Tsuchiya, Kenji Ueno, and Yasuhiko Yamada.
\newblock Conformal field theory on universal family of stable curves with
  gauge symmetries.
\newblock In {\em Integrable systems in quantum field theory and statistical
  mechanics}, volume~19 of {\em Adv. Stud. Pure Math.}, pages 459--566.
  Academic Press, Boston, MA, 1989.

\bibitem[Uen08]{UenoBook}
Kenji Ueno.
\newblock {\em Conformal field theory with gauge symmetry}, volume~24 of {\em
  Fields Institute Monographs}.
\newblock American Mathematical Society, Providence, RI, 2008.
\newblock \showISBN{978-0-8218-4088-7}.
\newblock viii+168 pp.

\bibitem[{Wan}10]{Wan}
Zhenghan {Wang}.
\newblock {\em {Tolological quantum computation.}}
\newblock Providence, RI: American Mathematical Society (AMS), 2010.
\newblock \showISBN{978-0-8218-4930-9/pbk}.
\newblock xiii + 115 pp.

\bibitem[{Wen}88]{Wen}
Hans {Wenzl}.
\newblock {Hecke algebras of type $A\sb n$ and subfactors.}
\newblock {\em {Invent. Math.}}, 92\penalty0 (2):\penalty0 349--383, 1988.
\newblock \showISSN{0020-9910; 1432-1297/e}.

\bibitem[Wit89]{WitJones}
Edward Witten.
\newblock Quantum field theory and the {J}ones polynomial.
\newblock {\em Comm. Math. Phys.}, 121:\penalty0 351--399, 1989.

\end{thebibliography}
\end{document}